\numberwithin{equation}{section}
\numberwithin{figure}{section}
\newtheorem{theorem}{Theorem}[section]
\newtheorem*{theorem*}{Theorem}
\newtheorem{corollary}[theorem]{Corollary}
\newtheorem{lemma}[theorem]{Lemma}
\newtheorem{proposition}[theorem]{Proposition}
\theoremstyle{definition}
\newtheorem{definition}{Definition}
\theoremstyle{remark}
\newtheorem{remark}{Remark}[section]
\begin{document}
\newcommand{\SO}{\operatorname{SO}} 

\newcommand{\SL}{\operatorname{SL}} 

\newcommand{\PSL}{\operatorname{PSL}}

\newcommand{\GL}{\operatorname{GL}} 

\newcommand{\supp}{\operatorname{supp}} 

\newcommand{\diag}{\operatorname{diag}} 

\newcommand{\BC}{\operatorname{BC}} 

\newcommand{\Lie}{\operatorname{Lie}} 

\newcommand{\euc}{\operatorname{euc}} 

\newcommand{\thin}{\operatorname{thin}} 

\newcommand{\thick}{\operatorname{thick}} 

\newcommand{\cusp}{\operatorname{cusp}} 

\newcommand{\core}{\operatorname{core}} 

\newcommand{\rank}{\operatorname{rank}} 

\newcommand{\inj}{\operatorname{inj}} 

\newcommand{\hull}{\operatorname{hull}} 

\newcommand{\Isom}{\operatorname{Isom}} 

\newcommand{\htop}{\operatorname{top}} 

\newcommand{\BM}{\operatorname{BM}} 

\newcommand{\tr}{\operatorname{tr}} 

\newcommand{\Id}{\operatorname{Id}} 

\newcommand{\Per}{\operatorname{Per}} 
\newcommand\numberthis{\addtocounter{equation}{1}\tag{\theequation}}

\title[Excursions to the cusps]{Excursions to the cusps for geometrically finite hyperbolic orbifolds, and equidistribution of closed geodesics in regular covers}
\author{Ron Mor}
\begin{abstract}
We give a finitary criterion for the convergence of measures on non-elementary
geometrically finite hyperbolic orbifolds to the unique measure of
maximal entropy. We give an entropy criterion controlling escape of mass to the cusps of the orbifold. Using this criterion we prove new results on the distribution of collections of closed geodesics on such orbifold, and as a corollary we prove equidistribution of closed geodesics up to a certain length in amenable regular covers of geometrically finite orbifolds.
\end{abstract}
\thanks{This work was supported by ERC 2020 grant HomDyn (grant no.~833423)}
\address{The Einstein Institute of Mathematics\\
	Edmond J. Safra Campus, Givat Ram, The Hebrew University of Jerusalem
	Jerusalem, 91904, Israel}
\maketitle

\section{Introduction}

In this paper we consider the geodesic flow on
the unit tangent bundle of non-elementary geometrically finite hyperbolic
orbifolds, as well as the frame flow on the frame bundle of such orbifolds. Specifically, we study conditions guaranteeing that a given sequence of measures which are invariant under the frame flow on such orbifolds converges to the unique
invariant measure of maximal entropy.

In \cite{einsiedler2011distribution} a criterion was introduced for
a specific hyperbolic surface, namely the modular surface. This criterion was of interest in part since it was used in the same paper to give a new proof of a special case of a theorem of Duke \cite{duke1988}, along the lines of partial results towards this theorem by Linnik and Skubenko.

In this paper we extend the ergodic theoretic results of \cite{einsiedler2011distribution}, namely a finitary form of the uniqueness of measure of maximal entropy for the geodesic flow on the modular surface, to the frame flow on non-elementary
geometrically finite hyperbolic orbifolds of any dimension. This flow is isomorphic to the action of a 1-parameter diagonal group $A=a_\bullet$ on
$\Gamma\backslash G$ for $G=\SO^{+}(1,\mathtt{d})$ and $\Gamma<G$ a
non-elementary geometrically finite subgroup.

Let $\delta=\delta(\Gamma)$ denote the critical exponent of $\Gamma$, and let $X=\Gamma\backslash\mathbb{H}^{\mathtt{d}}$. In Theorem~\ref{theorem1}, $B_{r}^{H}$ denotes a ball in the group $H$ of radius $r$ centered at the identity, $M$ the centralizer of $A$ in a maximal compact subgroup $K<G$, \ $N^+$ the unstable horospherical subgroup, and $N^{-}$ the stable horospherical subgroup (see~\S\ref{flow section}).
In these notations, for $y\in \Gamma\backslash G$ and $x\in yB_{1}^{N^{+}}B_{1}^{MA}B_{\lambda}^{N^{-}}$, where $\lambda>0$ is small, the backward trajectories of $x$ and $y$ under the frame flow stay at distance $\ll 1$ from each other for at least $\gg\log \frac{1}{\lambda}$ time, i.e.\ $d(xa_{-t},ya_{-t})\ll 1$ for all $0\leq t \ll \log\frac{1}{\lambda}$.

\begin{theorem}\label{theorem1}Let $(\mu_{i})_{i\in\mathbb{N}}$
be a sequence of $A$-invariant probability measures on $\Gamma\backslash G$
for a non-elementary geometrically finite subgroup $\Gamma<G$. Suppose there is a sequence $\lambda_{i}\to0^{+}$ and a constant $\alpha>0$ such that for all
sufficiently small $\epsilon_{0}>0$ the ``heights'' $H_{i}=\lambda_{i}^{\epsilon_{0}}$ satisfy:
\begin{enumerate}
\item \label{assumption_1_thm_1}$\mu_{i}(\mathcal{F}\cusp_{H_{i}}(X))\to0$ as $i\to\infty$
\item\label{assumption_2_thm_1}
\begin{align*}
\mu_{i}\times\mu_{i}(\{&(x,y)\in \mathcal{F}\core_{H_{i}}(X)\times \mathcal{F}\core_{H_{i}}(X):\:x\in yB_{1}^{N^{+}}B_{1}^{MA}B_{\lambda_{i}}^{N^{-}}\})\\&\ll_{\epsilon_0} \lambda_{i}^{\delta-\alpha\epsilon_{0}}
\end{align*}
\end{enumerate}
Then 
\begin{enumerate}
    \item \label{thm1 item1} the sequence of measures $(\mu_{i})_{i\in\mathbb{N}}$ is \emph{tight}, i.e.\ for any $\epsilon>0$ there is a compact set $X_\epsilon \subset \Gamma\backslash G$ such that $\mu _ i (X _ \epsilon) > 1 - \epsilon$ holds for all $i$ large enough.
    
    \item \label{thm1 item2} any weak-$\star$ limit of a subsequence of $(\mu_{i})_{i\in\mathbb{N}}$ has entropy $\delta(\Gamma)$.
    
    \item \label{thm1 item3} Suppose $\Gamma$ is Zariski-dense in $G$. Then $\mu_i\to m_{\BM}^{\mathcal{F}}$ in the weak-$\star$ topology, for $m_{\BM}^{\mathcal{F}}$ the Bowen-Margulis measure on $\Gamma\backslash G$ (see \S\ref{PS BM Section}).
\end{enumerate} 
\end{theorem}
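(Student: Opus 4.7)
The overall strategy is to follow the framework introduced by Einsiedler, Lindenstrauss, Michel and Venkatesh in the modular surface setting, with the appropriate modifications for the geometrically finite, higher-dimensional frame-flow case. I would prove the three conclusions in order, with (\ref{thm1 item3}) following from (\ref{thm1 item1}) and (\ref{thm1 item2}) by the variational principle and the uniqueness of the measure of maximal entropy.

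For conclusion (\ref{thm1 item1}), assumption (\ref{assumption_1_thm_1}) already bounds the $\mu_i$-measure of the shrinking cusp regions $\mathcal{F}\cusp_{H_i}(X)$, but tightness demands a uniform estimate at a \emph{fixed} height. I would use $A$-invariance together with the horospherical structure of cusps in a geometrically finite hyperbolic orbifold: a frame at height $H$ in a cusp flows backward under $a_{-t}$ to a frame at height $\asymp e^{-t}H$, so if $\mu_i$ assigned too much mass to a fixed-height cusp region, $A$-invariance would produce mass in strictly deeper regions contradicting the bound in (\ref{assumption_1_thm_1}) for $i$ large. Since geometric finiteness supplies only finitely many cusps, this yields a uniform height $H_0$ with $\mu_i(\mathcal{F}\cusp_{H_0}(X))$ small for all large $i$, hence tightness.

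For conclusion (\ref{thm1 item2}), the key observation is that the set $yB_{1}^{N^{+}}B_{1}^{MA}B_{\lambda_i}^{N^{-}}$ is essentially a Bowen $(1, T_i)$-ball for the backward frame flow with $T_i \asymp \log(1/\lambda_i)$, since the small $B^{N^-}_{\lambda_i}$ factor expands to size $\asymp 1$ after backward time $T_i$ while the $B_1^{N^+}B_1^{MA}$ factors contract. Assumption (\ref{assumption_2_thm_1}) then asserts that the average over $y\sim\mu_i$ of the $\mu_i$-mass of the Bowen ball around $y$ is $\ll \lambda_i^{\delta-\alpha\epsilon_0} = e^{-(\delta-\alpha\epsilon_0)T_i}$, so by Chebyshev, on a set of $\mu_i$-mass $1-o(1)$ the Bowen ball around $y$ has mass $\leq e^{-(\delta - O(\epsilon_0))T_i}$. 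A Brin--Katok / Katok-style argument then forces any weak-$\star$ subsequential limit $\mu$ (which is a probability measure by the tightness just established) to satisfy $h_\mu(a_1) \geq \delta - O(\epsilon_0)$; letting $\epsilon_0 \to 0$ yields $h_\mu(a_1) \geq \delta$. The matching upper bound $h_\mu(a_1) \leq \delta$ is the topological entropy estimate of Otal--Peign\'e, extended to the frame bundle.

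Conclusion (\ref{thm1 item3}) then follows quickly: any subsequential weak-$\star$ limit $\mu$ is a probability measure by (\ref{thm1 item1}), has entropy exactly $\delta$ by (\ref{thm1 item2}), and therefore equals $m_{\BM}^{\mathcal{F}}$ by uniqueness of the measure of maximal entropy on the frame bundle under the Zariski-density hypothesis on $\Gamma$ (Winter, building on Flaminio--Spatzier and Babillot). I expect the main technical obstacle to be in (\ref{thm1 item2}): transferring the finite-scale pairwise estimate on $\mu_i\times\mu_i$ to a pointwise entropy lower bound for the limit. The indicator of a Bowen ball is discontinuous, so one must use continuous approximations and invoke (\ref{thm1 item1}) to rule out mass piling up at the discontinuities toward the cusps; moreover, since the Bowen ball is thick in the $N^+MA$ directions, the entropy partition must be chosen to respect this product structure without cutting through atoms of the limit measure.
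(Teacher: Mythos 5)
Your high-level plan is consistent with the paper's strategy (follow Einsiedler--Lindenstrauss--Michel--Venkatesh, interpret the $N^+MA N^-$ set as a Bowen ball, derive an entropy lower bound, invoke uniqueness of the MME via Winter), but there is a genuine gap where the paper's actual novel technical content lives, and your sketch of conclusion~(\ref{thm1 item1}) is not correct as stated.

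\textbf{The tightness argument does not work as you describe.} You claim that $A$-invariance plus the ``horospherical structure'' of cusps lets you promote assumption~(\ref{assumption_1_thm_1}) to a uniform bound at a fixed height. But the relation ``a frame at height $H$ flows backward to height $\asymp e^{-t}H$'' is false for frames going up in the cusp (for which flowing \emph{backward} exits the cusp), and in any case a trajectory entering and exiting a cusp spends only a bounded stretch of time there, so $A$-invariance alone produces no contradiction. Assumption~(\ref{assumption_1_thm_1}) controls mass only at the shrinking heights $H_i$ and by itself cannot give tightness; one needs assumption~(\ref{assumption_2_thm_1}) as well. The paper's tightness statement (Lemma~\ref{lemma_1}) is quantitatively sharp --- $\mu(\Omega_{\mathrm{nc}}^\epsilon)\geq 1-\kappa_\epsilon$ --- and its proof is the hardest step in the paper. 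It proceeds by defining the ``bad'' set $X_{\kappa,i}$ of points whose trajectory spends more than a $\kappa$-fraction of $[-N_i',N_i']$ in $\Omega_{\mathrm{c}}^\epsilon$, covering $X_{\kappa,i}$ using Lemma~\ref{number of Z} and the Main Lemma (Lemma~\ref{main_lemma}) by a controlled number of Bowen $(N_i,\eta)$-balls, and then bounding $\mu_i(X_{\kappa,i})$ using assumption~(\ref{assumption_2_thm_1}) via a Cauchy--Schwarz type inequality. Your proposal omits the Main Lemma entirely, and this lemma --- counting how many Bowen balls are needed to cover the set of trajectories with a prescribed pattern of cusp excursions, with a rank-dependent improvement over the trivial $e^{(\mathtt{d}-1)N}$ count --- is precisely the new ingredient needed to generalize the modular-surface argument to geometrically finite orbifolds with cusps of varying rank.

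\textbf{For conclusion~(\ref{thm1 item2}), you correctly identify the hard step but leave it open.} Your reading of assumption~(\ref{assumption_2_thm_1}) as an $L^2$ estimate on Bowen-ball masses, and the Chebyshev reduction, are both in the spirit of the paper. But ``a Brin--Katok / Katok-style argument then forces $h_\mu(a_1)\geq\delta-O(\epsilon_0)$'' is exactly the step that requires care: assumption~(\ref{assumption_2_thm_1}) bounds the $\mu_i$-mass of Bowen balls only when the endpoints lie in $\mathcal{F}\core_{H_i}(X)$, and passing this information to the entropy of the limit $\mu$ requires a finite partition $\mathscr{P}$ whose refined atoms in $\mathscr{P}_{N_i}$ are contained in few Bowen balls while their complement carries little $\mu_i$-mass (Lemma~\ref{lemma_2}), together with a $\mu$-continuity choice of $\mathscr{P}$ so that $H_{\mu_i}(\mathscr{P}_{N_0})\to H_\mu(\mathscr{P}_{N_0})$ via Portmanteau. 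The bound on the complement's measure in Lemma~\ref{lemma_2} again feeds back into the quantitative tightness from Lemma~\ref{lemma_1}, so the two conclusions are more intertwined than your outline suggests. You name this as the ``main technical obstacle'' without resolving it; the resolution \emph{is} the content of Lemmas~\ref{lemma_2} and~\ref{main_lemma}.

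Conclusion~(\ref{thm1 item3}) is handled correctly, and matches the paper's use of Winter's ergodicity result to reduce uniqueness of the MME on $\Gamma\backslash G$ to the known uniqueness on $\Gamma\backslash G/M$.
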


The same methods can be used to control the amount of mass an invariant measure gives to the cusps of such orbifold. The amount of mass can be quantified in relation to the entropy of the invariant measure, as well as the rank of the cusps of the hyperbolic orbifold. Here $T_a$ stands for the time-one-map of the flow $A=a_{\bullet}$.

\begin{theorem}\label{theorem2}
Let $\mu$ be an $A$-invariant probability
measure on $\Gamma\backslash G$, for a non-elementary geometrically
finite subgroup $\Gamma<G$.
Then
\begin{equation*}
h_{\mu}(T_{a})\leq\delta-\underset{i=1}{\overset{\mathtt{d}-1}{\sum}}\frac{2\delta-i}{2}\mu(\mathcal{F}\cusp_{\epsilon}^{i}(X))+\frac{2\log(|\log\epsilon|)}{|\log\epsilon|}
\end{equation*}
for all small enough $0<\epsilon<\epsilon_{\mathtt{d}}$.
\end{theorem}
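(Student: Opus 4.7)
The plan is to bound $h_{\mu}(T_{a})$ via a Shannon--McMillan style argument using a partition $\mathcal{P}_{\epsilon}$ of $\Gamma\backslash G$ whose atoms have diameter $\asymp \epsilon$ in the thick part and are adapted to the cusp geometry in the thin parts. At time scale $n = \lfloor |\log\epsilon|\rfloor$ one has
\[
h_{\mu}(T_{a}) \le \frac{1}{n}\, H_{\mu}(\mathcal{P}_{\epsilon}^{n}) + O(1/n),
\qquad \mathcal{P}_{\epsilon}^{n}=\bigvee_{k=0}^{n-1} T_{a}^{-k}\mathcal{P}_{\epsilon},
\]
so it suffices to estimate $\frac{1}{n}H_{\mu}(\mathcal{P}_{\epsilon}^{n})$ region by region, the $O(1/n)$ being absorbed into the final error.

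I would classify the atoms of $\mathcal{P}_{\epsilon}^{n}$ according to the fractions of time the orbit $(xT_{a}^{k})_{k=0}^{n-1}$ spends in the thick part and in each rank-$i$ cusp $\mathcal{F}\cusp_{\epsilon}^{i}(X)$. Two counting estimates drive the proof. In the thick part the frame flow is uniformly hyperbolic with topological entropy $\delta$, so $(n,\epsilon)$-separated orbits remaining there number $\ll e^{n\delta}$. In a cusp of rank $i$, orbits trapped there are controlled by the corresponding parabolic stabilizer $\cong \mathbb{Z}^{i}$, which by a classical theorem of Beardon has Poincar\'e critical exponent $i/2$: concretely, at depth $H$ the horospherical cross-section parallel to the parabolic is an $i$-torus of diameter $\asymp e^{-H}$, a Bowen ball in those directions wraps around at time $\asymp H+\log(1/\epsilon)$, and the distinguishable trajectories correspond to parabolic lattice points at hyperbolic distance $\le n$, of which there are $\asymp e^{ni/2}$.

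Combining the two counts, absorbing an $O(\log n / n) = O(\log|\log\epsilon|/|\log\epsilon|)$ combinatorial factor coming from the sum over the possible sojourn profiles, and using $T_{a}$-invariance of $\mu$ to replace time averages by $\mu(\mathcal{F}\cusp_{\epsilon}^{i}(X))$, one arrives at
\[
\frac{1}{n}H_{\mu}(\mathcal{P}_{\epsilon}^{n}) \le \delta\Bigl(1-\sum_{i=1}^{\mathtt{d}-1}\mu(\mathcal{F}\cusp_{\epsilon}^{i}(X))\Bigr) + \sum_{i=1}^{\mathtt{d}-1}\frac{i}{2}\,\mu(\mathcal{F}\cusp_{\epsilon}^{i}(X)) + \frac{2\log|\log\epsilon|}{|\log\epsilon|},
\]
which rearranges to the asserted inequality with coefficient $\delta-i/2=(2\delta-i)/2$.

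The main obstacle is the sharp cusp-counting bound $\ll e^{ni/2}$, uniform over all depths $H$: the horospherical wrap-around time depends on $H$ and must combine cleanly with the scale $\epsilon$ and the time horizon $n$, including the boundary effects where a Bowen ball only partially lies in the cusp. Fortunately this is precisely the same geometry of cusps in geometrically finite subgroups of $\SO^{+}(1,\mathtt{d})$ that underlies hypothesis~\ref{assumption_2_thm_1} of Theorem~\ref{theorem1}, so the parabolic Poincar\'e series estimate developed for that theorem should furnish the needed uniform bound here.
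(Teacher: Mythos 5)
Your outline captures the core geometric mechanism of the paper's proof — classify orbits by their sojourn profile, count $\ll e^{n\delta}$ in the thick part and $\ll e^{ni/2}$ per rank-$i$ cusp sojourn, and absorb the combinatorial cost of enumerating profiles into a $\log|\log\epsilon|/|\log\epsilon|$ error. The coefficient bookkeeping $\delta(1-\sum\beta_i)+\sum(i/2)\beta_i=\delta-\sum\tfrac{2\delta-i}{2}\beta_i$ is also right. However there are two substantive gaps where your route, as written, does not close.

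First, the entropy bound. Writing $h_{\mu}(T_a)\le\frac{1}{n}H_{\mu}(\mathcal{P}_\epsilon^n)+O(1/n)$ at a single fixed time $n=\lfloor|\log\epsilon|\rfloor$ requires that $\mathcal{P}_\epsilon$ be a generating partition (mod $\mu$); otherwise $\frac{1}{n}H_\mu(\mathcal{P}_\epsilon^n)$ only bounds $h_\mu(T_a,\mathcal{P}_\epsilon)$, not $h_\mu(T_a)$, and no $O(1/n)$ correction can fix the discrepancy. On a noncompact space a partition by $\epsilon$-diameter atoms need not be generating, and proving it is an extra step. The paper sidesteps this entirely by using a Katok-style covering lemma (Lemma~\ref{entropy_bound_by_Bowen_balls}): entropy is bounded by $\liminf_{\epsilon_0\to0}\liminf_{N\to\infty}\tfrac{1}{2N}\log\mathrm{BC}_\rho(N,\epsilon_0)$, the Bowen-ball covering number of sets of measure $1-\epsilon_0$, which makes no appeal to generating partitions and does not require fixing $N$ at any scale related to $\epsilon$. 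Your proposal would need to either prove the generating property or switch to this type of bound.

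Second, the cusp-sojourn count. The estimate $\ll e^{ni/2}$ is exactly the delicate heart of the matter and you explicitly defer it. Two things make it nontrivial: the number of distinguishable orbits that enter and leave a rank-$i$ cusp in time $L$ must be bounded by the number of $\mathbb{Z}^i$-lattice points in a ball of Euclidean radius $\asymp e^{L/2}$ (this is a direct lattice count, not ``a classical theorem of Beardon'' — Beardon enters only to ensure $2\delta-i>0$); and to justify using the parabolic stabilizer alone, one needs precise invariance of the cusp region $T_{\tilde\epsilon_\mathtt{d}}(\Gamma_\xi)$ to force the relevant return element $\gamma_1^{-1}\gamma_2$ to lie in $\Gamma_\xi$, plus the block structure (Lemma~\ref{size_of_cusps}) that makes consecutive cusp sojourns at scale $\epsilon$ be $\gtrsim|\log\epsilon|$ apart. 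These are the content of Lemma~\ref{main step} and Lemma~\ref{main_lemma}, and the ``parabolic Poincar\'e series estimate developed for Theorem~\ref{theorem1}'' is not a separate black box — it is this same argument. Finally, your averaging step differs from the paper's: the paper finds, via Birkhoff and Egorov, a set of measure $1-\epsilon_0$ on which the cusp time-fractions are uniformly close to $\mu(\mathcal{F}\cusp^i_\epsilon(X))$; you propose to pass to $\mu$-averages directly by invariance. That does work here because the exponent bound is linear in the time-fractions $\beta_i$, and it is arguably a slight simplification over the paper's route — but as phrased (``using $T_a$-invariance to replace time averages'') it should be made explicit that it is the linearity that licenses the exchange, and that the exceptional deep-cusp orbits that never re-enter the thick part in $n$ steps must still be controlled (the paper does this via the auxiliary set $Y$).
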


\begin{remark}
By a result of Beardon \cite{beardon1968exponent} (cf.\ \cite[Corollary 2.2]{mcmullen1999hausdorff}), the critical exponent of a non-elementary discrete subgroup $\Gamma < G$ is greater than $\frac{r_{\mathrm{max}}}{2}$, where $r_{\mathrm{max}}$ is the maximal rank of a parabolic fixed point of $\Gamma$, hence
the term $\frac{2\delta-i}{2}$ is positive, for all $1\leq i\leq\mathtt{d}-1$
for which $\cusp_{\epsilon}^{i}(X)\not=\emptyset$. Therefore the
correction term in the RHS is negative, that is to say that the higher
the measure of the cusp the smaller is the upper bound on the entropy.
Cusps of higher rank cut down the entropy by a lesser amount.
\end{remark}

Theorem \ref{theorem2} gives, quantitatively, the relation between entropy and escape of mass. A natural question is what happens in the case all of the mass escapes in a weakly-$\star$ converging sequence. Formally, we define the \textbf{entropy in the cusp} as \[h_\infty(T_a)=\underset{\{\nu_{n}\rightharpoonup 0\}}{\sup}\underset{n\to\infty}{\limsup}\:h_{\nu_n}(T_a).\]

An immediate corollary of Theorem~\ref{theorem2} is an upper bound on $h_\infty(T_a)$. 

\begin{corollary}\label{entropy in the cusp cor}
Let $\Gamma<G$ be a non-elementary geometrically finite subgroup, for which the maximal rank of a cusp is $r_{\mathrm{max}}$. Then $h_\infty(T_a)\leq \frac{r_{\mathrm{max}}}{2}$.
\end{corollary}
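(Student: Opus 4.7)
The plan is to deduce the corollary directly from Theorem~\ref{theorem2} by a two-step limiting argument: first letting $n\to\infty$ using weak-$\star$ convergence to zero, then letting the scale $\epsilon$ shrink to zero.

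First I would fix a sequence of $A$-invariant probability measures $(\nu_n)$ with $\nu_n \rightharpoonup 0$ and a small $\epsilon<\epsilon_{\mathtt{d}}$, and apply Theorem~\ref{theorem2} to each $\nu_n$. The cusp sum on the right-hand side involves only indices $i$ that are actual cusp ranks of $\Gamma$, which are at most $r_{\max}$. By Beardon's inequality (recalled in the remark following Theorem~\ref{theorem2}) one has $\delta > r_{\max}/2$, so every coefficient $\tfrac{2\delta - i}{2}$ appearing in the sum is bounded below by the positive constant $\delta - \tfrac{r_{\max}}{2}$. Replacing each coefficient by this common lower bound and combining the resulting terms yields
\[
h_{\nu_n}(T_a) \leq \delta - \Bigl(\delta - \tfrac{r_{\max}}{2}\Bigr)\, \nu_n\bigl(\mathcal{F}\cusp_{\epsilon}(X)\bigr) + \frac{2\log|\log\epsilon|}{|\log\epsilon|}.
\]

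Next I would exploit geometric finiteness: for fixed $\epsilon>0$ the $\epsilon$-thick frame core is compact, so the hypothesis $\nu_n \rightharpoonup 0$ forces $\nu_n(\mathcal{F}\core_\epsilon(X))\to 0$, and hence $\nu_n(\mathcal{F}\cusp_\epsilon(X))\to 1$ as $n\to\infty$. Taking $\limsup_{n\to\infty}$ in the displayed bound therefore gives
\[
\limsup_{n\to\infty} h_{\nu_n}(T_a) \leq \tfrac{r_{\max}}{2} + \frac{2\log|\log\epsilon|}{|\log\epsilon|}.
\]
Because this holds for every sufficiently small $\epsilon$, letting $\epsilon\to 0^{+}$ kills the error term, and since the resulting inequality holds for an arbitrary sequence $\nu_n \rightharpoonup 0$, taking the supremum over such sequences produces $h_\infty(T_a) \leq r_{\max}/2$.

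I do not foresee any serious obstacle: the only inputs beyond Theorem~\ref{theorem2} itself are Beardon's strict lower bound on $\delta$ (to guarantee positivity of the retained coefficient) and the compactness of $\epsilon$-thick cores in geometrically finite orbifolds (to turn weak-$\star$ convergence to zero into full mass in the cusps). The essence of the argument is simply that the coefficient $\tfrac{2\delta - i}{2}$ in the cusp correction is minimized at $i = r_{\max}$, so escape of mass into the deepest cusps saturates the entropy loss.
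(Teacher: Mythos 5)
Your proof is correct and is precisely the deduction the paper has in mind: the paper states the corollary is ``immediate'' from Theorem~\ref{theorem2} and gives no separate argument, and your two-step passage (bound the cusp sum from below by the worst coefficient $\delta - r_{\max}/2$ times $\nu_n(\mathcal{F}\cusp_\epsilon(X))$, use compactness of the thick core plus support on $\Omega_{\mathcal{F}}$ to send that mass to $1$, then let $\epsilon\to 0^+$) is exactly how one fills in that gap.
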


This upper bound (but not Theorem~\ref{theorem2}) also follows from the results of Riquelme and Velozo \cite{riquelme2017escape} (the proof there is rather different from what we give here). Using an appropriate closing (or shadowing) lemma it is not hard to see that the bound in Corollary~\ref{entropy in the cusp cor} is sharp, i.e.\ $h_\infty(T_a)=\frac{r_{\mathrm{max}}}{2}$, e.g. along the lines of Kadyrov's proof in \cite{kadyrov2012positive}.
To avoid having to set up the relevant closing lemma we omit the proof, but note that the fact that the bound in Corollary~\ref{entropy in the cusp cor} is sharp is shown in \cite{riquelme2017escape}.

The above results have partial overlap with some results that were recently proved by other authors. In particular, similar results for the geodesic flow on the modular surface were proved by Einsiedler, Lindenstrauss, Michel and Venkatesh in \cite{einsiedler2011distribution}. Einsiedler, Kadyrov and Pohl generalized these results to diagonal actions on spaces $\Gamma\backslash G$ where $G$ is a connected semisimple real Lie group of rank 1 with finite center, and $\Gamma$ is a lattice \cite{einsiedler2015escape}. Finally, Iommi, Riquelme and Velozo (in two papers with different sets of coauthors) considered entropy in the cusp for geometrically finite Riemannian manifolds with pinched negative sectional curvature and uniformly bounded derivatives of the sectional curvature \cite{iommi2015entropy,riquelme2017escape}. This latter setting is substantially more general than ours, though in the constant curvature case Theorem~\ref{theorem2} gives more information. Perhaps more importantly, our methods differ from those of \cite{iommi2015entropy,riquelme2017escape}, and give finitary versions of the above qualitative results. This allows us to apply the entropy results on invariant measures obtained as weak-$\star$ limits of certain measures of interest, before going to the limit, in the spirit of the results of \cite{einsiedler2011distribution}. 

Several such applications are given below. In particular, Theorem~\ref{theorem1} implies that on any non-elementary geometrically finite orbifold, large enough sets of closed geodesics must equidistribute. To be more precise, let $\Per_{\Gamma}(T)$ be the set of all periodic orbits of the geodesic flow in $\Gamma\backslash G/M$ of length at most $T$; we will at times implicitly identify between this set and the set of closed geodesics in $\Gamma\backslash \mathbb H^{\mathtt{d}}$ with the same length restriction.
\begin{theorem}\label{criterion for closed geodesics}
Let $\Gamma<G$ be a non-elementary geometrically finite subgroup.
Let $\psi(T)\subset\Per_{\Gamma}(T)$ be some subset, and let $\mu_{T}$ be the natural $A$-invariant probability measure on $\psi(T)$ (see \S\ref{proof of criterion Section}). Assume that there are sequences $T_i\underset{i\to\infty}{\to}\infty$ and $\alpha_{i}\underset{i\to\infty}{\to}0$, such that $|\psi(T_{i})|>e^{(\delta-\alpha_i)T_{i}}$ for all $i$. Then the sequence $(\mu_{T_i})_{i\in\mathbb{N}}$ converges to $m_{\BM}$ in the weak-$\star$ topology.
\end{theorem}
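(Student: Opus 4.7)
The plan is to apply Theorem~\ref{theorem1} to the sequence $(\mu_{T_i})$ with an auxiliary sequence $\lambda_i \to 0^+$ chosen in terms of $T_i$ and $\alpha_i$. Once the two hypotheses of Theorem~\ref{theorem1} are verified, parts (\ref{thm1 item1}) and (\ref{thm1 item2}) yield tightness of $(\mu_{T_i})$ and show that every weak-$\star$ subsequential limit is an $A$-invariant probability measure of entropy $\delta(\Gamma)$. In the setting of a non-elementary geometrically finite hyperbolic orbifold the unique such measure for the geodesic flow on $\Gamma\backslash G/M$ is $m_{\BM}$, so every subsequential limit of $(\mu_{T_i})$ coincides with $m_{\BM}$ and the convergence follows. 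Concretely, I would set $\lambda_i := e^{-\beta_i T_i}$ with $\beta_i \to 0^+$ chosen so that $\beta_i T_i \to \infty$ (ensuring $\lambda_i \to 0$) while $\alpha_i/\beta_i \to 0$ (so the counting factor $e^{\alpha_i T_i}$ is absorbed into a vanishing power of $1/\lambda_i$).

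The heart of the argument is the verification of assumption~(\ref{assumption_2_thm_1}). Writing $Z_i := \sum_{\gamma \in \psi(T_i)} \ell(\gamma)$, so that $\mu_{T_i}$ is the sum over $\gamma \in \psi(T_i)$ of the arclength measure on $\gamma$ normalized by $Z_i$, the left-hand side of~(\ref{assumption_2_thm_1}) expands as
\begin{equation*}
\frac{1}{Z_i^2} \sum_{\gamma_1,\gamma_2 \in \psi(T_i)} \int_{\gamma_1}\!\int_{\gamma_2} \mathbf{1}\bigl\{x \in y B_1^{N^+} B_1^{MA} B_{\lambda_i}^{N^-}\bigr\}\, \mathbf{1}_{\mathcal{F}\core_{H_i}(X)^2}(x,y)\, ds(y)\, ds(x).
\end{equation*}
By the shadowing statement recalled just before Theorem~\ref{theorem1}, any pair $(x,y)$ in the indicated set with both points in the core has backward $A$-orbits remaining within distance $O(1)$ for time $\gg \log(1/\lambda_i)$. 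A standard Patterson--Sullivan orbit-counting argument in the core then bounds the total arclength contributed by length-$\leq T_i$ closed geodesics to any Bowen $(\log(1/\lambda_i), O(1))$-ball centered in the core by $\ll \lambda_i^\delta e^{\delta T_i}$. Integrating the inner integral over $y \in \gamma_2$ and summing over $\gamma_2$ bounds the nested sum by $\ll Z_i \lambda_i^\delta e^{\delta T_i}$. Dividing by $Z_i^2$ and using $Z_i \gtrsim e^{(\delta-\alpha_i)T_i}$ (which follows from the hypothesis $|\psi(T_i)| > e^{(\delta - \alpha_i) T_i}$ and the fact that $\Gamma$ has a positive shortest-geodesic length) produces a bound of $\ll \lambda_i^\delta e^{\alpha_i T_i}$, which by the choice of $\lambda_i$ is $\ll_{\epsilon_0} \lambda_i^{\delta - \alpha\epsilon_0}$ for any prescribed $\alpha, \epsilon_0 > 0$ and all $i$ large enough.

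Assumption~(\ref{assumption_1_thm_1}) is handled by a simpler variant of the same idea. Sullivan's shadow lemma for parabolic shadows yields, for each cusp of rank $r$, a bound of order $H^{2\delta - r}$ (up to polylogarithmic factors) on the Patterson--Sullivan mass of shadows of height-$H$ horoballs, and hence a bound $\ll H_i^{2\delta - r_{\max}} e^{\delta T_i}$ (again up to polylogarithmic factors) on the total arclength that length-$\leq T_i$ closed geodesics devote to $\mathcal{F}\cusp_{H_i}(X)$. Dividing by $Z_i \gtrsim e^{(\delta - \alpha_i)T_i}$, and using Beardon's inequality $2\delta > r_{\max}$ together with $H_i = \lambda_i^{\epsilon_0} = e^{-\epsilon_0 \beta_i T_i}$ and $\alpha_i/\beta_i \to 0$, forces $\mu_{T_i}(\mathcal{F}\cusp_{H_i}(X)) \to 0$.

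The main obstacle is the Bowen-ball counting estimate used in the second step: one needs a uniform upper bound, valid up to the cusp boundary of the core in a geometrically finite orbifold, on the total arclength contributed by length-$\leq T_i$ closed geodesics to a thin Bowen tube. In the convex cocompact case this is a direct consequence of Sullivan's work and the mixing of the Bowen--Margulis measure; in the geometrically finite case the cusp restrictions built into both hypotheses of Theorem~\ref{theorem1} let us confine attention to the core, where $m_{\BM}$ has the clean product-like structure of Patterson--Sullivan times Lebesgue and the counting reduces essentially to the cocompact situation.
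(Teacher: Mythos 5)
Your high-level plan — choose a sequence $\lambda_i$, verify both hypotheses of Theorem~\ref{theorem1}, invoke uniqueness of the measure of maximal entropy for the geodesic flow — is exactly the structure of the paper's proof. But the two verifications you propose diverge from the paper in an important way, and there is a genuine gap that you yourself flag but do not resolve.

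The central difficulty is the one you call ``the main obstacle'': bounding the contribution of length-$\leq T_i$ periodic orbits to a single thin Bowen tube. You propose to treat this as an arclength-counting problem and invoke ``a standard Patterson--Sullivan orbit-counting argument'' or, in the cocompact case, mixing. This is not actually standard. Mixing and equidistribution give asymptotics for a \emph{fixed} test set as $T\to\infty$; here the Bowen tube shrinks with $i$, so you would need an effective (quantitative) equidistribution input with explicit error terms that remain useful down to scale $e^{-\delta\log(1/\lambda_i)}$, and then the shadow-lemma step for the cusp would need an analogous uniform conversion from Patterson--Sullivan mass to geodesic arclength. Neither is provided, and the hand-off ``in the geometrically finite case the cusp restrictions let us confine attention to the core, where the counting reduces essentially to the cocompact situation'' is a gap rather than an argument.

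The paper closes this gap with a rigidity statement rather than an equidistribution one. Proposition~\ref{distance_between_geodesics} shows that two periodic $Ma_\bullet$-orbits of comparable lengths that meet inside a sufficiently deep Bowen ball in the thick part must be the \emph{same} orbit, and Corollary~\ref{geodesics_in_bowen} converts this into: a Bowen $(N,\rho)$-ball contained in $\Omega_{\mathrm{nc}}^{\tau}$ meets at most $\ll_{\rho}\tau^{-(\mathtt{d}^2-\mathtt{d}+2)}$ distinct periodic orbits of lengths in $(N-1,N]$. Since $\mu_T$ gives each orbit mass $\frac{1}{|\psi(T)|}$ (note: the paper's normalization is equal weight per orbit, not the arclength-weighted measure you write as $\mu_{T_i}=\frac{1}{Z_i}\sum\ell(\gamma)\,\mu_{\gamma}$; this is a small but real discrepancy with the statement you are proving), the mass of the Bowen ball is simply this count divided by $|\psi(T_i)|$, with no equidistribution needed. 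This is what makes assumption~(\ref{assumption_2_thm_1}) verifiable, and via Proposition~\ref{counting closed geodesics} (which again counts \emph{orbits}, not arclength) it also gives assumption~(\ref{assumption_1_thm_1}).

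Crucially, this rigidity argument forces the paper's specific choice $\lambda_i=e^{-\lceil T_i\rceil}$, so that the Bowen ball has depth $N_i\approx T_i$, comparable to the orbit lengths; Proposition~\ref{distance_between_geodesics} requires $N\geq\max\{\lceil t_1\rceil,\lceil t_2\rceil\}$. Your proposal $\lambda_i=e^{-\beta_i T_i}$ with $\beta_i\to 0$ makes the Bowen ball far too shallow for this mechanism, which is precisely why you are driven to the unproven arclength-counting step. So the gap is not incidental: it is forced by your choice of $\lambda_i$, and the natural way to close it is to switch to $\lambda_i\approx e^{-T_i}$ and prove a statement of the form of Proposition~\ref{distance_between_geodesics} and Corollary~\ref{geodesics_in_bowen}.
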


We use Theorem \ref{criterion for closed geodesics} to draw some results regarding the equidistribution of closed geodesics.
We show that the number of periodic $a_\bullet$-orbits up to a certain length, on which the integral of some bounded continuous function differs noticeably from the integral over the whole orbifold, is exponentially smaller (by a difference of $h$ in the exponent) than the number of all periodic $a_\bullet$-orbits with the same length restriction. 

\begin{theorem}\label{theorem3}Let $\Gamma<G$ be a non-elementary
geometrically finite subgroup. Fix $f\in C_{b}(\Gamma\backslash G/M)$ and
$\epsilon>0$. Then there is a constant $h>0$, such that for all large
enough $T>0$
\begin{equation*}
\#\left\{l\in\Per_{\Gamma}(T):\:\left|\int_{l}fd\mu_{l}-\int_{\Gamma\backslash G/M}f\,dm_{\BM}\right|>\epsilon\right\}\leq e^{(\delta-h)T}
\end{equation*}
where $\mu_{l}$ is the natural probability measure on the periodic $a_\bullet$-orbit $l$.
\end{theorem}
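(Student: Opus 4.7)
The plan is a proof by contradiction using Theorem~\ref{criterion for closed geodesics} as the main tool. Suppose the conclusion fails for some fixed $f\in C_b(\Gamma\backslash G/M)$ and $\epsilon>0$; then for every $h>0$ the asserted upper bound is violated at arbitrarily large $T$, so I can extract sequences $T_i\to\infty$ and $h_i\to 0^{+}$ with the property that
\[
\psi(T_i):=\left\{l\in\Per_{\Gamma}(T_i):\left|\int_{l}f\,d\mu_{l}-\int f\,dm_{\BM}\right|>\epsilon\right\}
\]
has cardinality strictly greater than $e^{(\delta-h_i)T_i}$ for all $i$.

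Writing $G(l):=\int_{l}f\,d\mu_{l}-\int f\,dm_{\BM}$, I split $\psi(T_i)=\psi^{+}(T_i)\sqcup\psi^{-}(T_i)$ according to whether $G(l)>\epsilon$ or $G(l)<-\epsilon$. By pigeonhole one of the two pieces has cardinality at least $\tfrac{1}{2}|\psi(T_i)|$, and after passing to a subsequence I may assume that piece to be $\psi^{+}$ for every $i$. Setting $\alpha_i:=h_i+\tfrac{\log 2}{T_i}$, which still tends to $0$, this gives $|\psi^{+}(T_i)|>e^{(\delta-\alpha_i)T_i}$, so Theorem~\ref{criterion for closed geodesics} applies to the subfamily $\psi^{+}$ and produces natural probability measures $\mu^{+}_{T_i}$ converging to $m_{\BM}$ in the weak-$\star$ topology.

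On the other hand, the natural measure $\mu^{+}_{T_i}$ is by construction a convex combination $\sum_{l\in\psi^{+}(T_i)}w_{l}\mu_{l}$ with weights $w_{l}\geq 0$ summing to one, hence
\[
\int f\,d\mu^{+}_{T_i}-\int f\,dm_{\BM}=\sum_{l\in\psi^{+}(T_i)}w_{l}\,G(l)>\epsilon
\]
for every $i$. Since the limit $m_{\BM}$ is a probability measure on a locally compact space and the $\mu^{+}_{T_i}$ are themselves probability measures, their weak-$\star$ convergence to $m_{\BM}$ automatically forces tightness of the sequence, and a standard cut-off argument then upgrades weak-$\star$ convergence to convergence of integrals against any bounded continuous function; in particular $\int f\,d\mu^{+}_{T_i}\to\int f\,dm_{\BM}$, contradicting the previous display.

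The only genuine obstacle is the sign issue: orbits in $\psi(T_i)$ may contribute to $\int f\,d\mu_{T_i}$ with opposite signs, so an unsigned lower bound on $|\psi(T_i)|$ does not by itself survive the averaging that defines the natural measure. Partitioning $\psi(T_i)$ into its two sign classes and passing to a subsequence circumvents this, and after that the statement reduces directly to the equidistribution criterion of Theorem~\ref{criterion for closed geodesics} together with the standard promotion of weak-$\star$ convergence of tight probability measures to convergence against $C_b$ test functions.
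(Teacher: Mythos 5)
Your proof is correct and takes essentially the same approach as the paper: proof by contradiction, split the exceptional set by the sign of $\int_l f\,d\mu_l - \int f\,dm_{\BM}$, apply Theorem~\ref{criterion for closed geodesics} to the larger sign class, and derive a contradiction from the fact that the averaged integral stays bounded away from $\int f\,dm_{\BM}$ by $\epsilon$. The small extra care you take --- folding the $\tfrac12$ from the pigeonhole into the exponent via $\alpha_i = h_i + \tfrac{\log 2}{T_i}$, and spelling out that tightness of the probability measures (already guaranteed by item~(1) of Theorem~\ref{theorem1}) upgrades weak-$\star$ convergence to convergence against $C_b$ test functions --- is welcome detail that the paper leaves implicit.
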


We are also able to use Theorem \ref{criterion for closed geodesics} to extend some of our results to regular covers of geometrically finite orbifolds. These, of course, are not geometrically finite unless the covering group is finite in which case the claims are trivial.

Consider the following well-known equidistribution theorem regarding closed geodesics on geometrically finite orbifolds \cite{margulis2004some,roblin2003ergodicite,paulin2015equilibrium}:
\begin{theorem*}
Let $\Gamma<G$ be a non-elementary geometrically finite subgroup. Let $\mu_T$ be the natural $A$-invariant probability measure on $\Per_{\Gamma}(T)$. Then the net $\{\mu_T\}_{T>0}$ converges to $m_{\BM}$ in the weak-$\star$ topology, as $T\to\infty$.
\end{theorem*}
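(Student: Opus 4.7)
The plan is to deduce this well-known theorem as an immediate consequence of Theorem~\ref{criterion for closed geodesics} applied with $\psi(T)=\Per_\Gamma(T)$. With this choice the measure $\mu_T$ appearing in Theorem~\ref{criterion for closed geodesics} is precisely the measure in the present statement, so all that must be verified is the hypothesis of the criterion: for some sequence $T_i\to\infty$ there exist $\alpha_i\to 0$ with
\[
|\Per_\Gamma(T_i)|>e^{(\delta-\alpha_i)T_i}.
\]
Once this lower bound is in hand for every sequence $T_i\to\infty$, Theorem~\ref{criterion for closed geodesics} yields $\mu_{T_i}\to m_{\BM}$ along every such sequence.

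The required counting lower bound is classical in the geometrically finite setting. The sharp asymptotic $|\Per_\Gamma(T)|\sim e^{\delta T}/(\delta T)$ was established by Roblin via the ergodic theory of horospherical foliations and Patterson--Sullivan densities, and in particular gives $\log|\Per_\Gamma(T)|/T\to\delta$, which is more than enough to produce a suitable $\alpha_i\to 0$ for any $T_i\to\infty$. For our purposes only the coarse inequality $|\Per_\Gamma(T)|\geq e^{(\delta-o(1))T}$ is needed; this weaker statement can also be obtained by a direct Bowen-type separation argument exploiting the fact that the Bowen--Margulis measure is the measure of maximal entropy, with entropy $\delta$, together with some care to handle the cusps.

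With the counting lower bound in hand, Theorem~\ref{criterion for closed geodesics} provides $\mu_{T_i}\to m_{\BM}$ in the weak-$\star$ topology along every sequence $T_i\to\infty$. Since the space of Borel probability measures on $\Gamma\backslash G/M$ with the weak-$\star$ topology is metrizable, net convergence is equivalent to sequential convergence, and we conclude $\mu_T\to m_{\BM}$ as $T\to\infty$.

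The only non-trivial input is therefore the counting lower bound itself; the main subtlety is ensuring it is used independently of the equidistribution conclusion we are trying to prove (to avoid circularity). Roblin's proof indeed supplies such an independent lower bound, so invoking his counting result inside Theorem~\ref{criterion for closed geodesics} is legitimate and completes the argument.
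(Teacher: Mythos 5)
Your proof is correct, and it is worth noting that the paper itself does not prove this statement at all: it is quoted as a known result, with citations to Margulis, Roblin, and Paulin--Pollicott--Schapira, as motivation for Theorem~\ref{amenable covers}. That said, your route of deducing it from Theorem~\ref{criterion for closed geodesics} together with a lower bound on $|\Per_\Gamma(T)|$ is precisely what the paper implicitly does in its more general Theorem~\ref{amenable covers}: taking $\Gamma=\Gamma_0$ there (so the covering group is trivial, hence amenable, and $\phi(T)=\phi'(T)=\Per_{\Gamma_0}(T)$), the proof of Theorem~\ref{amenable covers} reduces to exactly your argument. So your proposal is, in effect, the specialization of the paper's own derivation.

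Your worry about circularity is the right thing to flag, and your resolution is sound. The input you need is only the coarse exponential growth rate $\liminf_{T\to\infty}\frac{1}{T}\log|\Per_\Gamma(T)|\geq\delta(\Gamma)$, not the full asymptotic $|\Per_\Gamma(T)|\sim e^{\delta T}/(\delta T)$; the paper itself quotes this lower bound (in fact for any non-elementary discrete $\Gamma$, without geometric finiteness) as Theorem~\ref{PPS1}, and its proofs in the literature do not pass through equidistribution of closed geodesics, so there is no circularity. Two small remarks: first, you should be a bit careful when you say "once this lower bound is in hand for \emph{every} sequence $T_i\to\infty$" --- what you actually use is that, by the known growth rate, for \emph{every} sequence $T_i\to\infty$ one can choose $\alpha_i\to 0$ with $|\Per_\Gamma(T_i)|>e^{(\delta-\alpha_i)T_i}$, and then apply the criterion to each such sequence separately; your phrasing is compatible with this but could be read as assuming a single uniform sequence suffices. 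Second, the step passing from sequential convergence to net convergence is fine: $\Gamma\backslash G/M$ is Polish, so the weak-$\star$ topology on probability measures is metrizable, and a function of a real parameter converges as $T\to\infty$ iff it converges along every sequence $T_i\to\infty$.

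Finally, as a comparison with the literature route: the classical proofs (Roblin, Paulin--Pollicott--Schapira) go through mixing of the Bowen--Margulis measure and Patterson--Sullivan densities, yielding the sharp counting asymptotic and equidistribution simultaneously. Your argument buys equidistribution from the weaker growth-rate input and the entropy machinery of Theorem~\ref{criterion for closed geodesics}; this is more elementary in that sense, but of course Theorem~\ref{criterion for closed geodesics} itself rests on the uniqueness of the measure of maximal entropy (Theorem~\ref{maximal_entropy}), so the total amount of background invoked is comparable.
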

Using our entropy estimates we are able to extend this and prove an equidistribution result for closed geodesics on regular covers with amenable covering groups. In the proof we use the deep fact proved in  \cite{paulin2015equilibrium,roblin2005un,sharp2007critical,brooks1985the} that the critical exponent is not changed under the taking of a subgroup with an amenable quotient.
\begin{theorem}\label{amenable covers}
Let $\Gamma_0<G$ be a geometrically finite group, and let $\Gamma\triangleleft\Gamma_0$ be a non-elementary normal subgroup such that the covering group $\Gamma\backslash\Gamma_0$ is amenable.
Let $\phi(T)\subset\Per_{\Gamma_0}(T)$ be the set of periodic $a_\bullet$-orbits in $\Gamma_0\backslash G/M$ of length at most $T$, which remain periodic and of the same length in $\Gamma\backslash G/M$. Let $\nu_{T}$ be the natural $A$-invariant probability measure on $\phi(T)$.
Then the net $\{\nu_{T}\}_{T>0}$ converges to $m_{\BM}$ in the weak-$\star$ topology, as $T\to\infty$.
\end{theorem}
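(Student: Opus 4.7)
The plan is to apply Theorem~\ref{criterion for closed geodesics} to the geometrically finite group $\Gamma_0$, with $\psi(T)=\phi(T)$ and $\mu_T=\nu_T$. First I would reinterpret $\phi(T)$ algebraically: a primitive periodic $a_\bullet$-orbit in $\Gamma_0\backslash G/M$ of length $\ell$ corresponds to a primitive $\Gamma_0$-conjugacy class of hyperbolic elements $[\gamma]_{\Gamma_0}$ with translation length $\ell$, and the lift of this orbit to $\Gamma\backslash G/M$ is closed of the same length precisely when $\gamma\in\Gamma$ (otherwise it closes with length a proper integer multiple of $\ell$, or fails to close at all). Thus $\phi(T)$ is in bijection with the primitive (in $\Gamma_0$) conjugacy classes of hyperbolic $\gamma\in\Gamma$ with $\ell(\gamma)\le T$.

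The crux is to verify the hypothesis $|\phi(T_i)|\ge e^{(\delta-\alpha_i)T_i}$ along sequences $T_i\to\infty$, $\alpha_i\to0$, where $\delta=\delta(\Gamma_0)$. Using the deep fact $\delta(\Gamma)=\delta(\Gamma_0)$ recalled in the statement, the Poincar\'e series of $\Gamma$ diverges at each $s<\delta$, so $|\{\gamma\in\Gamma:d(o,\gamma o)\le R\}|\ge e^{(\delta-o(1))R}$ along some sequence $R\to\infty$, for any base point $o$. To pass from orbit-point counts to counts of primitive $\Gamma_0$-conjugacy classes, I would appeal to standard negative-curvature arguments: translation length and displacement differ by a constant on elements whose axes pass near $o$; centralizers of hyperbolic elements in $\Gamma_0$ are virtually cyclic, so each conjugacy class contributes at most polynomially many orbit points in the ball of radius $R$; and the contribution of non-hyperbolic or non-primitive elements is exponentially smaller. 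This yields the required lower bound along some sequence $T_i$, and Theorem~\ref{criterion for closed geodesics} then delivers $\nu_{T_i}\to m_{\BM}$. Running the same argument along arbitrary subsequences $T_i\to\infty$, together with the tightness furnished by Theorem~\ref{theorem1}(\ref{thm1 item1}), upgrades subsequential convergence to convergence of the full net $\{\nu_T\}_{T>0}$.

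The main obstacle is this counting step: transferring the qualitative identity $\delta(\Gamma)=\delta(\Gamma_0)$ into a robust exponential lower bound on primitive $\Gamma_0$-conjugacy classes of hyperbolic elements of $\Gamma$. The difficulty is that $\Gamma$ itself need not be geometrically finite, so the Bowen--Margulis measure on $\Gamma\backslash G$ may be infinite and Roblin's prime geodesic theorem does not apply to $\Gamma$ directly. Either one invokes finer counting asymptotics for amenable covers available in the literature (Dougall--Sharp, Jaerisch--Pohl--Stadlbauer, and related work), or one argues directly via a shadow-lemma construction, using almost-invariant vectors in $\ell^2(\Gamma_0/\Gamma)$ supplied by amenability to promote the orbit-point estimate above to the desired conjugacy-class count.
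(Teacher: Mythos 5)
Your broad strategy matches the paper's: reduce to Theorem~\ref{criterion for closed geodesics} by showing $|\phi(T)|$ grows at rate $e^{(\delta(\Gamma_0)-o(1))T}$, using the deep fact $\delta(\Gamma)=\delta(\Gamma_0)$ for amenable covers. You also correctly identify the real obstacle: $\Gamma$ itself is typically not geometrically finite, so one cannot invoke a prime geodesic theorem for $\Gamma$, and passing from orbital counting $|\{\gamma\in\Gamma:\,d(o,\gamma o)\le R\}|$ to a count of primitive $\Gamma_0$-conjugacy classes of loxodromic elements of $\Gamma$ is exactly where a naive argument breaks down. But your proposal stops there, sketching two possible escape routes (importing results of Dougall--Sharp type, or a bespoke shadow-lemma plus almost-invariant-vector construction) without carrying either out, and it is not clear either would be shorter or would yield the lower bound for \emph{all} $T$ (you only extract it along a subsequence from divergence of the Poincar\'e series, which is not enough to conclude convergence of the full net $\{\nu_T\}$).

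The paper sidesteps the orbit-count-to-conjugacy-class conversion entirely. The key input you are missing is \cite[Theorem~4.7]{paulin2015equilibrium} (stated in the paper as Theorem~\ref{PPS1}): for \emph{any} non-elementary discrete $\Gamma$, and any relatively compact open window $W$ meeting the non-wandering set, the number of closed geodesics of length $\le T$ in $\Gamma\backslash\mathbb{H}^{\mathtt{d}}$ that meet $W$ has exponential growth rate exactly $\delta(\Gamma)$. This is already a statement about closed geodesics, not about orbit points, and it requires neither geometric finiteness nor finiteness of the Bowen--Margulis measure. The paper then proves a simple multiplicity lemma (Lemma~\ref{multiplicity in the cover}): by choosing $W$ small enough that $\pi_{\Gamma_0}$ is injective on a slightly larger lift, at most $\ll T$ distinct $\Gamma$-periodic orbits meeting $W$ project to the same $\Gamma_0$-periodic orbit. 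Dividing by this multiplicity gives $\liminf\frac1T\log|\phi'(T)|\ge\delta(\Gamma)$, where $\phi'(T)$ is the set of $\Gamma_0$-periodic orbits that lift to $\Gamma$-periodic orbits of length $\le T$. Finally, the discrepancy $\phi'(T)\setminus\phi(T)$ consists of orbits that ``unfold'' by an integer factor $\ge 2$ and so have $\Gamma_0$-length $\le T/2$; subtracting $|\Per_{\Gamma_0}(T/2)|$ (controlled by Theorem~\ref{number_of_periodic_orbits}) and using $\delta(\Gamma)=\delta(\Gamma_0)$ gives the lower bound on $|\phi(T)|$ for all $T$, hence convergence of the full net. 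In short: your outline identifies the right bottleneck, but the argument you sketch to cross it is genuinely incomplete, and the paper's use of the windowed prime orbit theorem plus a short multiplicity lemma is the missing idea.
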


As a corollary, we prove equidistribution in the covering space as well. A similar result was given by Oh and Pan \cite{hee2019local}, using different methods, for $\mathbb{Z}^{n}$-covers of convex cocompact rank one locally symmetric spaces. We emphasize that the key feature in our result is that $\Gamma_0\backslash \mathbb{H}^{\mathtt{d}}$ may have cusps.
\begin{corollary}\label{equidistribution in the covering}
Let $\Gamma_0<G$ be a geometrically finite group, and let $\Gamma\triangleleft\Gamma_0$ be a non-elementary normal subgroup such that the covering group $\Gamma\backslash\Gamma_0$ is amenable.
Let $N_T$ be the number of $\Gamma\backslash\Gamma_0$-equivalence classes of $\Per_{\Gamma}(T)$.
Then for all $f\in C(\Gamma\backslash G/M)$ such that $$\sup_{x\in\Gamma\backslash G/M} \sum_{\tau\in\Gamma\backslash\Gamma_0} \left|f(\tau x)\right|<\infty$$ the following holds:
$$\lim_{T\to\infty}\frac{1}{N_T}\sum_{l\in\Per_{\Gamma}(T)}\int_{l} fd\mu_l=\int_{\Gamma_0\backslash G/M}\sum_{\tau\in\Gamma\backslash\Gamma_0} f(\tau\Gamma v)dm_{\BM}(\Gamma_{0}v)$$
\end{corollary}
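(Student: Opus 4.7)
The strategy is to transfer the averaging over $\Per_\Gamma(T)$ on the cover to one over closed geodesics in the base $\Gamma_0\backslash G/M$, using the summability of $f$, and then to apply Theorem~\ref{amenable covers}. Set
\[
F(y) := \sum_{\tau\in\Gamma\backslash\Gamma_0} f(\tau y),
\]
which by hypothesis is bounded and $(\Gamma\backslash\Gamma_0)$-invariant, hence descends to a bounded continuous function on $\Gamma_0\backslash G/M$.

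First I would perform a combinatorial rearrangement. Each $l\in\Per_\Gamma(T)$ projects to a closed orbit $\bar l$ in $\Gamma_0\backslash G/M$, and the preimage of $\bar l$ in $\Per_\Gamma(T)$ is exactly one $\Gamma\backslash\Gamma_0$-equivalence class. If $k_{\bar l}\in\{1,2,\dots,\infty\}$ denotes the order of the holonomy of $\bar l$ in the covering group, this class is nonempty precisely when $k_{\bar l}T_{\bar l}\leq T$. Using $\int_{\tau l} f\,d\mu_{\tau l}=\int_l f(\tau\,\cdot)\,d\mu_l$ and exchanging sum and integral on each orbit (justified by summability), one obtains
\[
\sum_{l\in\Per_\Gamma(T)}\int_l f\,d\mu_l \;=\; \sum_{[\bar l]\text{ counted by }N_T}\int_{\bar l}F\,d\mu_{\bar l}.
\]

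Next I would isolate the dominant contribution. The classes with $k_{\bar l}=1$ are exactly $\phi(T)$; those with $k_{\bar l}\geq 2$ satisfy $T_{\bar l}\leq T/2$, so their total number is at most $\sum_{k\geq 2}|\Per_{\Gamma_0}(T/k)|\ll Te^{\delta T/2}$ by the prime geodesic theorem for $\Gamma_0$. On the other hand $|\phi(T)|\geq e^{(\delta-o(1))T}$, which is inherent to the derivation of Theorem~\ref{amenable covers} from Theorem~\ref{criterion for closed geodesics} together with the preservation of the critical exponent under amenable covers. Thus $N_T=|\phi(T)|(1+o(1))$, and the contribution of the $k_{\bar l}\geq 2$ classes to the normalized average is at most $\|F\|_\infty\cdot Te^{\delta T/2}/N_T\to 0$.

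What remains is precisely $\nu_T(F)$, and Theorem~\ref{amenable covers} gives $\nu_T\to m_{\BM}$ weak-$\star$ on $\Gamma_0\backslash G/M$. Since $F\in C_b$ and both measures are probabilities, tightness of $\{\nu_T\}$---inherent to the proof of Theorem~\ref{amenable covers} via Theorem~\ref{theorem1}(\ref{thm1 item1})---upgrades this to $\nu_T(F)\to m_{\BM}(F)$, which is the claimed right-hand side. The main obstacle is the careful bookkeeping of the first step: matching $\Gamma\backslash\Gamma_0$-orbits in $\Per_\Gamma(T)$ with closed geodesics on the base weighted by holonomy order, and handling possible nontrivial stabilizers; the remainder is a soft application of Theorem~\ref{amenable covers}.
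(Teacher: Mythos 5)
Your approach is essentially the same as the paper's, with the main difference being a shortcut: the paper invokes Remark~\ref{remark on covers}, which extends Theorem~\ref{amenable covers} to give weak-$\star$ convergence of $\nu'_T$ (the average over $\phi'(T)$, which has exactly $N_T$ elements) directly, so that the identity $\int f\,d\mu_T=\int\tilde f\,d\nu'_T$ finishes the argument at once. You instead split off the classes with $k_{\bar l}\geq 2$, bound them by the prime geodesic theorem, and apply Theorem~\ref{amenable covers} for $\nu_T$ on $\phi(T)$ — which costs an extra exponential estimate but buys nothing further. Your observation that tightness (Theorem~\ref{theorem1}(\ref{thm1 item1})) is what justifies testing against merely bounded continuous $F$ rather than $C_c$ is a correct and welcome point that the paper leaves implicit.

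One small imprecision worth fixing: your first display
\[
\sum_{l\in\Per_\Gamma(T)}\int_l f\,d\mu_l = \sum_{[\bar l]}\int_{\bar l}F\,d\mu_{\bar l}
\]
is not exact when $\bar l$ has nontrivial holonomy. If $S_{\bar l}\leq\Gamma\backslash\Gamma_0$ is the (cyclic, order-$k_{\bar l}$) stabilizer of a representative $l_0\in\Per_\Gamma(T)$, then the class $[\bar l]$ consists of $|(\Gamma\backslash\Gamma_0)/S_{\bar l}|$ distinct orbits, while $F$ sums over all of $\Gamma\backslash\Gamma_0$; using the $a_{T_{\bar l}}$-invariance of $\mu_{l_0}$ one finds $\sum_{l\in[\bar l]}\int_l f\,d\mu_l=\frac{1}{k_{\bar l}}\int_{\bar l}F\,d\mu_{\bar l}$. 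You gesture at this in your closing paragraph (``handling possible nontrivial stabilizers''), and your dominance argument in fact resolves it: the discrepancy is supported on the $k_{\bar l}\geq 2$ classes, which you already show contribute $O(Te^{\delta T/2}/N_T)\to 0$. So the proof goes through, but the equality sign in that display should really be $\sum_{[\bar l]}\frac{1}{k_{\bar l}}\int_{\bar l}F\,d\mu_{\bar l}$, with the coefficients $1/k_{\bar l}$ absorbed into the negligible error.

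Finally, after removing the $k_{\bar l}\geq 2$ terms what remains is $\frac{|\phi(T)|}{N_T}\,\nu_T(F)$, not quite $\nu_T(F)$; since $|\phi(T)|/N_T\to 1$ by your own estimate, the limit is unaffected, but it is worth writing the prefactor.
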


\begin{remark}
The assumption on $f$ in Corollary \ref{equidistribution in the covering} is satisfied by any $f\in C_{c}(\Gamma\backslash G/M)$.
\end{remark}

Next, we prove the following result, indicating that not all of the mass escapes for the collection of closed geodesics in (not necessarily amenable) regular covers of geometrically finite orbifolds, if the critical exponent of the cover is large enough.
\begin{theorem}\label{not all of the mass escapes in covers}
Let $\Gamma_0<G$ be a geometrically finite group, and let $\Gamma\triangleleft\Gamma_0$ be a non-elementary normal subgroup. 
Assume $\delta(\Gamma)>\frac{r_{\max}(\Gamma_0)}{2}$. Let $f\in C(\Gamma\backslash G/M)$ satisfy $f\geq 0$ and $$\sum_{\tau\in\Gamma\backslash\Gamma_0}f(\tau x)>0$$ for all $x\in\Gamma\backslash G/M$.
Then
$$\liminf_{T\to\infty}\frac{1}{N_T}\sum_{l\in\Per_{\Gamma}(T)}\int_{l} fd\mu_l>0$$
\end{theorem}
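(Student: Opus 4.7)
The plan is to rewrite the expression as an integral on the base orbifold and then show, via a counting estimate fed into Theorems~\ref{theorem1} and \ref{theorem2}, that no weak-$\star$ subsequential limit of the resulting measures loses all of its mass to the cusps. First I would use the same summation rearrangement as in the proof of Corollary~\ref{equidistribution in the covering}: grouping the closed orbits $l\in\Per_\Gamma(T)$ into $\Gamma\backslash\Gamma_0$-orbits and identifying each such orbit with $\Gamma\backslash\Gamma_0$ acting on a base lift $l_0$, one obtains
\[
\frac{1}{N_T}\sum_{l\in\Per_\Gamma(T)}\int_l f\,d\mu_l=\int_{\Gamma_0\backslash G/M}\bar f\,d\nu_T,
\]
where $\bar f(\Gamma_0 v)=\sum_{\tau\in\Gamma\backslash\Gamma_0}f(\tau\Gamma v)$ and $\nu_T$ is the natural $A$-invariant probability measure on $\phi(T)$ from Theorem~\ref{amenable covers}. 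The hypotheses on $f$ make $\bar f$ a continuous, strictly positive function on $\Gamma_0\backslash G/M$, so it suffices to show that every weak-$\star$ subsequential limit $\nu_\infty$ of $(\nu_T)$, viewed as a subprobability measure on $\Gamma_0\backslash G/M$, satisfies $\nu_\infty(\Gamma_0\backslash G/M)\geq c_0$ for a fixed $c_0>0$.

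The next step is to establish the counting bound $N_T=|\phi(T)|\geq e^{(\delta(\Gamma)-o(1))T}$ as $T\to\infty$. Since any $\Gamma$-conjugacy class of loxodromic elements of translation length at most $T$ produces a $\Gamma_0$-conjugacy class in $\phi(T)$, this follows from the standard growth estimate on primitive $\Gamma$-conjugacy classes (produced by divergence of the Poincar\'e series at the critical exponent of $\Gamma$), together with the observation that the fibers of the map $[\gamma]_\Gamma\mapsto[\gamma]_{\Gamma_0}$ have subexponential size in $T$. This is the same mechanism used for Theorem~\ref{amenable covers}, except that here we content ourselves with the exponent $\delta(\Gamma)$ and do not invoke the amenability identity $\delta(\Gamma)=\delta(\Gamma_0)$.

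The lower bound on $N_T$, fed into the computation already performed in the proof of Theorem~\ref{criterion for closed geodesics}, verifies assumption~(\ref{assumption_2_thm_1}) of Theorem~\ref{theorem1} for the measures $\nu_T$ on $\Gamma_0\backslash G$, but with the weaker exponent $\delta(\Gamma)$ in place of $\delta(\Gamma_0)$. Rerunning the partition/covering arguments that underlie the proofs of Theorems~\ref{theorem1} and~\ref{theorem2} in this weakened form yields, for any weak-$\star$ subsequential limit $\nu_\infty$ with $c=\nu_\infty(\Gamma_0\backslash G/M)$ and $\tilde\nu_\infty=\nu_\infty/c$ when $c>0$, the entropy decomposition
\[
c\cdot h_{\tilde\nu_\infty}(T_a)+(1-c)\cdot h_\infty(T_a)\;\geq\;\delta(\Gamma).
\]
Bounding $h_{\tilde\nu_\infty}(T_a)\leq\delta(\Gamma_0)$ via the variational principle and $h_\infty(T_a)\leq r_{\max}(\Gamma_0)/2$ via Corollary~\ref{entropy in the cusp cor}, the assumption $\delta(\Gamma)>r_{\max}(\Gamma_0)/2$ then forces
\[
c\;\geq\;\frac{\delta(\Gamma)-r_{\max}(\Gamma_0)/2}{\delta(\Gamma_0)-r_{\max}(\Gamma_0)/2}\;>\;0,
\]
which, combined with the continuity and strict positivity of $\bar f$, gives $\liminf_{T\to\infty}\int\bar f\,d\nu_T>0$ and finishes the proof. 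The main obstacle is this last step: whereas the standard entropy-versus-escape inequality is stated for sequences of measures with nontrivial measure-theoretic entropy, each $\nu_T$ is atomic on finitely many closed orbits and thus has zero entropy, so the effective entropy $\delta(\Gamma)$ on the left-hand side of the decomposition must be extracted directly from the counting bound via the covering-number arguments underlying Theorem~\ref{theorem1}, rather than from the measure-theoretic entropy of $\nu_T$ itself.
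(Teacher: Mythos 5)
Your overall plan — rewrite the average as $\int\bar f\,d\nu'_T$, establish $N_T\gg e^{(\delta(\Gamma)-o(1))T}$, and show the mass of $\nu'_T$ on the thick part stays bounded away from zero — is the right one, and your first two steps agree with the paper (modulo a small slip: the measure should be supported on $\phi'(T)$, not $\phi(T)$, since orbits in $\Gamma_0$ that unfold to orbits of different length must still be allowed). However, the last step has a genuine gap.

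Your key inequality
\[
c\cdot h_{\tilde\nu_\infty}(T_a)+(1-c)\cdot h_\infty(T_a)\;\geq\;\delta(\Gamma)
\]
is not established anywhere in the paper, and the claim that it would follow from ``rerunning the partition/covering arguments that underlie Theorems~\ref{theorem1} and~\ref{theorem2} in weakened form'' does not hold up under inspection. The issue is not only that the $\nu'_T$ have zero measure-theoretic entropy (which you flag); it is that hypothesis~(\ref{assumption_1_thm_1}) of Theorem~\ref{theorem1} itself breaks when $\delta(\Gamma)<\delta(\Gamma_0)$. That hypothesis demands $\nu'_T(\mathcal{F}\cusp_{H_i}(X))\to0$ for \emph{all} sufficiently small $\epsilon_0$, with $H_i=\lambda_i^{\epsilon_0}$. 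Running the argument of Theorem~\ref{criterion for closed geodesics} here, one only gets $\nu'_T(\mathcal{F}\cusp_{H_i}(X))\ll e^{(\delta(\Gamma_0)-c\epsilon_0-\delta(\Gamma)+o(1))T_i}$; this tends to zero only for $\epsilon_0$ above a threshold proportional to $\delta(\Gamma_0)-\delta(\Gamma)$, not for all small $\epsilon_0$. Consequently Lemma~\ref{lemma_1} (the mass-escape control) is unavailable, and the chain ``Lemma~\ref{lemma_1} $\Rightarrow$ Lemma~\ref{lemma_2} $\Rightarrow$ entropy lower bound'' that makes up the proof of Theorem~\ref{theorem1}(\ref{thm1 item2}) does not simply rerun. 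The decomposition you want would have to be rederived from scratch, and it is unclear that the covering estimates produce exactly the linear form you wrote (the dependence on escaping mass that comes out of Lemma~\ref{lemma_1} is through the choice of $\kappa$ and is not obviously linear in $(1-c)$).

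The paper's argument is substantially more elementary and bypasses all of this. It applies Proposition~\ref{counting closed geodesics} directly at a \emph{fixed} small height $\epsilon$ (not $H_i\to0$): the number of closed orbits of length $\leq T$ spending at least a fraction $1-\beta$ of their time in $\Omega_{\mathrm{c}}^\epsilon$ is $\ll e^{g(\beta,\epsilon)\lceil T\rceil}$, and $g(\beta,\epsilon)<\delta(\Gamma)$ provided $\beta,\epsilon$ are small enough — this is exactly where the hypothesis $\delta(\Gamma)>r_{\max}(\Gamma_0)/2$ enters. Since $N_T\gg e^{(\delta(\Gamma)-o(1))T}$, the proportion of ``bad'' orbits tends to zero, and every ``good'' orbit contributes at least $\beta/N_T$ to $\nu'_T(\pi_M(\Omega_{\mathrm{nc}}^\epsilon))$. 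This yields $\liminf\nu'_T(\pi_M(\Omega_{\mathrm{nc}}^\epsilon))\geq\beta>0$, from which the conclusion follows by the strict positivity and continuity of $\bar f$ on the compact set $\overline{\pi_M(\Omega_{\mathrm{nc}}^\epsilon)}$. No weak-$\star$ limit, no entropy, and no Theorem~\ref{theorem1} or~\ref{theorem2} are invoked. I would encourage you to replace the entropy-decomposition detour with this direct counting comparison: you already have all the ingredients in hand.
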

\begin{remark}
The assumption on $f$ in Theorem \ref{not all of the mass escapes in covers} is satisfied by any strictly positive $f\in C(\Gamma\backslash G/M)$.
\end{remark}
\subsection{Acknowledgements} This paper is a part of the author's PhD studies in The Hebrew University of Jerusalem. I would like to thank my advisor, Prof.\ Elon Lindenstrauss, to whom I am grateful for his guidance and support throughout this work.

\section{Preliminaries}\label{sec:preliminaries}
In this section we recall some basic facts from the theory of geometrically
finite groups, Patterson-Sullivan measures, Bowen-Margulis measures, and entropy of the geodesic flow.
Good references for the material covered in this section is Nicholls' book \cite{nicholls1989ergodic} as well as Bowditch's paper \cite{bowditch1993geometrical}.
A broader review can also be found in \cite{My_MSc}.

\subsection{Hyperbolic geometry}

Fix a natural number $\mathtt{d}\geq2$. Let $\mathbb{H}^{\mathtt{d}}$ be the $\mathtt{d}$-dimensional hyperbolic space. We use the upper half space model and identify $\mathbb{H}^{\mathtt{d}}$ with $\{ x \in \mathbb{R}^{\mathtt{d}}:\:x_{\mathtt{d}}>0 \}$
equipped with the metric $ds ^2 =\frac{\| dx \| ^2}{x_{\mathtt{d}} ^2}$.
The conformal ball model $\mathbb{B}^{\mathtt{d}}$
or the hyperboloid model $\mathcal{H}^{\mathtt{d}}$ will also be used on occasion, especially for expositional reasons.

In this work we will be interested in spaces of the form $\Gamma\backslash\mathbb{H}^{\mathtt{d}}$,
for $\Gamma<\Isom\mathbb{H}^{\mathtt{d}}$ a discrete group of isometries
of $\mathbb{H}^{\mathtt{d}}$. 
If $\Gamma$ is torsion free, $\Gamma\backslash\mathbb{H}^{\mathtt{d}}$ is a manifold of constant negative curvature; in general though it is only an orbifold.

\begin{definition}
Let $\Gamma<G$ be a discrete subgroup. The \textbf{limit set} of $\Gamma$ is the set $\Lambda(\Gamma)$ of accumulation points in the compactification $\overline{\mathbb{H}^{\mathtt{d}}}$, of any $\Gamma$-orbit $\Gamma x$ for $x\in\mathbb{H}^{\mathtt{d}}$. We call $\Gamma$ \textbf{elementary} if it has a finite limit set.
\end{definition}

The action of $\Isom\mathbb{H}^{\mathtt{d}}$ extends to an action
on the boundary of the hyperbolic $\mathtt{d}$-space using Moebius transformations of $\mathbb{B}^{\mathtt{d}}$. The isometries of the hyperbolic $\mathtt{d}$-space can be classified
into three mutually disjoint types, depending on their fixed points
in $\overline{\mathbb{H}^{\mathtt{d}}}$, as follows.

$g\in\Isom\mathbb{H}^{\mathtt{d}}$ is called:
\begin{enumerate}
\item \textbf{parabolic} if it has precisely one fixed point, which lies
on $\partial\mathbb{H}^{\mathtt{d}}$.
\item \textbf{loxodromic} (in the $\mathtt{d}=2$ case, also \textbf{hyperbolic})
if it has precisely two fixed points, which lie on $\partial\mathbb{H}^{\mathtt{d}}$.
\item \textbf{elliptic} if it has a fixed point in $\mathbb{H}^{\mathtt{d}}$. 
\end{enumerate}

For a point $x\in\partial\mathbb{H}^{\mathtt{d}}$ and a subgroup $H<\Isom\mathbb{H}^{\mathtt{d}}$, let $H_{x}$ stand for the stabilizer of $x$ in $H$.
\label{page of bounded parabolic}
A parabolic fixed point $\xi\in\partial\mathbb{H}^{\mathtt{d}}$ of
a discrete subgroup $\Gamma<\Isom\mathbb{H}^{\mathtt{d}}$, i.e.\ a point which is fixed by a parabolic element of $\Gamma$, is called
\textbf{bounded} if $$\Gamma_{\xi}\backslash\big(\Lambda(\Gamma)\smallsetminus\{\xi\}\big)$$
is compact. In particular, if $\xi=\infty$ is a parabolic fixed point, then it is bounded if and only if $$\sup_{x\in\Lambda(\Gamma)\smallsetminus\{\xi\}}d_{\euc}(x,L_{0})<\infty$$ where $L_{0}$ is some (or every) minimal $\Gamma_{\xi}$-invariant affine
subspace of $\partial\mathbb{H}^{\mathtt{d}}\smallsetminus\{\xi\}$.
In the geometrically finite case, all
parabolic fixed points are bounded (\cite[Lemma 4.6]{bowditch1993geometrical}).

Recall that the \textbf{rank} of a parabolic fixed point $\xi$ is defined by the rank of a maximal free abelian finite-index subgroup of the stabilizer $\Gamma_{\xi}$. It is denoted by $\rank(\xi)$. A key description of the rank is given as follows \cite[Section 2]{bowditch1993geometrical} (see also \cite[Theorem 3.4]{apanasov1983discrete}).
By conjugating $\Gamma$, assume $\xi=\infty$. Then there is a free abelian normal subgroup $\Gamma_{\infty}^{\ast}\triangleleft \Gamma_{\infty}$ of finite index, whose rank we have denoted by $\rank(\infty)$, and a non-empty $\Gamma$-invariant affine subspace $L\subset\mathbb{R}^{\mathtt{d}-1}\subset\partial\mathbb{H}^{\mathtt{d}}$ on which $\Gamma_{\infty}^{\ast}$ acts co-compactly by translations. This description may be used to give an equivalent definition of the rank of a parabolic fixed point, as the dimension of such affine subspace \cite{nicholls1989ergodic}.

\subsection{The thin-thick decomposition}\label{thin_thick_section}

We proceed to describe the thin-thick decomposition. It involves maximal parabolic subgroups, which are precisely the stabilizers of parabolic fixed points, and maximal loxodromic subgroups, which are precisely the stabilizers of loxodromic axes \cite{bowditch1993geometrical}.

Given $x\in\mathbb{H}^{\mathtt{d}}$, $\epsilon>0$ and any subgroup $H\leq\Gamma$, let
\begin{equation*}
H_{\epsilon}(x)=\langle\{\gamma\in H:\:d(x,\gamma x)\leq\epsilon\}\rangle
\end{equation*}
and
\begin{equation*}
T_{\epsilon}(H)=\{x\in\mathbb{H}^{\mathtt{d}}:\:|H_{\epsilon}(x)|=\infty\}.
\end{equation*}

In particular, one can take $H=\Gamma$ and then the set $T_{\epsilon}(\Gamma)$ contains $T_{\epsilon}(H)$ for any $H<\Gamma$. 
The \textbf{$\boldsymbol{\epsilon}$}-\textbf{thin
}part of $X=\Gamma\backslash\mathbb{H}^{\mathtt{d}}$ is defined to be $\thin_{\epsilon}(X)=\Gamma\backslash T_{\epsilon}(\Gamma$).
The following theorem (which we quote without proof) implies that the $\epsilon$-thin part of $X$ is, topologically, a disjoint union of its connected components,
each of the form $H\backslash T_{\epsilon}(H)$ for $H<\Gamma$ maximal
parabolic or maximal loxodromic. By that we mean that $H\backslash T_{\epsilon}(H)$
is embedded in $\thin_{\epsilon}(X)$. $H\backslash T_{\epsilon}(H)$
is called a \textbf{Margulis cusp} if $H$ is parabolic, and a \textbf{Margulis
tube} if $H$ is loxodromic.

\begin{theorem}[{\cite[Proposition 3.3.3]{bowditch1993geometrical}}]\label{thin_part}For
a discrete subgroup $\Gamma<\Isom\mathbb{H}^{\mathtt{d}}$ and for
$0<\epsilon<\epsilon_{\mathtt{d}}$ (where $\epsilon_{\mathtt{d}}$ is called the \textbf{Margulis constant}),
the set $\:T_{\epsilon}(\Gamma)$ is a disjoint union of the sets
$T_{\epsilon}(H)$, as $H$ ranges over all maximal parabolic and
maximal loxodromic subgroups of $\Gamma$. Moreover, if $H_{1},H_{2}$
are two distinct such subgroups, then
\begin{equation*}
d(T_{\epsilon}(H_{1}),T_{\epsilon}(H_{2}))\geq\frac{\epsilon_{\mathtt{d}}-\epsilon}{2}.
\end{equation*}
\end{theorem}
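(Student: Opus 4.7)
The plan is to reduce the statement to the Margulis (Kazhdan--Margulis) lemma, which asserts the existence of a constant $\epsilon_{\mathtt{d}}>0$, depending only on the dimension, such that for every discrete $\Gamma<\Isom\mathbb{H}^{\mathtt{d}}$ and every $x\in\mathbb{H}^{\mathtt{d}}$ the group $\Gamma_{\epsilon_{\mathtt{d}}}(x)$ is virtually abelian. In the hyperbolic setting a discrete infinite virtually abelian isometry subgroup is necessarily elementary, fixing either a single point of $\partial\mathbb{H}^{\mathtt{d}}$ (parabolic case) or a pair of such points (loxodromic case); consequently it is contained in a unique maximal parabolic or maximal loxodromic subgroup of $\Gamma$. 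I would quote this as a black box, citing Kazhdan--Margulis/Zassenhaus.

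Given $0<\epsilon<\epsilon_{\mathtt{d}}$ and $x\in T_{\epsilon}(\Gamma)$, the group $\Gamma_{\epsilon}(x)$ is an infinite subgroup of $\Gamma_{\epsilon_{\mathtt{d}}}(x)$, hence infinite elementary, and therefore has a unique maximal elementary extension $H<\Gamma$ of the stated type. Every generator of $\Gamma_{\epsilon}(x)$ lies in $H$, while every $\epsilon$-short element of $H$ is an $\epsilon$-short element of $\Gamma$; hence $H_{\epsilon}(x)=\Gamma_{\epsilon}(x)$ is infinite and $x\in T_{\epsilon}(H)$. Uniqueness of $H$ supplies the disjointness: if $x$ also lay in $T_{\epsilon}(H')$ for a distinct maximal parabolic/loxodromic $H'$, the infinite elementary group $\Gamma_{\epsilon}(x)$ would admit two distinct maximal elementary extensions, which is impossible.

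For the quantitative separation, suppose $x_{i}\in T_{\epsilon}(H_{i})$ for $i=1,2$ with $H_{1}\ne H_{2}$ and $r:=d(x_{1},x_{2})<(\epsilon_{\mathtt{d}}-\epsilon)/2$. The key step is a triangle inequality: any isometry $\gamma\in H_{i}$ with $d(x_{i},\gamma x_{i})\le\epsilon$ moves $x_{3-i}$ by at most $2r+\epsilon<\epsilon_{\mathtt{d}}$. Consequently both $(H_{1})_{\epsilon}(x_{1})$ and $(H_{2})_{\epsilon}(x_{2})$ sit inside $\Gamma_{\epsilon_{\mathtt{d}}}(x_{1})$, which by the Margulis lemma is virtually abelian and hence elementary. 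Since each $(H_{i})_{\epsilon}(x_{i})$ is infinite and its unique maximal elementary extension is $H_{i}$, both $H_{1}$ and $H_{2}$ must equal the unique maximal elementary extension of $\Gamma_{\epsilon_{\mathtt{d}}}(x_{1})$, contradicting $H_{1}\ne H_{2}$.

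The principal obstacle is the Margulis lemma itself, which I would not attempt to reprove; its standard derivation uses Zassenhaus's short-commutator trick and compactness of the set of short isometries. A secondary subtlety is the uniqueness of the maximal elementary extension of an infinite elementary subgroup, which rests on the geometric fact that such a subgroup has a canonical boundary data --- a single parabolic fixed point or a pair of loxodromic fixed points --- whose full stabilizer in $\Gamma$ is the maximal extension. Granted these two ingredients, the rest of the argument is bookkeeping with the triangle inequality to pin down the constant $(\epsilon_{\mathtt{d}}-\epsilon)/2$.
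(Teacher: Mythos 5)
The paper does not prove this theorem: it is explicitly ``quoted without proof'' from Bowditch (\cite[Proposition 3.3.3]{bowditch1993geometrical}), so there is no internal argument to compare your proposal against. That said, your proof is sound and is the standard route. The two black boxes you invoke are exactly the right ones: the Kazhdan--Margulis lemma to make $\Gamma_{\epsilon_{\mathtt{d}}}(x)$ elementary, and the uniqueness of the maximal parabolic/loxodromic extension of an infinite discrete elementary subgroup (determined by its boundary fixed-point data). Your chain of inclusions $\Gamma_{\epsilon}(x)\subset\Gamma_{\epsilon_{\mathtt{d}}}(x)\subset H$ together with the reverse containment $H_{\epsilon}(x)\subset\Gamma_{\epsilon}(x)$ correctly gives $H_{\epsilon}(x)=\Gamma_{\epsilon}(x)$, and the disjointness follows because an infinite subgroup of an infinite elementary group inherits the same fixed-point data, hence the same maximal extension. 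For the quantitative separation the triangle-inequality estimate $d(x_{3-i},\gamma x_{3-i})\le 2r+\epsilon<\epsilon_{\mathtt{d}}$ is exactly what is needed to place both $(H_{1})_{\epsilon}(x_{1})$ and $(H_{2})_{\epsilon}(x_{2})$ inside the single elementary group $\Gamma_{\epsilon_{\mathtt{d}}}(x_{1})$, forcing $H_{1}=H_{2}$. One small remark on wording: the Margulis lemma in general gives virtually \emph{nilpotent}; in $\Isom\mathbb{H}^{\mathtt{d}}$ this automatically upgrades to elementary (and hence virtually abelian), but it is worth being explicit that this last step uses the rank-one structure rather than the bare Margulis statement. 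Bowditch's own treatment (which the paper cites) sets up the Margulis lemma in a somewhat different packaging, but the content is the same.
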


We recall the following definition.
\begin{definition}Let $G$ be a group acting on a space $X$, and
let $H<G$ be a subgroup. A subset $E\subset X$ is called \textbf{precisely
$\boldsymbol{H}$-invariant} if $h(E)=E$ for all $h\in H$ and $g(E)\cap E=\emptyset$
for all $g\in G\smallsetminus H$.\end{definition}
\label{page of precisely invariant}
It can be shown that $T_{\epsilon}(H)$ is precisely $H$-invariant,
for maximal parabolic and maximal loxodromic subgroups $H<\Gamma$. It follows that
$H\backslash T_{\epsilon}(H)$ is embedded in $\Gamma\backslash\mathbb{H}^{\mathtt{d}}$,
and clarifies the statement that cusps and tubes are embedded in the
thin part of $X$.

It is worth mentioning that in case $\Gamma$ is torsion-free, that
is $\Gamma\backslash\mathbb{H}^{\mathtt{d}}$ is a manifold, the definition
of the $\epsilon$-thin part agrees with the more common definition
\begin{equation*}
\thin_{\epsilon}(X)=\{x\in\Gamma\backslash\mathbb{H}^{\mathtt{d}}:\:\inj(x)\leq\frac{\epsilon}{2}\},
\end{equation*}
where $\inj(x)$ is the injectivity radius at $x$.\\

Let us give notations for some other useful subsets of the orbifold
$X=\Gamma\backslash\mathbb{H}^{\mathtt{d}}$. For a set $A\subset\overline{\mathbb{H}^{\mathtt{d}}}$, let $\hull(A)$ stand for its hyperbolic convex hull in $\overline{\mathbb{H}^{\mathtt{d}}}$.
Then for any $0<\epsilon<\epsilon_{\mathtt{d}}$, and for any parabolic fixed point $\xi\in\partial\mathbb{H}^{\mathtt{d}}$, we define:
\begin{align}
\cusp_{\epsilon}(X)&=\underset{H<\Gamma\:\mathrm{maximal\:parabolic}}{\bigcup}H\backslash T_{\epsilon}(H)\\
\cusp_{\epsilon}^{i}(X)&=\underset{H<\Gamma\:\mathrm{maximal\:parabolic\:of\:rank\:}i}{\bigcup}H\backslash T_{\epsilon}(H)\\
\cusp_{\epsilon}(\xi)&=\Gamma_{\xi}\backslash T_{\epsilon}(\Gamma_{\xi})\\
\core(X)&=\Gamma\backslash\big(\mathbb{H}^{\mathtt{d}}\cap\hull(\Lambda(\Gamma))\big)\\
\core_{\epsilon}(X)&=\core(X)\cap\overline{X\smallsetminus\cusp_{\epsilon}(X)}
\end{align}

\begin{definition}\label{geom-finite-def}
A discrete subgroup $\Gamma<\Isom\mathbb{H}^{\mathtt{d}}$ is \textbf{geometrically finite} if $\core_{\epsilon}(X)$
is compact for some (equivalently, for all) $0<\epsilon<\epsilon_{\mathtt{d}}$.\end{definition}

\begin{remark}In this case, $\cusp_{\epsilon}(X)$ is a union of
finitely many neighborhoods $\Gamma_{\xi}\backslash T_{\epsilon}(\Gamma_{\xi})$.
Definition~\ref{geom-finite-def} is one of several
definitions, most of them turn out to be equivalent; cf.\ \cite{bowditch1993geometrical}.
\end{remark}

\subsection{The geodesic flow and the frame flow}\label{flow section}
We will occasionally prefer to study dynamics on the group of isometries rather than on the tangent or frame bundles of $X=\Gamma\backslash\mathbb{H}^{\mathtt{d}}$ \cite{quint2006overview}. First, recall that the group of orientation-preserving isometries
of $\mathbb{H}^{\mathtt{d}}$ is isomorphic to $\SO^{+}(1,\mathtt{d})=O^{+}(1,\mathtt{d})\cap\SL(\mathtt{d}+1,\mathbb{R})$,
where $O^{+}(1,\mathtt{d})$ is the connected component of the indefinite
orthogonal group $O(1,\mathtt{d})$. $\SO^{+}(1,\mathtt{d})$ is a Lie subgroup of $\SL(\mathtt{d}+1,\mathbb{R})$,
whose Lie algebra is donated by $\mathfrak{so}(1,\mathtt{d})$.

We give notations for the following Lie subalgebras of $\mathfrak{sl}(\mathtt{d}+1,\mathbb{R})$:
\begin{equation*}
\mathfrak{a}=\left\{\begin{pmatrix}0 & 0 & t\\0 & 0_{(\mathtt{d}-1)\times(\mathtt{d}-1)} & 0\\t & 0 & 0\end{pmatrix}:\:t\in\mathbb{R}\right\},\:\mathfrak{k}=\left\{\begin{pmatrix}0 & 0\\0 & B\end{pmatrix}:\:B\in\mathfrak{so}(\mathtt{d})\right\}
\end{equation*}

\begin{equation*}
\mathfrak{n}^{-}=\left\{\begin{pmatrix}0 & u^{T} & 0\\u & 0 & -u\\0 & u^{T} & 0\end{pmatrix}:\:u\in\mathbb{R}^{\mathtt{d}-1}\right\},\:\mathfrak{n}^{+}=\left\{\begin{pmatrix}0 & u^{T} & 0\\u & 0 & u\\0 & -u^{T} & 0\end{pmatrix}:\:u\in\mathbb{R}^{\mathtt{d}-1}\right\}
\end{equation*}

\begin{equation*}
\mathfrak{m}=\left\{\begin{pmatrix}0 & 0 & 0\\0 & B & 0\\0 & 0 & 0\end{pmatrix}:\:B\in\mathfrak{so}(\mathtt{d}-1)\right\}
\end{equation*}
where $\mathfrak{so}(\mathtt{d})=\{B\in M_{\mathtt{d}}(\mathbb{R}):\:B^{T}=-B\}$.
Denote the corresponding Lie subgroups by $A=a_\bullet,K,N^{-},N^{+},M$ respectively. Recall that $\mathfrak{so}(1,\mathtt{d})=\mathfrak{m}\oplus\mathfrak{a}\oplus\mathfrak{n}^{+}\oplus\mathfrak{n}^{-}$.

Let $(g_t)_{t\in\mathbb{R}}$ stand for the geodesic flow on the unit tangent bundle $T^{1}\mathbb{H}^{\mathtt{d}}$ of $\mathbb{H}^{\mathtt{d}}$. 
The structure of $\mathfrak{so}(1,\mathtt{d})$ allows to study
the geodesic flow as follow. In the hyperboloid model, $K$ is the stabilizer in $G$ of $e_{0}=(1,0,\ldots,0)^{T}$,
and $e_{0}$ is the unique fixed point of $K$ in $\mathcal{H}^{\mathtt{d}}$. The subgroup $M$ is the stabilizer in $K$ of the unit vector tangent
at $e_{0}$ to the geodesic $(a_{t}e_{0})_{t\in\mathbb{R}}$, where $a_{t}=\exp\begin{pmatrix}0 & 0 & t\\
0 & 0_{(\mathtt{d}-1)\times(\mathtt{d}-1)} & 0\\
t & 0 & 0
\end{pmatrix}\in A$. The subgroup $K$ acts transitively on $T_{e_{0}}^{1}\mathcal{H}^{\mathtt{d}}$,
and so we identify $T^{1}\mathcal{H}^{\mathtt{d}}\cong G/M$. Moreover, the
geodesic flow reads as the action of $A$ by right translation on
$G/M$, namely the geodesic flow $g_{t}$ satisfies $g_{t}(yM)=ya_{t}M$
for all $y\in G$.
Likewise, the oriented orthonormal frame bundle (shortly, ``frame bundle'') $\mathcal{F}\mathbb{H}^{\mathtt{d}}$ of $\mathbb{H}^{\mathtt{d}}$ may be realized as a bundle over $T^{1}\mathbb{H}^{\mathtt{d}}$ with fibers isomorphic to $M$, and so it may be identified with $G$. The frame flow $(g_t)_{t\in\mathbb{R}}$ is then defined on $G$ the same way as the geodesic flow is on $G/M$, by right-multiplication by $a_t$. The action of $A$ on a frame translates it along the geodesic defined by the frame's first vector, while the other orthogonal vectors are determined by parallel transport. 

We endow $G/M\cong T^{1}\mathbb{H}^{\mathtt{d}}$ and $G\cong\mathcal{F}\mathbb{H}^{\mathtt{d}}$ with $\Isom^{+}\mathbb{H}^{\mathtt{d}}$-invariant metrics by
\begin{equation*}
d_{T^{1}\mathbb{H}^{\mathtt{d}}}(g_1 M,g_2 M)=\underset{t\in[-1,1]}{\sup}d_{\mathbb{H}^{\mathtt{d}}}(\pi_{K}(g_{1}a_t M),\pi_{K}(g_{2}a_t M))
\end{equation*}
and 
\begin{equation*}
d_{\mathcal{F}\mathbb{H}^{\mathtt{d}}}(g_1,g_2)=\sum_{i=1}^{\mathtt{d}}d_{T^{1}\mathbb{H}^{\mathtt{d}}}(\pi_{i}(g_1),\pi_{i}(g_2))
\end{equation*}
where $\pi_{K}:T^{1}\mathbb{H}^{\mathtt{d}}\to \mathbb{H}^{\mathtt{d}}$ is the base point projection, and $\pi_i:\mathcal{F}\mathbb{H}^{\mathtt{d}}\to T^{1}\mathbb{H}^{\mathtt{d}}$ is the projection of a frame to its $i$'th vector.

As invariant metrics, $d_{\mathbb{H}^{\mathtt{d}}}$, $d_{T^{1}\mathbb{H}^{\mathtt{d}}}$ and $d_{\mathcal{F}\mathbb{H}^{\mathtt{d}}}$ naturally descend to metrics on $X=\Gamma\backslash\mathbb{H}^{\mathtt{d}}$, $T^{1}X=\Gamma\backslash T^{1}\mathbb{H}^{\mathtt{d}}$ and $\mathcal{F}X=\Gamma\backslash\mathcal{F}\mathbb{H}^{\mathtt{d}}$ denoted by $d_{X}$, $d_{T^{1}X}$ and $d_{\mathcal{F}X}$ respectively, for any discrete subgroup $\Gamma<G$.
The geodesic and frame flows descend to $\Gamma\backslash G/M$ and $\Gamma\backslash G$ respectively. These flows are denoted by $(g_t)_{t\in\mathbb{R}}$ as well.\\

We end up this subsection by giving notations which will be very useful
through this paper. Let $\Omega$ be the non-wandering set of the
geodesic flow on $T^{1}(\Gamma\backslash\mathbb{H}^{\mathtt{d}})$. It can be shown \cite{eberlein1972geodesic} that $\Omega$
is precisely the vectors in $T^{1}(\Gamma\backslash\mathbb{H}^{\mathtt{d}})$
that lift to vectors in $T^{1}\mathbb{H}^{\mathtt{d}}$
which define geodesics whose both end points in $\partial\mathbb{H}^{\mathtt{d}}$ belong to the limit set $\Lambda(\Gamma)$. Therefore, $\Omega\subset T^{1}\core(X)$. Similarly, let $\Omega_{\mathcal{F}}$ stand for the non-wandering set of the frame flow, which is just the set of frames whose first vectors are in $\Omega$.

We give the following notations ($\mathrm{nc}$ stands for non-cusp,
$\mathrm{c}$ for cusp):
\begin{align}
    \Omega_{\mathrm{nc}}^{\epsilon}&=\Omega_{\mathcal{F}}\smallsetminus \mathcal{F}\cusp_{\epsilon}(X)\\
    \Omega_{\mathrm{c}}^{\epsilon}&=\Omega_{\mathcal{F}}\cap \mathcal{F}\cusp_{\epsilon}(X)\\
\Omega_{\mathrm{c},i}^{\epsilon}&=\Omega_{\mathcal{F}}\cap \mathcal{F}\cusp_{\epsilon}^{i}(X)
\end{align}

\subsection{Patterson-Sullivan measures and the Bowen-Margulis measure}\label{PS BM Section}
Let $\Gamma<\Isom\mathbb{H}^{\mathtt{d}}$ be a discrete subgroup. In this section we briefly recall the construction of the Patterson-Sullivan measures on $\partial\mathbb{H}^{\mathtt{d}}$ \cite{patterson1976limit,sullivan1979density,sullivan1984entropy} and the Bowen-Margulis measure on $T^1(\Gamma\backslash\mathbb{H}^{\mathtt{d}})$. We refer to \cite{nicholls1989ergodic} for more details. We will not require this construction in the following sections, but rather just use the characterization of the Bowen-Margulis measure as the unique measure of maximal entropy (see \S\ref{Entropy Section}).\\

For every $x,y\in\mathbb{H}^{\mathtt{d}}$ and $0<s\in\mathbb{R}$
the \textbf{Poincare series} is defined by
\begin{equation*}
g_{s}(x,y)=\underset{\gamma\in\Gamma}{\sum}e^{-s\cdot d(x,\gamma y)},
\end{equation*}
and the \textbf{critical exponent} by
\begin{equation*}
\delta(\Gamma)=\inf\{s\in\mathbb{R}:\:g_{s}(x,y)<\infty\}
\end{equation*}
which is independent of $x,y\in\mathbb{H}^{\mathtt{d}}$. The critical exponent of a non-elementary discrete subgroup with a parabolic fixed point of rank $k$ satisfies $\delta(\Gamma)>\frac{k}{2}$ \cite{beardon1968exponent} (cf.\ \cite[Corollary 2.2]{mcmullen1999hausdorff}).

The subgroup $\Gamma$ is said to
be of \textbf{convergence} or \textbf{divergence} type if the Poincare
series converges or diverges, respectively, at $s=\delta(\Gamma)$. The construction of the Patterson-Sullivan measures can be done for subgroups of either type \cite{patterson1976limit}, but the details are simpler for the latter case. Fortunately, it can be shown that non-elementary geometrically finite groups are all of divergence type and so we will focus on this case.

Fix some reference point $y\in\mathbb{H}^{\mathtt{d}}$. For any $x\in\mathbb{H}^{\mathtt{d}}$
and $s>\delta(\Gamma)$, define
\begin{equation*}
\mu_{x,s}=\frac{1}{g_{s}(y,y)}\underset{\gamma\in\Gamma}{\sum}e^{-s\cdot d(x,\gamma y)}\delta_{\gamma y}
\end{equation*}
where $\delta_{\gamma y}$ stands for the Dirac measure at $\gamma y$.
We consider $\mu_{x,s}$ as a measure on $\overline{\mathbb{H}^{\mathtt{d}}}$. Any weak-$\star$ limit $\mu_{x}$ of a sequence $\mu_{x,s_n}$, where $s_n$ strictly decreases to $\delta(\Gamma)$, is called a \textbf{Patterson-Sullivan} measure with respect to $x$.
Such a limit measure always exists, but in general it doesn't have to be unique
and may as well depend on the reference point $y$. However, in the non-elementary
and geometrically finite case the family is unique and independent of $y$, up to a multiplicative constant.
A key property of the Patterson-Sullivan measure is that it is supported
on the limit set $\Lambda(\Gamma)$.

We use the conformal ball model $\mathbb{B}^{\mathtt{d}}$. Recall that $T^{1}\mathbb{B}^{\mathtt{d}}$ may be identified with $(\mathbb{S}^{\mathtt{d}-1}\times\mathbb{S}^{\mathtt{d}-1}\smallsetminus\diag)\times\mathbb{R}$,
where $\diag=\{(\xi,\xi):\:\xi\in\mathbb{S}^{\mathtt{d}-1}\}$, as follows; 
each vector $x\in T^{1}\mathbb{B}^{\mathtt{d}}$ defines a unique geodesic $(x_t)_{t\in\mathbb{R}}$ in $\mathbb{B}^{\mathtt{d}}$, which is parametrized by hyperbolic length in such a way that $x_0$ is the Euclidean midpoint of the geodesic.
Let $\eta_{\pm}=\lim_{t\to\pm\infty}x_t\in\partial\mathbb{B}^{\mathtt{d}}=\mathbb{S}^{\mathtt{d}-1}$ be the end points of the geodesic.
Then $x$ is identified with $(\eta_{-},\eta_{+},s)$ where $s$ is the unique real number such that $x_s\in\mathbb{B}^{\mathtt{d}}$ is the base point of the vector $x$.
Conversely, each triplet
defines a unique point in $T^{1}\mathbb{B}^{\mathtt{d}}$ in the same
way.

Using this parametrization, we define the \textbf{Bowen-Margulis} measure by
\begin{equation*}
dm_{\BM}(\eta_{-},\eta_{+},t)=\frac{d\mu_{0}(\eta_{-})d\mu_{0}(\eta_{+})d\lambda(t)}{\|\eta_{+}-\eta_{-}\|^{2\delta}}
\end{equation*}
where $\lambda$ is the Lebesgue measure on $\mathbb{R}$, $\mu_0$ is the Patterson-Sullivan measure with respect to $0\in\mathbb{B}^{\mathtt{d}}$ and $\|\cdot\|$ is the Euclidean norm on $\mathbb{S}^{\mathtt{d}-1}\subset\mathbb{R}^{\mathtt{d}}$.

The normalization by $\|\eta_{+}-\eta_{-}\|^{2\delta}$ makes sure
that $m_{\BM}$ is invariant under the action of orientation preserving
isometries, and so it descends to a measure (denoted
by $m_{\BM}$ as well) on $\Gamma\backslash T^{1}\mathbb{B}^{\mathtt{d}}$. 
In case $m_{\BM}(\Gamma\backslash T^{1}\mathbb{B}^{\mathtt{d}})<\infty$, we normalize $m_{\BM}$ to be a probability measure. It can be shown that this is indeed the case for non-elementary geometrically finite subgroups.
The Bowen-Margulis measure on $T^{1}\mathbb{H}^{\mathtt{d}}\cong G/M$ is naturally lifted to the frame bundle $\mathcal{F}\mathbb{H}^{\mathtt{d}}\cong G$ using the Haar measure on $M$. It is denoted $m_{\BM}^{\mathcal{F}}$.

\subsection{Entropy}\label{Entropy Section}
Let $(X,\mathscr{B},\mu,T)$ be a measure-preserving system. Recall the definition of the measure-theoretic entropy \cite{einsiedlerEntropyBook}.

\begin{definition}The \textbf{entropy} of a measurable partition
$\xi$ is
\begin{equation*}
H_{\mu}(\xi)=-\underset{A\in\xi}{\sum}\mu(A)\log\mu(A),
\end{equation*}
where we take the convention ``$0\log0=0$''. 
\end{definition}

\begin{definition}The \textbf{conditional entropy} of a partition
$\xi$, given a partition $\eta$, is
\begin{equation*}
H_{\mu}(\xi|\eta)=\underset{B\in\eta}{\sum}\mu(B)H_{\mu_{B}}(\xi),
\end{equation*}
where $\mu_{B}=\frac{1}{\mu(B)}\mu|_{B}$ is the restriction of $\mu$
to $B$, normalized to be a probability measure.
\end{definition}

\begin{definition}Let $\xi$ be a measurable partition with finite entropy. The \textbf{entropy}
of $T$ with respect to $\xi$ is
\begin{equation*}
h_{\mu}(T,\xi)=\underset{n\to\infty}{\lim}\frac{1}{n}H_{\mu}(\underset{i=0}{\overset{n-1}{\bigvee}}T^{-i}\xi)=\underset{n\in\mathbb{N}}{\inf}\frac{1}{n}H_{\mu}(\underset{i=0}{\overset{n-1}{\bigvee}}T^{-i}\xi)
\end{equation*}
where $\xi_{1}\vee\xi_{2}=\{A\cap B:\:A\in\xi_{1},\:B\in\xi_{2}\}$
is the common refinement of $\xi_{1}$ and $\xi_{2}$.
\end{definition}

\begin{remark}The fact that the former limit exists, and equals to
the infimum of the sequence $\frac{1}{n}a_{n}=\frac{1}{n}H_{\mu}(\underset{i=0}{\overset{n-1}{\bigvee}}T^{-i}\xi)$,
is due to $(a_{n})_{n=1}^{\infty}$ being sub-additive. \end{remark}

\begin{definition}The \textbf{entropy} of $T$ is $h_{\mu}(T)=\underset{\xi:\:H_{\mu}(\xi)<\infty}{\sup}h_{\mu}(T,\xi)$.\end{definition}

An analogous notion of entropy, in the context of topological and
metric spaces, is the topological entropy. For compact topological spaces it is defined by replacing the role of measurable partitions by open covers \cite{adler1965topological}. For non-compact metric spaces $(X,d)$, the definition extends as follows \cite{bowen1971entropy}.

\begin{definition}Let $T:\:X\to X$ be a uniformly continuous map.
Let $K\subset X$ be a compact subset, and take $\epsilon>0$ and
$n\in\mathbb{N}$. A set $E\subset K$ is called $\boldsymbol{(n,\epsilon,K,d,T)}$-\textbf{separated}
if for all $x,y\in E$ there is an integer $0\leq i<n$ such that
$d(T^{i}x,T^{i}y)>\epsilon$. We denote by $r_{d}(n,\epsilon,K,T)$
the maximal cardinality of a $(n,\epsilon,K,d,T)$-separated set.
\end{definition}

\begin{definition}$h_{d}(T,K)=\underset{\epsilon\to0^{+}}{\lim}\underset{n\to\infty}{\limsup}\frac{1}{n}\log(r_{d}(n,\epsilon,K,T))$.\end{definition}

\begin{definition}\label{top_entropy_1}$h_{\htop,d}(T)=\underset{K\subset X\:\mathrm{compact}}{\sup}h_{d}(T,K)$.\end{definition}

Often $h_{\htop,d}(T)$ is referred to as the topological entropy
of $T$. However, sometimes the following definition is given as well. 

\begin{definition}\label{top_entropy_2}$h_{\htop}(T)=\underset{\rho}{\inf}\:h_{\htop,\rho}(T)$,
where the infimum is taken over the set of all metrics $\rho$ equivalent
to $d$.\end{definition} 

Recall the variational principle in the non-compact context, which compares between the measure-theoretic entropy and the topological entropy.
\begin{theorem}[The variational principle]\label{var_non_compact} Let $X$ be a metric space, and $T:X\to X$ a homeomorphism.
Then
\begin{equation*}
\underset{\mu\in M_{T}}{\sup}h_{\mu}(T)\leq h_{\htop}(T)
\end{equation*}
for $M_T$ the set of all $T$-invariant Borel probability measures on $X$.
\end{theorem}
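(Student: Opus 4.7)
The plan is to reduce the non-compact variational inequality to the classical compact variational principle by exploiting tightness of Borel probability measures. It suffices to show that $h_\mu(T)\le h_{\htop,\rho}(T)$ for every $\mu\in M_T$ and every metric $\rho$ equivalent to $d$; the claim will then follow by taking the infimum over $\rho$. Since the map $\mu\mapsto h_\mu(T)$ is affine and an ergodic decomposition is available in our setting, I would first reduce to the case that $\mu$ is ergodic. Fix such an ergodic $\mu$ and an equivalent metric $\rho$.

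For ergodic $\mu$ I would invoke Katok's entropy formula: for every $\delta\in(0,1)$ and $\epsilon>0$, if $N_{\mu}(n,\epsilon,\delta)$ denotes the minimum number of Bowen $(n,\epsilon)$-balls in the metric $\rho$ whose union carries $\mu$-mass at least $1-\delta$, then
\begin{equation*}
h_\mu(T)=\lim_{\epsilon\to 0^+}\limsup_{n\to\infty}\frac{1}{n}\log N_\mu(n,\epsilon,\delta).
\end{equation*}
Although Katok originally proved this on compact spaces, the same argument applies here because the right-hand side is insensitive to sets of measure below $\delta$. I would then use tightness of $\mu$: for every $\delta>0$ there exists a compact set $K_\delta\subset X$ with $\mu(K_\delta)>1-\delta$. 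Choose $E\subset K_\delta$ to be a maximal $(n,\epsilon,K_\delta,\rho,T)$-separated set, of cardinality $r_\rho(n,\epsilon,K_\delta,T)$. Maximality forces $E$ to be $(n,\epsilon)$-spanning for $K_\delta$: every $y\in K_\delta$ lies within Bowen-$\rho$-distance $\epsilon$ of some $x\in E$, for otherwise $E\cup\{y\}$ would still be separated. Consequently the $|E|$ Bowen balls centered at points of $E$ cover $K_\delta$, so their union carries $\mu$-mass at least $1-\delta$, giving
\begin{equation*}
N_\mu(n,\epsilon,\delta)\le r_\rho(n,\epsilon,K_\delta,T).
\end{equation*}

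Taking $\limsup_n\frac{1}{n}\log$ on both sides and then letting $\epsilon\to 0^+$ produces $h_\mu(T)\le h_\rho(T,K_\delta)\le h_{\htop,\rho}(T)$, and passing to the infimum over $\rho$ finishes the argument. The principal obstacle is non-compactness of $X$, which prevents direct application of the Bowen--Dinaburg proof. Tightness of $\mu$ is precisely what closes the gap: it produces a compact $K_\delta$ on which the classical separated/spanning duality operates unchanged, while Katok's formula ensures that the measure-theoretic entropy is captured by what happens on such large-measure compact sets. Verifying Katok's formula in the ergodic non-compact setting, and reducing the non-ergodic case via the ergodic decomposition together with affinity of $h_\mu(T)$, are the only points that require care beyond the compact template, and both are standard once the compact reduction is in hand.
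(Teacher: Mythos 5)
The paper does not actually prove this statement; it is quoted as standard background and cited (via \cite{bowen1971entropy}) at the top of \S\ref{Entropy Section}. Your argument is therefore a genuine alternative derivation, and its skeleton is sound: reduce to ergodic $\mu$ and a fixed equivalent metric $\rho$; use the inequality $h_\mu(T)\leq\lim_{\epsilon\to 0^+}\limsup_n\frac{1}{n}\log N_\mu(n,\epsilon,\delta)$ from Katok's formula; bound $N_\mu(n,\epsilon,\delta)\le r_\rho(n,\epsilon,K_\delta,T)$ on a compact $K_\delta$ of $\mu$-mass $>1-\delta$ by the maximal-separated-is-spanning duality; and take limits to get $h_\mu(T)\le h_\rho(T,K_\delta)\le h_{\htop,\rho}(T)$, then infimize over $\rho$.

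Two points need tightening. First, both the ergodic decomposition and the tightness of $\mu$ implicitly assume $X$ is Polish (or at least standard Borel); the theorem as stated allows an arbitrary metric space, where a Borel probability measure need not be inner regular with respect to compacta. This is harmless for $\Gamma\backslash G$, but the hypothesis should be made explicit, whereas Bowen's 1971 route does not obviously require it. Second, the sentence justifying the non-compact Katok formula (``the right-hand side is insensitive to sets of measure below $\delta$'') is too thin: the two inequalities comprising Katok's equality have very different proofs, and it is only the direction you actually use that extends painlessly to the non-compact setting. That direction is precisely the content of the paper's own Lemma~\ref{entropy_bound_by_Bowen_balls}, whose proof involves no compactness hypothesis; citing that (or reproducing its argument) would close the gap cleanly. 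With these repairs the proof is complete, and it gives a more modern take than Bowen's original argument, trading Bowen's direct partition-counting for Katok's 1980 covering-number characterization of entropy.
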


Unlike the compact case \cite{goodwyn1969topological,dinaburg1970relationship,goodman1971relating,misiurewicz1975short}, a strict inequality in Theorem \ref{var_non_compact} may be true
\cite{handel1995metrics}.
This principle gives rise to a natural notion of \textbf{measures
of maximal entropy}, that is measures $\mu\in M_{T}$ for which $h_{\mu}(T)=h_{\htop}(T)$, which may or may not exist (and if exist, may or may not be unique).

When restricting ourselves to the discussion of the time-one-map of
the geodesic flow $T_{a}(x)\coloneqq g_{1}(x)=xa$, over the unit tangent bundle of a hyperbolic orbifold $T^{1}(\Gamma\backslash\mathbb{H}^{\mathtt{d}})$,
much more can be said about both the topological entropy and measures
of maximal entropy. A good reference for that is \cite{otal2004principe} (see also \cite{paulin2015equilibrium}).

\begin{theorem}\label{maximal_entropy}Let
$\Gamma<\Isom\mathbb{H}^{\mathtt{d}}$ be a non-elementary discrete
subgroup. The following hold for $T_a$, the time-one-map of the geodesic flow on $T^{1}(\Gamma\backslash\mathbb{H}^{\mathtt{d}})$:
\begin{enumerate}
\item $h_{\htop}(T_a)=\delta(\Gamma$).
\item There is a measure of maximal entropy for the restriction of the geodesic
flow to its non-wandering set, if and only if $m_{\BM}(\Gamma\backslash T^{1}\mathbb{H}^{\mathtt{d}})<\infty$.
Furthermore, in that case, $m_{\BM}$ is the unique measure of maximal
entropy.
\item If $\Gamma$ is geometrically finite, then $h_{\htop,d_{T^{1}X}}(T_a)=\delta(\Gamma)$, for $d_{T^{1}X}$ the metric defined in \S\ref{flow section}.
\end{enumerate}
\end{theorem}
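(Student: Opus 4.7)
My plan is to prove the three parts by combining two complementary ingredients: orbit counting from convergence of the Poincar\'e series, and the Patterson--Sullivan/Bowen--Margulis construction recalled in \S\ref{PS BM Section}.

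For the upper bound in (1) and for (3), I would bound $h_{\htop, d_{T^1 X}}(T_a)$ directly via orbit counting. Given a compact $K\subset T^1 X$ and an $(n,\epsilon,K,d_{T^1 X},T_a)$-separated set $E$, lifting points of $E$ to $T^1\mathbb{H}^{\mathtt{d}}$ and using negative curvature, each element of $E$ is associated to a distinct $\gamma\in\Gamma$ with $d_{\mathbb{H}^{\mathtt{d}}}(y_0,\gamma y_0)\leq n+C(\epsilon,K)$ for some fixed $y_0\in\mathbb{H}^{\mathtt{d}}$. Since $g_s(y_0,y_0)<\infty$ for all $s>\delta(\Gamma)$, this yields $|E|\ll_{s,K,\epsilon} e^{sn}$ for every $s>\delta$, and hence $h_{\htop,d_{T^1 X}}(T_a)\leq\delta(\Gamma)$. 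This establishes (3), and via Definition~\ref{top_entropy_2} also the upper bound in (1).

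For the matching lower bound and for the existence half of (2), when $m_{\BM}$ is finite I would compute $h_{m_{\BM}}(T_a)=\delta(\Gamma)$ directly. The product structure $d\mu_0(\eta_-)\,d\mu_0(\eta_+)\,d\lambda(t)/\|\eta_+-\eta_-\|^{2\delta}$ together with the $\delta$-conformality of the Patterson--Sullivan measures forces the conditional measures of $m_{\BM}$ on unstable horospheres to transform under $T_a$ by the factor $e^{\delta t}$; a Ledrappier--Young/Rokhlin computation then gives $h_{m_{\BM}}(T_a)=\delta$, and the variational principle (Theorem~\ref{var_non_compact}) yields $h_{\htop}(T_a)\geq\delta$, completing (1). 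When $m_{\BM}$ is infinite, the lower bound is obtained instead from a direct construction of approximately $e^{\delta n}$-many $(n,\epsilon)$-separated orbits shadowing long closed geodesics, whose exponential growth rate is $\delta$ by the Poincar\'e series analysis of the non-elementary divergence-type case. For uniqueness in (2), and for the converse statement that no MME exists when $m_{\BM}$ is infinite, I would invoke the standard rigidity argument: any $T_a$-invariant probability $\mu$ with $h_\mu=\delta$ must saturate the Margulis--Ruelle inequality along the unstable foliation, so by uniqueness (up to scalar) of $\delta$-conformal densities on the limit set, its conditional measures on unstables coincide up to normalization with those of $m_{\BM}$. Ergodicity of $m_{\BM}$ then forces $\mu=m_{\BM}/\|m_{\BM}\|$, ruling out an MME in the infinite-mass case and giving uniqueness in the finite-mass case.

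The main obstacle is the leafwise entropy computation in the presence of cusps, where the horospherical foliation loses uniform smoothness near parabolic fixed points, and orbits typical for $m_{\BM}$ may make arbitrarily long excursions into $\cusp_{\epsilon}(X)$. Making the Ledrappier--Young formula and the associated absolute-continuity arguments rigorous in this setting requires precise control on the shape of PS measures near parabolic fixed points, which in the geometrically finite case is provided by the sharp shadow lemma of Sullivan--Stratmann--Velani, combined with Beardon's inequality $\delta(\Gamma)>r_{\max}/2$ (recalled in the remark following Theorem~\ref{theorem2}) to guarantee that cusp contributions are strictly subdominant to the exponent $\delta$.
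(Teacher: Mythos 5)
The paper does not prove Theorem~\ref{maximal_entropy}; it is quoted as a known result, with references to Otal--Peigné and Paulin--Pollicott--Schapira given just before the statement, so there is no in-paper proof to compare your sketch against. Evaluating the sketch on its own terms, there is a concrete logical gap in part (1). When $m_{\BM}$ is infinite you propose to obtain the lower bound $h_{\htop}(T_a)\geq\delta$ by exhibiting roughly $e^{\delta n}$ many $(n,\epsilon)$-separated orbits. That argument bounds $h_{\htop,d_{T^1X}}(T_a)$ from below for the one fixed metric $d_{T^1X}$, but it says nothing about $h_{\htop}(T_a)=\inf_\rho h_{\htop,\rho}(T_a)$; the paper's remark immediately following Theorem~\ref{var_non_compact} cites exactly the phenomenon that these two quantities can differ on non-compact spaces. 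To bound the infimum over metrics from below one must go through the variational principle: turn the separated periodic orbits into a sequence of $T_a$-invariant \emph{probability} measures (e.g.\ uniform averages over the closed orbits of length at most $T$) and show their entropies converge to $\delta$. As written, your plan produces separated sets but no invariant measure in the infinite-mass case, so the inequality $h_{\htop}(T_a)\geq\delta$ is not actually established there.

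A second gap is the treatment of the convergence-type case inside the infinite-mass branch of (2). You invoke ``ergodicity of $m_{\BM}$'' in the uniqueness/nonexistence argument and speak of $h_{m_{\BM}}(T_a)$, but when $\Gamma$ is of convergence type $m_{\BM}$ is totally dissipative (Hopf--Tsuji--Sullivan) and hence not ergodic, and measure-theoretic entropy is not a well-defined number for an infinite invariant measure. The correct route is to argue about an arbitrary $T_a$-invariant probability $\mu$ with $h_\mu(T_a)=\delta$ directly: the Margulis--Ruelle saturation step you outline should be used to show that the unstable conditionals of such a $\mu$ are necessarily $\delta$-conformal, and then to deduce that this forces divergence type, conservativity, and finiteness of $m_{\BM}$ --- rather than reasoning about $m_{\BM}$ itself as if it were already known to be a finite ergodic measure. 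The remaining points (the bounded-multiplicity correction to ``distinct $\gamma$'' in the upper-bound count, and the fact that (3) needs the lower bound supplied only later via $h_{m_{\BM}}=\delta$ in the geometrically finite case) are minor and recoverable.
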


\subsection{Closed geodesics, and counting elements in discrete subgroups}
We quote two results related to counting in a discrete subgroup of isometries.

First, a useful proposition which can be found in \cite{nicholls1989ergodic} for example.
\begin{proposition}\label{number_of_elements_which_translate}Let $\Gamma<\Isom\mathbb{H}^{\mathtt{d}}$
be discrete. Then for all $o\in\mathbb{H}^{\mathtt{d}}$ there is
a constant $B$ such that $|N(r,o)|\leq Be^{r\delta}$
for all $r>0$, where
\begin{equation*}
N(r,o)=\{\gamma\in\Gamma:\:d(\gamma o,o)\leq r\}.
\end{equation*}

\end{proposition}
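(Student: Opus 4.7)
The plan is to combine the Patterson--Sullivan formalism just recalled with a shadow-lemma counting argument, decomposing $N(r,o)$ into unit-width annular shells. First, I would reduce the claim to an annular bound: setting $A_{k} = \{\gamma \in \Gamma : k-1 < d(o,\gamma o) \leq k\}$, if $|A_{k}| \leq B' e^{\delta k}$ with a constant $B'$ independent of $k$, then summing over $k \leq \lceil r \rceil$ gives $|N(r,o)| \leq B e^{\delta r}$ with $B = B' \tfrac{e^{\delta}}{e^{\delta}-1}$.

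For the annular bound, fix a Patterson--Sullivan measure $\mu_{o}$ on $\partial \mathbb{H}^{\mathtt{d}}$, and for each $\gamma \in A_{k}$ consider the shadow $S_{\gamma}$ from $o$ of a ball $B(\gamma o, R)$ of some fixed (large) radius $R$. By (the lower bound in) Sullivan's shadow lemma, $\mu_{o}(S_{\gamma}) \geq c \, e^{-\delta \cdot d(o,\gamma o)} \geq c \, e^{-\delta k}$ for some $c = c(R) > 0$. The second ingredient is that the family $\{S_{\gamma}\}_{\gamma \in A_{k}}$ covers $\partial \mathbb{H}^{\mathtt{d}}$ with multiplicity bounded uniformly in $k$: a point $\xi \in \partial \mathbb{H}^{\mathtt{d}}$ lies in $S_{\gamma}$ only if $\gamma o$ is within hyperbolic distance $R$ of the geodesic ray from $o$ to $\xi$, and the annular constraint $k-1 < d(o, \gamma o) \leq k$ confines such a $\gamma o$ to a tube of unit length and radius $R$, whose hyperbolic volume depends only on $R$; by discreteness of $\Gamma$ only boundedly many orbit points fit inside. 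Summing the lower bounds against the bounded multiplicity yields $|A_{k}| \cdot c \, e^{-\delta k} \leq \sum_{\gamma \in A_{k}} \mu_{o}(S_{\gamma}) \leq N \cdot \mu_{o}(\partial \mathbb{H}^{\mathtt{d}})$, hence $|A_{k}| \leq B' e^{\delta k}$ as required.

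The main obstacle is ensuring that the Patterson--Sullivan lower bound in the shadow lemma applies uniformly at every orbit point, since the proposition is stated for an arbitrary discrete subgroup. For groups of \emph{convergence} type one can sidestep the shadow lemma altogether: the finiteness $g_{\delta}(o,o) < \infty$ immediately gives $|N(r,o)| \, e^{-\delta r} \leq g_{\delta}(o,o)$, which is the desired estimate with $B = g_{\delta}(o,o)$. For \emph{divergence}-type groups (in particular for all non-elementary geometrically finite groups, the setting used throughout the paper) the Patterson--Sullivan construction places sufficient mass on the conical part of the limit set that the shadow-lemma lower bound can be arranged to hold at every $\gamma o$, after possibly enlarging $R$; the annular argument then concludes the proof.
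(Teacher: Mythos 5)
The paper states this proposition citing Nicholls' book without giving a proof, so there is no in-paper argument to compare against; I can only compare with the standard proof, which is indeed the shadow-lemma argument you outline (and, to my knowledge, essentially what Nicholls presents). Your decomposition into unit annuli, the Sullivan shadow-lemma lower bound on $\mu_o(S_\gamma)$, and the bounded-multiplicity covering are the canonical steps.

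A few clarifications are in order. First, the proposition as literally stated --- for an arbitrary discrete $\Gamma < \Isom\mathbb{H}^{\mathtt{d}}$ --- is actually false: take $\Gamma$ cyclic generated by a single loxodromic element. Then $\delta(\Gamma)=0$ while $|N(r,o)|$ grows linearly in $r$, so no constant $B$ can work. This is precisely the elementary divergence-type case your argument does not cover, and correctly so: it is a genuine counterexample, not a gap in your proof. The hypothesis should be that $\Gamma$ is non-elementary (equivalently here, that the shadow lemma is available), which holds everywhere the paper actually invokes the proposition. Second, your reduction from the annular bound to the full count via $\sum_{k\le\lceil r\rceil} e^{\delta k}\ll_{\delta} e^{\delta r}$ silently requires $\delta>0$, again automatic once non-elementarity is imposed. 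Third, your multiplicity bound is correct, but the phrasing via ``hyperbolic volume plus discreteness'' is a little loose; the cleanest justification is group-theoretic: if $\gamma_1 o$ and $\gamma_2 o$ both lie in a ball of radius $R+1$, then $\gamma_1^{-1}\gamma_2\in N(2(R+1),o)$, so the multiplicity of the shadow cover is at most the single finite constant $|N(2(R+1),o)|$, uniformly over all annuli. With these adjustments your argument is sound for the cases where the proposition is true, which includes all the cases the paper uses.
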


Next, the following theorem \cite{margulis2004some,roblin2003ergodicite,paulin2015equilibrium} justifies the mentioned claim that the bound on the number of ``bad'' periodic $a_\bullet$-orbits in Theorem \ref{theorem3} is exponentially smaller than the number of all periodic $a_\bullet$-orbits.
\begin{theorem}\label{number_of_periodic_orbits}Let $\Gamma<\Isom\mathbb{H}^{\mathtt{d}}$ be a non-elementary geometrically finite subgroup. Then the number of periodic $a_\bullet$-orbits
of lengths at most $T$ is asymptotically $\frac{e^{\delta T}}{\delta T}$,
that is $\underset{T\to\infty}{\lim}|\Per_{\Gamma}(T)|\cdot(\frac{e^{\delta T}}{\delta T})^{-1}=1$.\end{theorem}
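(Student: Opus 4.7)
The plan is to follow the mixing-based approach of Roblin, with input from Paulin--Pollicott--Schapira's thermodynamic formalism. The essential inputs are that for non-elementary geometrically finite $\Gamma<G$, the Bowen--Margulis measure $m_{\BM}$ on $\Gamma\backslash G/M$ is finite (Sullivan) and mixing under the geodesic flow (Babillot), the latter using the non-arithmeticity of the length spectrum, which is automatic in the rank-one non-elementary setting.

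First I would establish the orbit-counting asymptotic
\begin{equation*}
\#\{\gamma\in\Gamma:\:d(y,\gamma y)\leq T\}\sim C(y)\,e^{\delta T}
\end{equation*}
as a consequence of mixing. The argument considers the correlation $\int \phi(x)\psi(xa_t)\,dm_{\BM}(x)$ for test functions $\phi,\psi$ supported on small boxes of the form $B_{\epsilon}^{N^+}B_{\epsilon}^{MA}B_{\epsilon}^{N^-}$ around a fixed frame in the thick part, unfolds the integral over $\Gamma$, and balances mixing (which forces the correlation to approach $\frac{m_{\BM}(\phi)m_{\BM}(\psi)}{\|m_{\BM}\|}$) against the geometric count of $\gamma\in\Gamma$ that translate the support of $\phi$ close to the support of $\psi$. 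After dividing out by the local Patterson--Sullivan density factors, the orbit counting asymptotic drops out.

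Second, I would convert the orbit count into the geodesic count. Closed geodesics of length $\ell$ are in bijection with $\Gamma$-conjugacy classes of primitive loxodromic elements with translation length $\ell$. Using a closing-lemma-type argument in the thick part, one matches $\Gamma$-orbits of loxodromic elements of translation length close to $T$ to their axes, and relates the orbit count to a weighted count of primitive closed geodesics of length $\leq T$. A Tauberian step (alternatively, a direct partial summation using that contributions from non-primitive powers are of strictly lower exponential order, bounded by $\sum_{k\geq 2}e^{\delta T/k}$) then yields $|\Per_\Gamma(T)|\sim e^{\delta T}/(\delta T)$.

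The main obstacle in the geometrically finite (non-compact) case is controlling closed geodesics that spend a large fraction of their length inside the cusps: both the unfolding argument and the closing lemma work cleanly in the thick part, and one must separately show that geodesics making deep cusp excursions contribute only a lower-order term to both the orbit count and the geodesic count. This is precisely the kind of statement made quantitative by Theorem~\ref{theorem2} of the present paper, which guarantees a strict drop in entropy proportional to the mass in $\cusp_{\epsilon}(X)$; the classical references handle this via the finiteness of $m_{\BM}$ together with a careful analysis of the Patterson--Sullivan measure near parabolic fixed points (using Sullivan's shadow lemma for cusps), ensuring that the $e^{\delta T}$ leading term is entirely captured by thick-part contributions.
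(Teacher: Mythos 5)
The paper does not prove Theorem~\ref{number_of_periodic_orbits}; it quotes it with citations to \cite{margulis2004some,roblin2003ergodicite,paulin2015equilibrium}, so there is no in-paper proof to compare your argument against. Your sketch follows the Roblin / Paulin--Pollicott--Schapira route that those citations refer to, and the outline is correct: divergence type plus finiteness and mixing of $m_{\BM}$ (the latter via non-arithmeticity of the length spectrum, which does hold for any non-elementary discrete group acting on a CAT$(-1)$ space) gives the $\Gamma$-orbit counting $\#\{\gamma:\:d(y,\gamma y)\leq T\}\sim C(y)e^{\delta T}$; a closing/shadowing argument in the thick part converts this to a weighted count of primitive conjugacy classes; and Abel summation yields the $\frac{e^{\delta T}}{\delta T}$ asymptotic, with non-primitive powers contributing $O(T e^{\delta T/2})$, hence negligible.

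Two small points worth flagging. First, the bound $\sum_{k\geq 2}e^{\delta T/k}$ as written diverges; the sum is implicitly over $2\leq k\leq T/\ell_{\min}$ (there are only finitely many relevant powers because of the length gap of Remark~\ref{loxodromic_remark}), so the correct estimate is $\ll T e^{\delta T/2}$, still of lower exponential order. Second, your remark that the cusp excursions can be controlled by invoking Theorem~\ref{theorem2} is not quite what the classical proofs do, nor is it needed: Roblin and PPS never pass through an entropy bound. They obtain tightness of the relevant measures directly from finiteness of $m_{\BM}$ together with the shadow lemma at bounded parabolic fixed points; Theorem~\ref{theorem2} is a (logically independent) quantitative refinement in a different direction. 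With those caveats, the sketch is a faithful summary of the proofs in the references the paper relies on.
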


We will revisit this topic in $\S\ref{Amenable covers Section}$ when we study closed geodesics in regular covers of geometrically finite orbifolds.

\subsection{Notations}
We end up this section with some common notations.
\begin{enumerate}
\item Let $s_1,\ldots,s_n$ be parameters, with $S_i$ the range of $s_i$ for all $1\leq i\leq n$. Let $f_{1},f_{2}$ be positive functions on some space $T$, which may also implicitly depend on the values of the parameters. We denote $f_{1}\ll_{s_1,\ldots,s_n} f_{2}$ if there is a positive function $g$ on $\prod_{i=1}^{n}S_{i}$ such that $f_1(t,s)\leq g(s)f_2(t,s)$ for all $t\in T,s\in\prod_{i=1}^{n}S_{i}$. If this relation holds with some constant function $g$, we simply write $f_{1}\ll f_{2}$. The subgroup $\Gamma$, as well as the dimension $\mathtt{d}$, are considered constant throughout this paper, hence will not be explicitly mentioned in the $\ll$ notation.
\item We denote by $\pi_{K}$ the projection from $G$ to $G/K$ sending a frame to its base point, and use the same notation $\pi_{K}$ for the projection from $G/M$ to $G/K$ sending a tangent vector to its base point. Likewise, we denote by $\pi_{\Gamma}$ the projections from $G$ and $G/M$ to $\Gamma\backslash G$ and $\Gamma\backslash G/M$ respectively.
\end{enumerate}

\section{Main Lemma and Proof of Theorem \ref{theorem1}}\label{section_proofs}

\subsection{A treatment of non-parabolic elements}
In a few points in the proofs, elliptic elements require specific treatment. The key tool to deal with that is Lemma~\ref{deal_with_elliptic}, which is an immediate corollary of Selberg's Lemma.

\begin{proposition}[Selberg's Lemma, \cite{selberg1962discontinuous}]\label{Selberg's Lemma}
Let $k$ be a field of characteristic $0$. Then any finitely generated subgroup of $\GL_{n}(k)$ contains a torsion-free subgroup of finite index.
\end{proposition}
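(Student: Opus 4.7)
The plan is to reduce to a finitely generated $\mathbb{Z}$-algebra and then exhibit a principal congruence subgroup at a ``large'' prime that is both of finite index and torsion-free. Let $H < \GL_n(k)$ be finitely generated by $g_1,\ldots,g_r$, and let $R \subset k$ be the subring generated by all entries of the $g_i$ and of their inverses. Then $R$ is a finitely generated $\mathbb{Z}$-algebra that is a domain (being a subring of a field of characteristic zero), and $H$ is contained in $\GL_n(R)$.

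The key structural input is that any finitely generated $\mathbb{Z}$-algebra is a Jacobson ring, so every maximal ideal $\mathfrak{m} \subset R$ has a \emph{finite} residue field $R/\mathfrak{m}$. Consequently $\GL_n(R/\mathfrak{m})$ is finite, the reduction map $\rho_\mathfrak{m}:\GL_n(R) \to \GL_n(R/\mathfrak{m})$ has kernel $K_\mathfrak{m}$ of finite index, and hence $H \cap K_\mathfrak{m}$ is automatically of finite index in $H$ for every choice of $\mathfrak{m}$. The whole content of the lemma is therefore to pick $\mathfrak{m}$ so that $K_\mathfrak{m}$ is torsion-free. For this, I would use the standard observation that any nontrivial torsion element of $\GL_n(k)$ has eigenvalues that are roots of unity of orders bounded by a constant $C(n)$ depending only on $n$ (as each such eigenvalue is algebraic of degree at most $n$ over $\mathbb{Q}$). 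If $A \in K_\mathfrak{m}$ has prime order $m > 1$ (to which case one reduces by taking an appropriate power), then lifting to the integral closure of $R$ at a prime $\mathfrak{m}'$ above $\mathfrak{m}$, each eigenvalue $\zeta$ of $A$ is a primitive $m$-th root of unity satisfying $\zeta \equiv 1 \pmod{\mathfrak{m}'}$. The cyclotomic identity $\prod_{j=1}^{m-1}(1-\zeta^j)=m$ in $\mathbb{Z}[\zeta]$ then forces the residue characteristic of $\mathfrak{m}$ to equal $m$. Hence choosing $\mathfrak{m}$ whose residue characteristic is a prime $p > C(n)$ rules out the existence of any such $A$.

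The principal obstacle, as I see it, is the arithmetic step verifying torsion-freeness: the cyclotomic input together with the bookkeeping about which primes can be realized as residue characteristics is where the real content sits. Realizing a given large prime $p$ as the residue characteristic of some maximal ideal of $R$ requires a short finiteness argument—after inverting a single element of $R$ one makes it finite over $\mathbb{Z}[1/N]$ for some $N$, and then any $p \nmid N$ has a maximal ideal of $R$ lying above it. By comparison, the passage from a statement about subgroups of $\GL_n(k)$ to one about $\GL_n(R)$, and the finite-index claim for congruence kernels, is essentially routine commutative algebra.
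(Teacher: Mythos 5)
The paper quotes Selberg's Lemma without giving a proof (it simply cites Selberg's article), so there is no in-text argument to compare against. Your overall strategy---reduce to $\GL_n(R)$ with $R$ a finitely generated $\mathbb Z$-algebra, pass to a congruence kernel $K_{\mathfrak m}$, and use the cyclotomic identity $\prod_{j=1}^{m-1}(1-\zeta^j)=m$ to pin down torsion---is sound and is one of the standard routes. However, there is a genuine error in the claim that torsion in $\GL_n(k)$ has order bounded by a constant $C(n)$ depending only on $n$, with the justification that ``each such eigenvalue is algebraic of degree at most $n$ over $\mathbb Q$.'' An eigenvalue of an $n\times n$ matrix over $k$ has degree at most $n$ over $k$, not over $\mathbb Q$, and the asserted bound is simply false: $\zeta_m\in\GL_1(\mathbb Q(\zeta_m))$ is a torsion element of order $m$ for every $m$, so no bound depending on $n$ alone exists. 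Without a valid bound on admissible orders, the concluding step ``choose $\mathfrak m$ with residue characteristic $p>C(n)$'' has nothing to aim for.

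The gap is repairable without changing your architecture. One fix replaces the false uniform bound by a bound depending on $R$: since $F=\mathrm{Frac}(R)$ is finitely generated over $\mathbb Q$, the algebraic closure of $\mathbb Q$ inside $F$ is a number field of some degree $D$, and if a primitive $m$-th root of unity is an eigenvalue of some $A\in\GL_n(R)$ then $[F(\zeta_m):F]\le n$ together with a Galois-theoretic comparison forces $\phi(m)\le nD$; one then picks $\mathfrak m$ of residue characteristic larger than this bound. A cleaner fix avoids bounding orders altogether: your cyclotomic computation already shows that any prime-order torsion element of $K_{\mathfrak m}$ must have order equal to the residue characteristic of $\mathfrak m$, so by taking two maximal ideals $\mathfrak m_1,\mathfrak m_2$ with distinct residue characteristics $p_1\neq p_2$ (available by the same finiteness argument you use to realize a given large prime), the finite-index subgroup $H\cap K_{\mathfrak m_1}\cap K_{\mathfrak m_2}$ can contain torsion only of order a power of both $p_1$ and $p_2$, hence none.
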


\begin{lemma}\label{deal_with_elliptic}
Let $\Gamma<G$ be a discrete finitely generated subgroup. 
Then there is a constant $l_0$, depending only on $\Gamma$, such that if $d(\gamma g,g)<l_0$ for some $\gamma\in \Gamma$ elliptic and $g\in G$, then $\gamma=e$.
\end{lemma}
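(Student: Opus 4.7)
The plan is to apply Selberg's Lemma to bound the orders of all elliptic elements of $\Gamma$, and then convert this bound into a uniform lower bound on $d(\gamma g, g)$ via the left-invariance of $d$ and continuity of the spectrum on $G$.

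Since $\Gamma \subset G \subset \GL_{\mathtt{d}+1}(\mathbb{R})$ is finitely generated, Proposition~\ref{Selberg's Lemma} provides a torsion-free subgroup of finite index; intersecting its finitely many $\Gamma$-conjugates, I may assume it is normal, and denote it $\Gamma' \triangleleft \Gamma$ with $N = [\Gamma:\Gamma']$. Any elliptic $\gamma \in \Gamma$ fixes a point in $\mathbb{H}^{\mathtt{d}}$, so lies in a compact stabilizer in $G$, which meets the discrete group $\Gamma$ in a finite subgroup; hence $\gamma$ has finite order. Since $\gamma^{N} \in \Gamma'$ and $\Gamma'$ is torsion-free, the order of every elliptic element of $\Gamma$ must divide $N$.

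A non-identity elliptic $\gamma \in \Gamma$ is thus semisimple with eigenvalues that are $N$-th roots of unity, and at least one of its eigenvalues $\lambda$ differs from $1$, so $|\lambda - 1| \geq 2\sin(\pi/N) > 0$. The metric $d$ on $G \cong \mathcal{F}\mathbb{H}^{\mathtt{d}}$ is left-invariant (by $\Isom^{+}\mathbb{H}^{\mathtt{d}}$-invariance, since $\Isom^{+}\mathbb{H}^{\mathtt{d}}$ acts on the frame bundle by left multiplication in this identification), so
\begin{equation*}
d(\gamma g, g) = d(g^{-1}\gamma g, e).
\end{equation*}
The conjugate $g^{-1}\gamma g$ has the same spectrum as $\gamma$, and in particular an eigenvalue at distance at least $2\sin(\pi/N)$ from $1$.

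The remaining step is to verify that the closed set
\begin{equation*}
S = \bigl\{h \in G : h \text{ has an eigenvalue } \lambda \text{ with } |\lambda - 1| \geq 2\sin(\pi/N)\bigr\}
\end{equation*}
is bounded away from $e$ in $d$. Since $d$ induces the usual Lie group topology on $G$ (a sequence $h_n \to e$ in $d$ forces convergence of all base points and frame vectors to those of $e$), and the spectrum depends continuously on the element, any sequence $h_n \in S$ with $h_n \to e$ would yield eigenvalues tending to $1$, contradicting $h_n \in S$. Hence $l_0 := d(e, S) > 0$ depends only on $N$, and therefore only on $\Gamma$, completing the argument. The main obstacle, modest as it is, is this final compactness/continuity step translating spectral information into the geometric metric $d$; the rest of the proof is structural, hinging on the combination of Selberg's Lemma and the left-invariance of $d$.
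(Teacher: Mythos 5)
Your proof is correct and follows essentially the same strategy as the paper's: Selberg's Lemma bounds the order of torsion, a conjugation-invariant quantity (you use eigenvalues, the paper uses $\mathtt{d} - \tr$) separates nontrivial bounded-order elements from the identity, and left-invariance of $d$ together with continuity of that invariant converts this into a uniform lower bound on $d(\gamma g, g)$. The only cosmetic differences are that you pass to a normal torsion-free subgroup (the paper simply notes the order is bounded by the index of the not-necessarily-normal subgroup) and that you work with the full spectrum rather than the single scalar $\tr$; neither changes the substance.
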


\begin{proof}
Due to Selberg's Lemma, $\Gamma$ has a finite index torsion-free subgroup $\Gamma_0<\Gamma$. It follows that the order of every elliptic element of $\Gamma$ is at most $[\Gamma:\Gamma_0]$.

We treat $\Isom^{+}{\mathcal{H}^{\mathtt{d}}}$ as the group of orientation-preserving Moebius transformations of $\mathbb{R}^{\mathtt{d}}$ which preserve the unit ball $\mathbb{B}^{\mathtt{d}}$. When done so, it can be shown \cite[Section 3]{gehring2017introduction} that every elliptic element $\gamma\in\Gamma$ is conjugated to an orthogonal map $T$. We use the canonical form of orthogonal matrices as the (orthogonal) conjugate of a matrix of the form
\begin{equation*}
\begin{pmatrix}R_{1} & & & & &\\
 & \ddots & & & &\\
 &  & R_{k} & & &\\
 &  &  & \pm1 & &\\
 &  &  &  & \ddots &\\
 &  &  &  &  & \pm1
\end{pmatrix}
\end{equation*}
where $R_1,\ldots,R_k$ are $2\times2$ rotation matrices. 

Since the order of $T$ (as an element of the group of orthogonal matrices) is bounded, it is clear from the canonical form that there is some constant (depending only on the bound of the order) $c_0>0$ such that if $T\not=\Id$ then $|\mathtt{d}-\tr T|>c_0$, where $\tr T$ is the trace of $T$. 
Since the trace function is continuous, is invariant under conjugation, and of course satisfies $\mathtt{d}=\tr\Id$, it follows that there is some constant $l_0>0$ such that $d(\gamma g,g)=d(g^{-1}\gamma g,e)>l_0$, unless $\gamma=e$. \end{proof}

\begin{remark}
By \cite[Proposition 3.1.6]{bowditch1993geometrical}, geometrically finite subgroups $\Gamma<G$ are finitely generated, and so Lemma \ref{deal_with_elliptic} holds for such groups as well.
\end{remark}

\begin{remark}\label{loxodromic_remark}
For geometrically finite groups $\Gamma<G$ there is a constant $l_1>0$ (which depends only on $\Gamma$) such that $d(x,gx)>l_1$ for all $x\in\mathbb{H}^{\mathtt{d}}$ and all loxodromic elements $g\in\Gamma$. This fact is related to the fact that in $\Gamma\backslash\mathbb{H}^{\mathtt{d}}$ there are only finitely many closed geodesics up to length $T$, for any $T>0$. So for such groups we will assume that the constant $l_0$ from Lemma \ref{deal_with_elliptic} satisfies $l_0\leq l_1$. By doing so, it follows that only parabolic elements can move points by a positive arbitrarily small distance.
\end{remark}

\subsection{The Main Lemma}\label{main lemma section}
\label{page_of_going_up_in_the_cusp}
For the rest of this section, assume $\Gamma<G$ is a non-elementary geometrically finite subgroup.

Let us first define the notion of a frame $\Gamma z\in \mathcal{F}\cusp_{\epsilon}(\xi)$ going up or down in the cusp, for some bounded parabolic fixed point $\xi$. Without loss of generality, assume $\xi=\infty$ and $\pi_{K}(z)\in T_{\epsilon}(\Gamma_{\infty})$.
It can be shown \cite[Section 3]{gehring2017introduction} that every $\gamma\in\Isom{\mathbb{H}^{\mathtt{d}}}$ fixing the point $\xi=\infty$ acts on $\mathbb{H}^{\mathtt{d}}$ by $\gamma x=\beta Ax+x^{0}$ for $0<\beta\in\mathbb{R}$, $x^{0}=(x_{1}^{0},\ldots,x_{\mathtt{d}-1}^{0},0)^{T}\in\mathbb{R}^{\mathtt{d}}$ and $A=\begin{pmatrix}B & 0\\
0 & 1
\end{pmatrix}$ for $B\in O(\mathtt{d}-1)$.
It follows that if the first vector of the frame $z$, i.e.\ the vector that determines the geodesic direction, points towards the point $\infty$ (upwards), i.e.\ $z$ is going in the upwards side of the geodesic semi-circle defined by it, then the same holds for $\gamma z$ for all $\gamma\in \Gamma_{\infty}$. In this case we say that $\Gamma z$ is \textbf{going up in the cusp}, and down in the cusp if otherwise.\\

Before getting into the main lemma of this paper, we need some technical lemmas, which describe the structure of the cusp and calculate the distance between
the cusp and the compact parts of $\Omega_{\mathcal{F}}$. In some sense, these follow from the fact that $G$ is a rank $1$ Lie group. We give sketches of the proofs, and leave the rest of the details to be filled by the reader.

\begin{lemma}\label{size_of_cusps} There are constants $c_{1},c_{2},\tilde{\epsilon}_{\mathtt{d}},\epsilon_{\mathtt{d}}^{\prime},t_0>0$ which depend only on $\Gamma$ and satisfy $\epsilon_{\mathtt{d}}^{\prime}<\tilde{\epsilon}_{\mathtt{d}}<\epsilon_{\mathtt{d}}$,
such that for all bounded parabolic fixed
points $\xi$ and for all $0<\epsilon\leq\epsilon_{\mathtt{d}}^{\prime}$: 
\begin{enumerate}
\item\label{size_of_cusps: item 1}
If $z\in\pi_{\Gamma}^{-1}(\Omega_{\mathcal{F}})\cap \mathcal{F}(T_{\epsilon}(\Gamma_{\xi}))$
then $za^{n}\in\mathcal{F}(T_{\tilde{\epsilon}_{\mathtt{d}}}(\Gamma_{\xi}))$
for all $n\in\mathbb{Z}$ with $|n|\leq\lceil|\log c_{1}\epsilon|\rceil$.
\item\label{size_of_cusps: item 2}
If $\Gamma z\in\Omega_{\mathcal{F}}\smallsetminus \mathcal{F}\cusp_{\epsilon}(\xi)$
then there is $n\in\mathbb{Z}$ with $|n|\leq\lceil|\log c_{2}\epsilon|\rceil$
such that $\Gamma za^{n}\not\in\mathcal{F}\cusp_{\tilde{\epsilon}_{\mathtt{d}}}(\xi)$.
\item\label{size_of_cusps: item 3} Assume $\Gamma z\in\Omega_{\mathcal{F}}\smallsetminus \mathcal{F}\cusp_{\epsilon}(\xi)$ is going down (resp.\ up) in the cusp and $\Gamma za^{-1}$ (resp.\
$\Gamma za$) is in $\mathcal{F}\cusp_{\epsilon}(\xi)$. Then
\begin{enumerate}
    \item $\Gamma za^{n}\not\in \mathcal{F}\cusp_{\epsilon_{\mathtt{d}}^{\prime}}(\xi)$, for $|n|=\lceil|\log c_{1}\epsilon|\rceil-1$ with $n>0$ (resp.\ $n<0$).
    \item $\Gamma za^{n}\not\in \mathcal{F}\cusp_{\epsilon}(\xi)$ for all $t_0\leq |n|\leq \lceil |\log c_1\epsilon|\rceil$ with $n>0$ (resp.\ $n<0$).
\end{enumerate}
\end{enumerate}
\end{lemma}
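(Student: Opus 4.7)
The plan is as follows. I would begin by conjugating so that $\xi=\infty$ and working in the upper half-space model. The bounded-parabolic hypothesis on $\xi$ means that $\Gamma_{\infty}$ has a finite-index free abelian subgroup acting by translations cocompactly on an affine subspace $L\subset\mathbb{R}^{\mathtt{d}-1}$; combined with the standard displacement estimate $d_{\mathbb{H}^{\mathtt{d}}}(\gamma x,x)\approx \|v_{\gamma}\|/x_{\mathtt{d}}$ for $\gamma$ translating by $v_{\gamma}\in L$, this identifies $T_{\epsilon}(\Gamma_{\infty})$ (up to $\Gamma_{\infty}$-action, which preserves height) with the horoball $\{x:x_{\mathtt{d}}\geq c_{\xi}/\epsilon\}$ for a constant $c_{\xi}$ depending only on $\Gamma$. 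The $\Gamma$-translates $T_{\epsilon}(\Gamma_{\eta})=gT_{\epsilon}(\Gamma_{\infty})$ for $\eta=g\cdot\infty$ with $g\in\Gamma$ are horoballs at finite $\eta\in\mathbb{R}^{\mathtt{d}-1}$ of Euclidean diameter $\ll \epsilon$, with the implicit constant uniform over $\eta\in\Gamma\cdot\infty$ by geometric finiteness and Theorem~\ref{thin_part}.

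The workhorse is a simple height estimate: along any geodesic $x(t)$ in $\mathbb{H}^{\mathtt{d}}$, parametrized as $x_{\mathtt{d}}(t)=R\,\mathrm{sech}(t-t_{\max})$ for semicircular geodesics (or $x_{\mathtt{d}}(t)=x_{\mathtt{d}}(0)e^{\pm t}$ for vertical geodesics), the logarithm of the height is $1$-Lipschitz, giving the two-sided bound $x_{\mathtt{d}}(t)e^{-|s|}\leq x_{\mathtt{d}}(t+s)\leq x_{\mathtt{d}}(t)e^{|s|}$. In the direction past the geodesic's apex the sharper one-sided estimate $x_{\mathtt{d}}(t+s)\leq 2\,x_{\mathtt{d}}(t)e^{-s}$ holds for $s\geq 0$.

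With these in hand, item (\ref{size_of_cusps: item 1}) is immediate: if $x_{\mathtt{d}}(0)\geq c_{\xi}/\epsilon$, the lower estimate gives $x_{\mathtt{d}}(n)\geq c_{\xi}c_{1}$ for all $|n|\leq\lceil|\log c_{1}\epsilon|\rceil$, which is at least $c_{\xi}/\tilde\epsilon_{\mathtt{d}}$ provided $c_{1}\geq 1/\tilde\epsilon_{\mathtt{d}}$. For item (\ref{size_of_cusps: item 2}), I would use the non-wandering hypothesis to flow $\tilde z$ in the height-decreasing direction (such a direction exists, as otherwise $\tilde z$ sits at the apex of its geodesic), apply the sharper one-sided estimate, and find some $|n|=O(|\log\epsilon|)$ with $x_{\mathtt{d}}(n)<c_{\xi}/\tilde\epsilon_{\mathtt{d}}$; the Margulis Lemma (Theorem~\ref{thin_part}) then rules out membership of the flowed base point in any other $T_{\tilde\epsilon_{\mathtt{d}}}(\Gamma_{\eta})$.

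The technical heart is item (\ref{size_of_cusps: item 3}). Part (a) proceeds analogously: the hypothesis $\Gamma za^{-1}\in\mathcal{F}\cusp_{\epsilon}(\xi)$ together with $\Gamma z\notin\mathcal{F}\cusp_{\epsilon}(\xi)$ forces $x_{\mathtt{d}}(0)\in[c_{\xi}/(e\epsilon),c_{\xi}/\epsilon)$ via $1$-Lipschitzness on the backward unit step, and flowing for time $\lceil|\log c_{1}\epsilon|\rceil-1$ downward brings $x_{\mathtt{d}}$ to a constant strictly below $c_{\xi}/\epsilon_{\mathtt{d}}^{\prime}$ provided $\epsilon_{\mathtt{d}}^{\prime}$ is chosen small. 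The main obstacle I anticipate is part (b), which requires preventing re-entry into any $T_{\epsilon}(\Gamma_{\eta})$ throughout the interval $[t_{0},\lceil|\log c_{1}\epsilon|\rceil]$: for the translate at $\infty$, monotonicity of $x_{\mathtt{d}}$ in the downward direction suffices once $|n|\geq t_{0}=\log 2$; for the remaining translates, the lower estimate gives $x_{\mathtt{d}}(|n|)\geq x_{\mathtt{d}}(0)e^{-|n|}\geq c_{\xi}c_{1}/e$ throughout the range, a constant that comfortably exceeds the $O(\epsilon)$ Euclidean diameter of each $T_{\epsilon}(\Gamma_{\eta})$ once $\epsilon$ is sufficiently small. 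The delicate step is making the $O(\epsilon)$ diameter bound uniform in $\eta\in\Gamma\cdot\infty$, which I expect to extract from the Margulis Lemma together with the precise $\Gamma_{\eta}$-invariance of each $T_{\epsilon}(\Gamma_{\eta})$.
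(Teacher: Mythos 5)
Your proposal follows essentially the same route as the paper: replace $T_\epsilon(\Gamma_\xi)$ on the convex core by a horoball of height $\asymp 1/\epsilon$ (the paper's Equation~(3.1)) and run elementary height estimates along the $a_\bullet$-trajectory ($\log x_{\mathtt{d}}$ is $1$-Lipschitz along geodesics, with exponential decay past the apex). The estimates you write down are correct, and the structure of the argument for items~(1)--(3) is sound. Two small corrections, both in item~(3b). First, you attribute the uniform $O(\epsilon)$ Euclidean diameter bound for the translated horoballs $T_\epsilon(\Gamma_\eta)$, $\eta\in\Gamma\cdot\infty\smallsetminus\{\infty\}$, to the Margulis Lemma together with precise invariance; but Theorem~\ref{thin_part} and precise invariance give disjointness and a lower bound on pairwise distances, not an upper bound on size. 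What actually yields the uniform diameter bound (supremum over $\eta$ modulo $\Gamma_\infty$) is the bounded parabolic hypothesis, i.e.\ compactness of $\Gamma_\xi\backslash(\Lambda(\Gamma)\smallsetminus\{\xi\})$ — the same input that produces the constant $c_\Gamma$ in the paper's Equation~(3.1). Second, $T_\epsilon(\Gamma_\xi)\cap\hull(\Lambda(\Gamma))$ is only sandwiched between two horoballs whose heights differ by a fixed multiplicative factor (due to the constants $c_\Gamma$, $N_{\mathtt{d}}$ in~(3.1) and the elliptic parts of elements of $\Gamma_\xi$), so it is not radially monotone on the nose; hence $t_0=\log 2$ is too optimistic, and $t_0$ must absorb the logarithm of that factor, together with the separation $\frac{\epsilon_{\mathtt{d}}-\epsilon}{2}$ supplied by Theorem~\ref{thin_part}. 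Neither point changes the strategy — they just fix the sources of the two constants.
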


\begin{proof}
We use the upper-half space model, and assume by conjugation of $\Gamma$
that $\xi=\infty$. Recall that $\mathbb{R}^{\mathtt{d}-1}$ can be decomposed into a
product $\mathbb{R}^{\mathtt{d}-1}=\mathbb{R}^{k}\times\mathbb{R}^{\mathtt{d}-k-1}$
where $\rank(\infty)=k$ and $\mathbb{R}^{k}$ is a $\Gamma_{\infty}$-invariant
subspace such that $\Gamma_{\infty}\backslash\mathbb{R}^{k}$ is compact. Let $l_{\Gamma}>0$ be the minimal length of a translation in $\Gamma_{\infty}$. Combining the definition of the thin part of $\Gamma\backslash\mathbb{H}^{\mathtt{d}}$
with the analysis of the possible types of parabolic isometries
in $\Gamma_{\infty}$ as in \cite[Section 3.3]{apanasov2000conformal}, we conclude
that
\begin{align*}
\Gamma_{\infty}\backslash\left\{x\in\mathbb{H}^{\mathtt{d}}:\:x_{\mathtt{d}}\geq\frac{l_{\Gamma}\sqrt{1+4\|\frac{1}{l_{\Gamma}}(x_{k+1},\ldots,x_{\mathtt{d}-1})\|^{2}}}{2\sinh\frac{\epsilon}{2}}\right\}&\subset\cusp_{\epsilon}(\infty)\\
&\hspace{-1.5cm}\subset\Gamma_{\infty}\backslash\left\{x\in\mathbb{H}^{n}:\:x_{\mathtt{d}}\geq\frac{l_{\Gamma}}{2\sinh\frac{N_{\mathtt{d}}\epsilon}{2}}\right\}
\end{align*}
for all small enough $\epsilon>0$, and for some fixed $N_{\mathtt{d}}>0$.

Since $\infty$ is a bounded parabolic fixed point (see p.~\pageref{page of bounded parabolic}), there is a constant $c_{\Gamma}$ such that
\begin{align}\label{how_cusps_look_eq}
\begin{split}
\Gamma_{\infty}\backslash\left\{x\in \hull(\Lambda(\Gamma)):\: x_{\mathtt{d}}\geq\frac{c_{\Gamma}l_{\Gamma}}{2\sinh\frac{\epsilon}{2}}\right\}&\subset
\cusp_{\epsilon}(\infty)\cap\core(X)\\&\subset\Gamma_{\infty}\backslash\left\{x\in \hull(\Lambda(\Gamma)):\:x_{\mathtt{d}}\geq\frac{l_{\Gamma}}{2\sinh\frac{N_{\mathtt{d}}\epsilon}{2}}\right\}
\end{split}
\end{align}

Item \ref{size_of_cusps: item 1} follows directly from Equation \eqref{how_cusps_look_eq}, together with:
\begin{enumerate}
    \item The action of $\Gamma_{\infty}$ by $\gamma x=\beta Ax+x^0$ as described in the beginning of \S\ref{main lemma section}, where $\beta=1$ for parabolic and elliptic elements.
    \item The fact that discrete subgroups cannot contain both parabolic and loxodromic elements with a common fixed point \cite{bowditch1993geometrical}, so $\Gamma_{\infty}$ contains only parabolic and elliptic elements.
\end{enumerate}

For item \ref{size_of_cusps: item 2}, if $\Gamma z\not\in\mathcal{F}\cusp_{\tilde{\epsilon}_{\mathtt{d}}}(X)$ there is nothing to show, so we assume that $\pi_{K}(z)\in T_{\tilde{\epsilon}_{\mathtt{d}}}(\Gamma_{\infty})$.
Either in the future, if $\Gamma z$ goes down in the cusp, or in the past if $z$ goes up, the trajectory $\{\Gamma za^{n}\}_{n\in\mathbb{Z}}$ visits $\Omega_{\mathrm{nc}}^{\tilde{\epsilon}_{\mathtt{d}}}$, so there is an integer $n\in\mathbb{Z}$ with minimal absolute value such that $\Gamma za^{n}\in\Omega_{\mathrm{nc}}^{\tilde{\epsilon}_{\mathtt{d}}}$. This is of
course true if the non-wandering parts of the cusps of $X$ are distant enough from each other
(more than one unit), i.e.\ if $\tilde{\epsilon}_{\mathtt{d}}$ is small enough. Without loss of generality, $y=za^{n}$ and $z$ are on the same side of
the geodesic semi-circle defined by $z$, and $\pi_{K}(y)_{\mathtt{d}}<\pi_{K}(z)_{\mathtt{d}}$.

Consider the following path in $\mathbb{H}^{\mathtt{d}}$. First
we connect by a straight line the points $\pi_{K}(y)$ and $w$, where $w$ is the point right above $\pi_{K}(y)$ whose $\mathtt{d}$'th
coordinate is equal to $\pi_{K}(z)_{\mathtt{d}}$.
Then we draw a straight horizontal line (with the same $\mathtt{d}$'th
coordinate) from $w$ to a point which is $(\pi_{K}(z))_{\mathtt{d}}$ far in
the Euclidean direction $\overrightarrow{w\pi_{K}(z)}$. This path starts at $\pi_K(y)$ and passes through $\pi_K(z)$, and is of length $\log(\frac{\pi_{K}(z)_{\mathtt{d}}}{\pi_{K}(y)_{\mathtt{d}}})+1\geq n$. To conclude item \ref{size_of_cusps: item 2}, use Equation \eqref{how_cusps_look_eq} again.

Item \ref{size_of_cusps: item 3}, follows similarly.\end{proof}
\begin{remark}
For most of the paper, $\tilde{\epsilon}_{\mathtt{d}}$ will serve as a replacement for the Margulis constant $\epsilon_{\mathtt{d}}$, which is just slightly too large for some technical reasons as it is the marginal constant in Theorem \ref{thin_part}.
\end{remark}
\begin{remark}
Lemma \ref{size_of_cusps} shows in particular that the choice of $T_{\epsilon}(\Gamma_{\xi})$ as the cusp neighborhoods is compatible with the frame flow, in the sense that up to small ``fluctuations'' in time intervals of size at most $t_0$, the notion of ``going up in the cusp'' as defined in p.~\pageref{page_of_going_up_in_the_cusp} is the same as the notion of moving to regions $T_{\epsilon}(\Gamma_{\xi})$ with smaller $\epsilon$. This statement could be made precise, by considering the map $a_{t_0}$ instead of $a_1$.
\end{remark}
\begin{remark}\label{log_distance_remark}
It follows from Equation \eqref{how_cusps_look_eq}, by the arguments of Lemma \ref{size_of_cusps}, that the distance between the regions $\Omega_{\mathrm{c}}^{\epsilon_1}$ and $\Omega_{\mathrm{nc}}^{\epsilon_2}$, for $\epsilon_1\ll\epsilon_2$ small enough, is approximately $\log\frac{\epsilon_2}{\epsilon_1}$, up to an additive constant.
\end{remark}

We need another related lemma, regarding the volume of the thick part
of a hyperbolic orbifold. Here $B_{\eta}^{G}$ stands for the radius $\eta$ ball around the identity $e\in G$.
\begin{lemma}\label{number_of_balls}For all $\epsilon,\eta>0$ small enough, $\Omega_{\mathrm{nc}}^{\epsilon}$ can be covered by $N_{0}\ll_{\eta}|\log\epsilon|$
balls $\Gamma O_{i}$, for $O_{i}=k_{i}B_{\eta}^{G}$ and
$k_{i}\in G$.
Moreover, $\{O_i\}_{i=1}^{N_0}$ may be chosen so that:
\begin{enumerate}
\item Let $R$ be a set of representatives for the $\Gamma$-orbits of bounded parabolic fixed points of $\Gamma$. Then $O_i\cap \mathcal{F}T_{\tilde{\epsilon}_{\mathtt{d}}}(\Gamma_{\xi})=\emptyset$ for all bounded parabolic fixed points $\xi\not\in R$.
\item $d(o,g)<r_0$ for all $g\in O_{i}$ such that $\Gamma g\in\Omega_{\mathrm{nc}}^{\epsilon_{\mathtt{d}}^{\prime}}$,
for some reference point $o$ satisfying $\Gamma o\in\Omega_{\mathrm{nc}}^{\tilde{\epsilon}_{\mathtt{d}}}$ and some constant $r_{0}$ which depends only on $\Gamma$.
\end{enumerate}
\end{lemma}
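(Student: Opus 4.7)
The plan is to split $\Omega_{\mathrm{nc}}^{\epsilon}$ into a compact inner region $\Omega_{\mathrm{nc}}^{\epsilon_{\mathtt{d}}^{\prime}}$ and, for each representative bounded parabolic fixed point $\xi\in R$, a ``deep cusp slab'' consisting of points of $\Omega_{\mathcal{F}}$ projecting into $T_{\epsilon_{\mathtt{d}}^{\prime}}(\Gamma_{\xi})\smallsetminus T_{\epsilon}(\Gamma_{\xi})$. By geometric finiteness, $R$ is finite and $\Omega_{\mathrm{nc}}^{\epsilon_{\mathtt{d}}^{\prime}}\subset \mathcal{F}\core_{\epsilon_{\mathtt{d}}^{\prime}}(X)$ is compact (Definition~\ref{geom-finite-def}), so it is covered by a bounded number $O_\eta(1)$ of $\eta$-balls whose specific lifts $k_i\in G$ may be chosen within a uniform distance $r_0$ of a fixed reference frame $o$ with $\Gamma o\in\Omega_{\mathrm{nc}}^{\tilde\epsilon_{\mathtt{d}}}$. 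The main task is to cover each deep slab by $O_\eta(|\log\epsilon|)$ balls.

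Fix $\xi\in R$ and conjugate so that $\xi=\infty$. By Equation~\eqref{how_cusps_look_eq}, the lift of the deep slab is contained in $\bigl\{x\in\hull(\Lambda(\Gamma)):\ C_1/\sinh(N_{\mathtt{d}}\epsilon/2)\leq x_{\mathtt{d}}\leq C_2/\sinh(\epsilon_{\mathtt{d}}^{\prime}/2)\bigr\}$, a range of hyperbolic height comparable to $|\log\epsilon|$. Boundedness of the parabolic point confines $\Lambda(\Gamma)$ to a Euclidean neighborhood of the minimal $\Gamma_{\infty}$-invariant affine subspace $L_0$; since a geodesic joining two limit points at Euclidean distance $D$ rises to height at most $D/2$, the convex hull at height $x_{\mathtt{d}}=h$ lies in an $O(h)$-Euclidean neighborhood of $L_0$. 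As $\Gamma_{\infty}$ acts cocompactly by translations on $L_0$, a $\Gamma_{\infty}$-fundamental domain in the horospherical cross-section $\{x_{\mathtt{d}}=h\}\cap\hull(\Lambda(\Gamma))$ has Euclidean diameter $O(h)$ and thus \emph{hyperbolic} horospherical diameter $O(1)$, uniformly in $h$. Stacking horospherical slices of hyperbolic thickness $\eta$ yields $O_\eta(|\log\epsilon|)$ $\eta$-balls per cusp, and summing over the finitely many $\xi\in R$ gives $N_0\ll_\eta|\log\epsilon|$.

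It remains to verify the two auxiliary properties. For (1), choose each deep-slab ball for cusp $\xi\in R$ to have its center $k_i$ in $\mathcal{F}T_{\tilde\epsilon_{\mathtt{d}}}(\Gamma_{\xi})$; by the precise $\Gamma_{\xi}$-invariance of this set (p.~\pageref{page of precisely invariant}) and the uniform separation $\tfrac{\epsilon_{\mathtt{d}}-\tilde\epsilon_{\mathtt{d}}}{2}$ between distinct thin-part components from Theorem~\ref{thin_part}, taking $\eta$ small enough forces $O_i$ to avoid every $\mathcal{F}T_{\tilde\epsilon_{\mathtt{d}}}(\Gamma_{\xi'})$ with $\xi'\notin R$. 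For (2), the balls covering the compact piece $\Omega_{\mathrm{nc}}^{\epsilon_{\mathtt{d}}^{\prime}}$ are by construction within $r_0$ of $o$, while the deep-slab balls, for $\eta$ small, lie entirely in $\mathcal{F}T_{\epsilon_{\mathtt{d}}^{\prime}}(\Gamma_{\xi})$ and hence project outside $\Omega_{\mathrm{nc}}^{\epsilon_{\mathtt{d}}^{\prime}}$, so condition (2) is vacuous for them. The main technical obstacle is precisely the uniform-in-height bound on the horospherical diameter of a $\Gamma_{\infty}$-fundamental domain inside $\hull(\Lambda(\Gamma))$; this is the step where bounded parabolicity (hence the geometrically finite assumption) is indispensable.
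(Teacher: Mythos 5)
Your decomposition—compact part plus deep cusp slabs, bounded cross-sectional diameter from bounded parabolicity and cocompactness of $\Gamma_\xi$ on $L_0$, and stacking $\ll|\log\epsilon|$ horospherical slices—is the same strategy the paper uses (the paper phrases the cross-section bound via a fundamental-domain box bounded in the $\mathtt{d}-1$ Euclidean boundary directions, which is the same content). However, the derivation of your central transverse bound does not hold up. You assert that ``since a geodesic joining two limit points at Euclidean distance $D$ rises to height at most $D/2$, the convex hull at height $x_{\mathtt{d}}=h$ lies in an $O(h)$-Euclidean neighborhood of $L_0$.'' The height-of-a-semicircle fact controls how high geodesics rise; it says nothing about their transverse Euclidean displacement from $L_0$, so the asserted $O(h)$ bound is a non sequitur. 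The correct (and stronger) statement, which is what the paper uses, is that $\hull(\Lambda(\Gamma))$ stays within a \emph{uniformly bounded}, i.e.\ $O(1)$, Euclidean distance of $L_0\times\mathbb{R}_{>0}$: by bounded parabolicity $C:=\sup_{\zeta\in\Lambda\smallsetminus\{\infty\}}d_{\euc}(\zeta,L_0)<\infty$, and for every $a$ Euclidean-orthogonal to $L_0$ the vertical half-space $\{x:\ a\cdot x\leq C|a|\}$ contains $\Lambda$ and is hyperbolically convex, hence contains $\hull(\Lambda)$; intersecting over all such $a$ gives the $C$-tube around $L_0\times\mathbb{R}_{>0}$. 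With this replacing the $O(h)$ step, your stacking argument goes through verbatim.

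Two further small points. First, you verify condition (1) only for the deep-slab balls; the compact-part balls could, as you have set things up (slicing at height $\epsilon_{\mathtt{d}}^{\prime}<\tilde\epsilon_{\mathtt{d}}$), still intersect $\mathcal{F}T_{\tilde\epsilon_{\mathtt{d}}}(\Gamma_{\xi'})$ for $\xi'\notin R$. The paper avoids this by covering $\core_{\epsilon^{\prime}}(X)$ for an intermediate $\epsilon^{\prime}$ with $\tilde\epsilon_{\mathtt{d}}<\epsilon^{\prime}<\epsilon_{\mathtt{d}}$, so the compact part sits at a positive distance from every $T_{\tilde\epsilon_{\mathtt{d}}}$-region and $\eta$ small enough finishes the job; either do the same, or argue separately that the lifts of the compact-part balls can be chosen away from those regions. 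Second, the slab bounds you wrote are inverted: the deep slab for $T_{\epsilon_{\mathtt{d}}^{\prime}}(\Gamma_\xi)\smallsetminus T_{\epsilon}(\Gamma_\xi)$ should have $x_{\mathtt{d}}$ roughly between $1/\sinh(\epsilon_{\mathtt{d}}^{\prime}/2)$ (lower) and $1/\sinh(\epsilon/2)$ (upper), since smaller $\epsilon$ corresponds to greater height.
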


\begin{proof}
We will give a covering of $\core_{\epsilon}(X)$ by $X$-balls $B(\Gamma z,\eta)$, rather than a covering of $\Omega_{\mathrm{nc}}^{\epsilon}$ (which is a subset of $\mathcal{F}\core_{\epsilon}(X)$).
To obtain a covering of $\Omega_{\mathrm{nc}}^{\epsilon}$ one only needs to choose $\ll_{\eta}1$ many frames $\{\Gamma g_i\}_{i\in I}$ with $\pi_{K}(\Gamma g_i)\in B(\Gamma z,\eta)$, such that $$\mathcal{F}B(\Gamma z,\eta)\subset\bigcup_{i\in I}B(\Gamma g_i,\eta).$$ This is easily done using compactness of $K$ and the definition of the metrics as in \S\ref{flow section}.
Since we only want to bound the number of balls
up to a multiplicative constant, this will suffice.

Recall, once again, the structure of the
cusps. For simplicity assume that $\xi=\infty$ is a bounded parabolic
fixed point in the upper half-space model. The boundary $\mathbb{R}^{\mathtt{d}-1}\subset\partial\mathbb{H}^{\mathtt{d}}$
decomposes into $\mathbb{R}^{\mathtt{d}-1}=\mathbb{R}^{k}\times\mathbb{R}^{\mathtt{d}-k-1}$
where $\rank(\infty)=k$ and $\mathbb{R}^{k}$ is a $\Gamma_{\infty}$-invariant
subspace such that $\Gamma_{\infty}\backslash\mathbb{R}^{k}$ is
compact.

As $\Gamma_{\infty}\backslash\mathbb{R}^{k}$ is compact, there is a subset $D_{\infty}\subset\mathbb{H}^{\mathtt{d}}$
which contains representatives of all elements in $X=\Gamma\backslash\mathbb{H}^{\mathtt{d}}$,
and is bounded in the directions defined by $\mathbb{R}^{k}$. As $\infty$ is a bounded parabolic fixed point (see p.~\pageref{page of bounded parabolic}), the distance $d_{\euc}(\zeta,\mathbb{R}^{k})$
is bounded for $\zeta\in\Lambda(\Gamma)\smallsetminus\{\infty\}$, and so
the part of $D_{\infty}$ corresponding to $\core(X)$
is bounded as well in the remaining $\mathtt{d}-k-1$ directions defined
by $\mathbb{R}^{\mathtt{d}-k-1}$. 

In other words, we have simply created a $\mathtt{d}$-dimensional box, bounded in $\mathtt{d}-1$ directions, which contains a fundamental domain for $\core(X)$. For $\epsilon,\epsilon^\prime$ small enough, which satisfy $\epsilon<\tilde{\epsilon}_{\mathtt{d}}<\epsilon^{\prime}<\epsilon_{\mathtt{d}}$, the part of the fundamental domain corresponding to $T_{\epsilon^\prime}(\Gamma_\infty)\smallsetminus T_{\epsilon}(\Gamma_\infty)$ is unbounded only in the upwards direction $e_{\mathtt{d}}$, where it is of hyperbolic length $\ll|\log\epsilon|$ due to Lemma \ref{size_of_cusps}.
We make note that since the $\mathtt{d}$'th coordinate of a point
in $T_{\epsilon^\prime}(\Gamma_{\infty})$ is bounded from below
by Equation \eqref{how_cusps_look_eq}, boundedness also implies that the
hyperbolic diameter in these directions is bounded from above (since
$ds^2=\frac{\|dx\|^2}{x_{\mathtt{d}}^2}$).
Therefore, we can cover
$$\hull(\Lambda)\cap (T_{\epsilon^\prime}(\Gamma_\infty)\smallsetminus T_{\epsilon}(\Gamma_\infty))$$ by $\ll_{\eta}|\log\epsilon|$ many $\eta$-balls. Each such ball, under the quotient by
$\Gamma$, is of the desired form.

As $\Gamma$ is geometrically finite, its bounded parabolic fixed
points consist of a finite number of $\Gamma$-orbits. So we may repeat this argument for each $\xi\in R$, where $R$ is a set of representatives for these orbits.
Moreover, we can choose a cover of the compact set $\core_{\epsilon^\prime}(X)$ by $\ll_{\eta} 1$ balls $\{\Gamma O_i\}_{i\in I}$, where we choose $\{O_i\}_{i\in I}$ to be contained in some compact subset of $\mathbb{H}^{\mathtt{d}}$.
This would yield a cover of $\core_{\epsilon}(X)$ of the desired size.

Both requirements on $k_i$ as in the statement of Lemma \ref{number_of_balls} are trivially satisfied, perhaps after making sure (in Lemma \ref{size_of_cusps}) that $\tilde{\epsilon}_{\mathtt{d}}$ is small enough with respect to $\epsilon^\prime$, and by choosing $\eta$ to be small enough so that these balls would not intersect $T_{\tilde{\epsilon}_{\mathtt{d}}}(\xi^\prime)$ for $\xi^{\prime}\not\in R$.\end{proof}
\begin{remark}\label{remark_about_eta}
Through this paper, we will fix $\eta\ll 1$ to be constant.
\end{remark}

We are headed towards the main lemma of this paper. First, we give some notations for the sets which the main lemma deals with, and prove a simple estimate.

For $0<\epsilon<\epsilon^\prime<\epsilon_{\mathtt{d}}$, $N\in\mathbb{N}$ and a function
\begin{equation*}
V:\{-N,\ldots,N\}\to\{0,1,\ldots,\mathtt{d}-1\}
\end{equation*}
define
\begin{align*}
Z(V,\epsilon,\epsilon^\prime)&=\Big\{x\in T_{a}^{N}(\Omega_{\mathrm{nc}}^{\epsilon^\prime})\cap T_{a}^{-N}(\Omega_{\mathrm{nc}}^{\epsilon^\prime}):\\
&\hspace{1cm}T_{a}^{m}x\in\Omega_{\mathrm{c},i}^{\epsilon}\iff V(m)=i,\:\forall m\in[-N,N]\cap\mathbb{Z},\:\forall i\in[1,\mathtt{d}-1]\cap\mathbb{Z}\Big\}.
\end{align*}
\begin{remark}
We denote $Z(V,\epsilon,\epsilon^\prime)=Z(V,\epsilon)$ if $\epsilon=\epsilon^\prime$.
\end{remark}

\begin{lemma}\label{number of Z}
For all $0<\epsilon<\epsilon^{\prime}$ small enough, there are at most $$|\log\epsilon|^{3}e^{\frac{3\log(|\log\epsilon|)}{|\log\epsilon|}N}$$
different functions $V$ for which $Z(V,\epsilon,\epsilon^{\prime})\not=\emptyset$.
\end{lemma}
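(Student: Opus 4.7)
The plan is to count admissible $V$'s via the combinatorial structure of cusp excursions.

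Fix $x\in Z(V,\epsilon,\epsilon^{\prime})$. Since the deep cusp neighbourhoods $\mathcal{F}T_{\epsilon}(\Gamma_{\xi})$ are pairwise disjoint as $\xi$ ranges over distinct $\Gamma$-orbits of bounded parabolic fixed points (Theorem~\ref{thin_part}), the set of times $m\in[-N,N]\cap\mathbb{Z}$ with $V(m)\neq 0$ decomposes into $k$ maximal \emph{excursion} intervals $[s_{j},e_{j}]$, each contained in a single cusp neighbourhood $\mathcal{F}T_{\epsilon}(\Gamma_{\xi_{j}})$ with $r_{j}=\rank(\xi_{j})\in\{1,\ldots,\mathtt{d}-1\}$. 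The function $V$ is then determined by the ordered data $(s_{j},e_{j},r_{j})_{j=1}^{k}$, so it suffices to bound the number of such data sets.

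To bound $k$, I combine items~(\ref{size_of_cusps: item 1}) and~(\ref{size_of_cusps: item 3}) of Lemma~\ref{size_of_cusps}. The first shows that every excursion at level $\epsilon$ is contained in a visit to the enlarged cusp $\mathcal{F}T_{\tilde{\epsilon}_{\mathtt{d}}}(\Gamma_{\xi_{j}})$ extending at least $\lceil|\log c_{1}\epsilon|\rceil$ time steps on each side of the deep excursion; the latter forces two consecutive excursions into the same cusp to be separated by at least $\lceil|\log c_{1}\epsilon|\rceil-t_{0}$. Since the enlarged cusp neighbourhoods for distinct $\xi$ are still pairwise disjoint (Theorem~\ref{thin_part}), the total time the orbit spends in enlarged cusps is at most $2N+1$, while every excursion contributes at least on the order of $|\log\epsilon|$ to this budget. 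This yields $k\ll N/|\log\epsilon|$, with an implicit constant depending only on $\Gamma$ and $\mathtt{d}$.

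The number of ordered $k$-tuples of disjoint integer subintervals of $\{-N,\ldots,N\}$ is at most $\binom{2N+1}{2k}$, and a rank assignment contributes at most $(\mathtt{d}-1)^{k}$ further choices, so
\begin{equation*}
\#\{V:Z(V,\epsilon,\epsilon^{\prime})\neq\emptyset\}\;\leq\;\sum_{k=0}^{\lfloor cN/|\log\epsilon|\rfloor}\binom{2N+1}{2k}(\mathtt{d}-1)^{k}
\end{equation*}
for a constant $c=c(\Gamma,\mathtt{d})$. Applying the elementary estimate $\binom{n}{m}\leq(en/m)^{m}$ with $n=2N+1$ and $m=2k$ and plugging in $k\leq cN/|\log\epsilon|$, the dominant term is of order $\exp\bigl(O(N\log|\log\epsilon|/|\log\epsilon|)\bigr)$; the $\ll N/|\log\epsilon|$ summands and the remaining constants depending on $\Gamma$ and $\mathtt{d}$ are absorbed into the $|\log\epsilon|^{3}$ prefactor once $\epsilon$ is taken sufficiently small (so that $|\log\epsilon|$ dominates everything in sight).

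The main technical subtlety is making the coefficient in front of $\tfrac{N\log|\log\epsilon|}{|\log\epsilon|}$ exactly $3$ as stated; this requires using both items~(\ref{size_of_cusps: item 1}) and~(\ref{size_of_cusps: item 3}) of Lemma~\ref{size_of_cusps} together, rather than a na\"\i ve per-cusp count, so that the bound on $k$ has an absolute coefficient not depending on the number of $\Gamma$-orbits of parabolic fixed points. Once $k$ is controlled this way, the rest is a routine application of the binomial estimate.
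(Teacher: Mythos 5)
Your proposal takes a genuinely different route to the same count. The paper partitions $[-N,N]$ into $\sim N/|\log\epsilon|$ blocks of size $\sim 2|\log\epsilon|$; Lemma~\ref{size_of_cusps} forces the positive part of $V$ restricted to any such block to be (essentially) a single sub-interval carrying a single rank, giving $\ll|\log\epsilon|^{2}$ possible restrictions per block, and the bound is the product over blocks, with the extra power of $|\log\epsilon|$ used to absorb the implied constants. You instead bound the number $k$ of excursion intervals globally by a budget argument and then count ordered tuples of intervals with a binomial coefficient. This is a legitimate alternative and arguably more transparent about where the $N/|\log\epsilon|$ comes from.

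However, there is a gap in the step $k\ll N/|\log\epsilon|$. You are counting \emph{strict} maximal intervals of $V^{-1}(\{1,\dots,\mathtt{d}-1\})$, but item~\ref{size_of_cusps: item 3}(b) only forbids re-entry to the same cusp for $t_{0}\leq|n|\leq\lceil|\log c_{1}\epsilon|\rceil$ after exiting; re-entry within the first $t_{0}$ steps is permitted. Consequently the strict intervals need not be $\gg|\log\epsilon|$ apart, and their $\lceil|\log c_{1}\epsilon|\rceil$-extensions need not be disjoint, so your budget argument does not directly bound the number of strict intervals but only the number of \emph{merged} intervals (in the sense of the construction preceding Lemma~\ref{main step}). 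The strict count could be larger by a factor of order $t_{0}$, which would feed a constant $c=O(t_{0})$ into $\binom{2N+1}{2k}$ and produce a coefficient $2c$ in front of $N\log|\log\epsilon|/|\log\epsilon|$ that can exceed $3$. The repair, which also makes the role of Lemma~\ref{size_of_cusps} explicit, is to parameterize $V$ by the $p$ merged intervals (for which $p\leq(1+o(1))N/|\log\epsilon|$ does hold by disjointness of the extended intervals) together with a per-interval fluctuation pattern near the endpoints; the latter contributes at most $2^{O(t_{0})}$ choices per interval, hence a factor $e^{O(N/|\log\epsilon|)}$ which is dominated by $e^{2N\log|\log\epsilon|/|\log\epsilon|}$ once $\epsilon$ is small. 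With this modification the coefficient becomes $2+o(1)<3$ and the polynomially-in-$N$ many summands in $k$ are absorbed for the same reason, so the stated bound follows.
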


\begin{proof} Let $V$ be a function with $Z(V,\epsilon,\epsilon^\prime)\not=\emptyset$, and define the interval $$J=\big[-\lceil|\log c_{1}\epsilon|\rceil,\lceil|\log c_{1}\epsilon|\rceil\big]\cap\mathbb{Z}.$$ Assume that the restriction\footnote{The main case of interest to us is $\lceil|\log c_{1}\epsilon|\rceil\leq N$. In the other case, this restriction may be thought of as the trivial restriction to $[-N,N]$. The following estimates might be much larger than required for the $\lceil|\log c_1\epsilon|\rceil>N$ case, yet it will not matter for the rest of the paper.} $V|_J$ is not identically zero.
Then, by Lemma \ref{size_of_cusps}, $V$ is positive and constant (up to $\ll 1$ ``fluctuations'' in value) on some sub-interval of $J$ in which the trajectory of any $x\in Z(v,\epsilon,\epsilon^\prime)$ visits $\Omega_{\mathrm{c}}^{\epsilon}$, and $V$ is zero outside of this sub-interval.
So the number of different restrictions $V|_J$ is bounded from above (up to a constant) by the number of different sub-intervals of $J$, i.e.\ by $c\lceil|\log c_1\epsilon|\rceil^2$.

By taking images and pre-images, there is the same number of possible
restrictions to any $I\subset[-N,N]$ of length $2\lceil|\log c_{1}\epsilon|\rceil$. As we can divide $[-N,N]$ into $\lceil\frac{2N+1}{2\lceil|\log c_{1}\epsilon|\rceil}\rceil$
sub-intervals of length up to $2\lceil|\log c_{1}\epsilon|\rceil$,
we obtain up to 
\begin{equation*}
(c\lceil|\log c_{1}\epsilon|\rceil^{2})^{\lceil\frac{2N+1}{2\lceil|\log c_{1}\epsilon|\rceil}\rceil}\leq|\log\epsilon|^{3(\frac{N}{|\log\epsilon|}+1)}=|\log\epsilon|^{3}\cdot e^{\frac{3\log(|\log\epsilon|)}{|\log\epsilon|}N}
\end{equation*}
different functions $V$, for small enough $\epsilon$.
\end{proof}

We define \textbf{Bowen balls} in the following manner. Let
\begin{equation*}
B_{N,\rho}=\underset{n=-N}{\overset{N}{\bigcap}}a^{-n}B_{\rho}^{G}(e)a^{n}.
\end{equation*}
Then a Bowen $(N,\rho)$-ball is a set of the form $xB_{N,\rho}$ for
some $x\in\Gamma\backslash G$.

Likewise, let
\begin{equation*}
    B_{N,\rho}^{+}=\underset{n=0}{\overset{N}{\bigcap}}a^{n}B_{\rho}^{G}(e)a^{-n},\:B_{N,\rho}^{-}=\underset{n=0}{\overset{N}{\bigcap}}a^{-n}B_{\rho}^{G}(e)a^{n}.
\end{equation*}
Then a forward Bowen $(N,\rho)$-ball is a set $xB_{N,\rho}^{+}$ for some $x\in\Gamma\backslash G$. A backward Bowen $(N,\rho)$-ball is a set $xB_{N,\rho}^{-}$.
\begin{remark}\label{bowen_ball_remark}
Bowen $(N,\rho)$-balls have width $\ll \rho e^{-N}$ in the $N^{+}$ and $N^{-}$ directions, and width $\ll \rho$ in the $MA$ directions. In particular, $xB_{N,\rho^{\prime}}$ may be covered by $\ll \lceil\frac{\rho^{\prime}}{\rho}\rceil^{\dim G}$ many Bowen $(N,\rho)$-balls, by considering small translations of $xB_{N,\rho}$ in all the directions of $G$. Note that $$xB_{N,\rho}\subset\Big\{y\in\Gamma\backslash G:\:d(T_{a}^n(x),T_{a}^{n}(y))<\rho,\:\forall n\in[-N,N]\cap\mathbb{Z}\Big\},$$
and equality holds if $\Gamma\backslash \mathbb{H}^{\mathtt{d}}$ is compact and $\rho$ is small enough.
\end{remark}

As will be shown in Lemma \ref{entropy_bound_by_Bowen_balls}, in order to
estimate the entropy of the frame flow, we should give an upper
bound to the number of Bowen $N$-balls needed to cover large subsets
of $\Gamma\backslash G$. The main step is to cover the set of all points in a given unit neighborhood in $\Gamma\backslash G$,
such that their trajectories enter another given unit neighborhood
after $N$ steps. As the map $T_{a}^{n}$ shrinks the $N^{-}$ part
of the unit ball in $G$ by $e^{n}$ in each direction, doesn't
change the size of the $MA$ part and enlarges the $N^{+}$ part by
$e^{n}$ in each direction, the trivial bound of the number of required forward Bowen $N$-balls in order to cover this set is $\ll e^{(\mathtt{d}-1)N}$, which would yield entropy $\mathtt{d}-1$.
As will be indicated in the proof of Lemma \ref{main step}, since
we will be restricting to the non-wandering set $\Omega_{\mathcal{F}}$, using the
geometry of $\Gamma$ we can show that the smaller amount $\ll e^{\delta(\Gamma)N}$
is a better bound. The key idea is that using the knowledge that a
trajectory enters some cusp, we can cut down the number of balls
even more, in a rate which correlates to the rank of the cusp. The smaller the rank, the more we can cut down the number of balls. This is the main lemma of this paper.\\

The main step of the main lemma is as follows.
For $\epsilon^\prime>0$, let $\{\Gamma O_i\}_{i=1}^{N_0(\epsilon^\prime)}$ be a covering of $\Omega_{\mathrm{nc}}^{\epsilon^\prime}$ as in Lemma \ref{number_of_balls}, where $\{\Gamma O_i\}_{i=1}^{N_1}$ cover $\Omega_{\mathrm{nc}}^{\epsilon_{\mathtt{d}}^{\prime}}$.
Note that $N_1$ is an absolute constant, independent of $\epsilon^{\prime}$.

For simplicity we shall initially consider the sets
\begin{align*}
Z_{+}(O_{i_{1}},O_{i_{2}},V,\epsilon)=\Big\{&x\in(\Gamma O_{i_{1}}\cap\Omega_{\mathcal{F}})\cap T_{a}^{-N}(\Gamma O_{i_{2}}\cap\Omega_{\mathcal{F}}):\:\\&\hspace{-5mm}T_{a}^{m}x\in \Omega_{\mathrm{c},i}^{\epsilon}\iff V(m)=i,\:\forall m\in[0,N]\cap\mathbb{Z},\:\forall i\in[1,\mathtt{d}-1]\cap\mathbb{Z}\Big\}
\end{align*}
for $1\leq i_{1},i_{2}\leq N_{0}(\epsilon^\prime)$, instead of $Z(V,\epsilon,\epsilon^{\prime})$.

Assume $Z_{+}(O_{i_{1}},O_{i_{2}},V,\epsilon)\not=\emptyset$
(otherwise the following lemma will be trivial). Consider the decomposition of $V^{-1}(\{1,\ldots,\mathtt{d}-1\})$
into a union of maximal mutually disjoint intervals $[i,j]\cap\mathbb{Z}$. If two such subsequent intervals $[i_1,j_1]\cap\mathbb{Z}$ and $[i_2,j_2]\cap\mathbb{Z}$ are less than $t_0$ units apart from each other (where $t_0$ is as in Lemma \ref{size_of_cusps}), we replace them with $[i_1,j_2]\cap\mathbb{Z}$, and continue to do so until all intervals are at least $t_0$ units apart from each other. Then, by Lemma \ref{size_of_cusps}, we emerge with intervals distanced at least
$2\lceil|\log c_{1}\epsilon|\rceil$ from each other. We may define
$I_{1},\ldots,I_{p}$ to be these intervals, extended by $\lceil|\log c_{1}\epsilon|\rceil-1$ 
in each direction, and then intersected with $\{0,\ldots,N\}$ (in
case we have exceeded this set, in either $I_{1}$ or $I_{p}$). These intervals $\{I_i\}_{i=1}^{p}$ are still mutually disjoint. In the times defined by some $I_{i}$,
the trajectory of any $x\in Z_{+}(O_{i_{1}},O_{i_{2}},V,\epsilon)$ 
meets only one cusp, whose rank is denoted by $r_{i}$. Let us divide $\{0,\ldots,N\}\smallsetminus\underset{i=1}{\overset{p}{\bigcup}}I_{i}$
into maximal mutually disjoint intervals $J_{1},\ldots,J_{l}$. 

We note that $\Gamma ga^{n}\in\Omega_{\mathrm{nc}}^{\epsilon_{\mathtt{d}}^{\prime}}$
for all $n$ which are endpoints of any of the intervals, with the
possible exception of $n=0$ or $n=N$, due to Lemma \ref{size_of_cusps}.
\begin{lemma}[The main step]\label{main step}
Assume $i_1,i_2\leq N_1$.
There is a constant $C$, depending only on $\Gamma$, such that the
following holds: For every $0\leq K\leq N$ such that $[0,K]\cap\mathbb{Z}=\underset{i=1}{\overset{s}{\bigcup}}I_{i}\cup\underset{j=1}{\overset{t}{\bigcup}}J_{j}$
for some $s,t$, the set $Z_{+}(O_{i_{1}},O_{i_{2}},V,\epsilon)$ can be covered by
\begin{equation*}
\leq C^{s+t}\cdot e^{\delta\underset{j=1}{\overset{t}{\sum}}|J_{j}|+\underset{i=1}{\overset{s}{\sum}}\frac{r_{i}}{2}|I_{i}|}
\end{equation*}
many sets of the form
\begin{equation*}
\Gamma(\gamma_1 O_l a^{-\min\{K+1,N\}}\cap O_{i_{1}})
\end{equation*}
for some $\gamma_1\in \Gamma$ and $1\leq l\leq N_1$.
\end{lemma}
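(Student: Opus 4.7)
The plan is to prove the bound by induction on $s+t$, the number of intervals that have been processed. At each step I claim we absorb a multiplicative factor of $C\cdot e^{\delta|J_{t+1}|}$ when the next interval added is a non-cusp interval, and a factor of $C\cdot e^{(r_{s+1}/2)|I_{s+1}|}$ when it is a cusp interval of rank $r_{s+1}$; multiplying these bounds telescopes to the stated estimate, with the $C^{s+t}$ factor collecting the $O(1)$ slack from each step. The base case $s+t=0$ is trivial: the single ``pair'' $(\gamma_1,l)=(e,i_1)$ gives the cover $\Gamma(O_{i_1}\cap O_{i_1})=\Gamma O_{i_1}$, valid since $i_1\leq N_1$.

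For the inductive step with a non-cusp interval $J_{t+1}$ of length $|J|$, fix one of the current pairs $(\gamma_1,l)$ and consider an $x\in Z_+(O_{i_1},O_{i_2},V,\epsilon)$ with $xa^{K+1}\in\Gamma(\gamma_1 O_l)$. By the design of the extended intervals, $K+|J|+1$ lies at the endpoint of the next interval, so by Lemma~\ref{size_of_cusps} we have $xa^{K+|J|+1}\in\Omega_{\mathrm{nc}}^{\epsilon_{\mathtt{d}}^{\prime}}$ and hence lies in some $\Gamma O_{l'}$ with $l'\leq N_1$: only $N_1=O(1)$ choices. For the new $\Gamma$-element $\gamma_1'$, Lemma~\ref{number_of_balls} places both the lifted positions at times $K+1$ and $K+|J|+1$ within hyperbolic distance $r_0$ of reference basepoints, so the triangle inequality forces $\gamma_1'^{-1}\gamma_1$ to lie in $N(|J|+2r_0+O(1),o)$. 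Proposition~\ref{number_of_elements_which_translate} then gives at most $Be^{\delta(|J|+2r_0+O(1))}\ll e^{\delta|J|}$ valid choices of $\gamma_1'$, with the constants absorbed into $C$.

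For the inductive step with a cusp interval $I_{s+1}$ of length $|I|$ and rank $r$, normalize so that the relevant cusp is $\xi=\infty$ and lift the portion of the trajectory during $I_{s+1}$ into the precisely $\Gamma_\infty$-invariant region $T_\epsilon(\Gamma_\infty)$ (cf.\ p.~\pageref{page of precisely invariant}). By the semicircular geometry of hyperbolic geodesics and equation~\eqref{how_cusps_look_eq}, a geodesic segment of length $|I|$ inside this cusp reaches maximum height $\sim e^{|I|/2}$, and its entry and exit basepoints on the cusp boundary are separated in Euclidean horizontal distance by $\sim e^{|I|/2}$. Since the endpoints of $I_{s+1}$ lie in $\Omega_{\mathrm{nc}}^{\epsilon_{\mathtt{d}}^\prime}$, two pairs $(\gamma_1,l)$ and $(\gamma_1',l')$ describing the same cusp excursion must satisfy $\gamma_1'^{-1}\gamma_1\in\Gamma_\infty$ (up to a bounded error absorbed into $C$), because Theorem~\ref{thin_part} guarantees that a single cusp neighborhood is only stabilized by $\Gamma_\infty$. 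Since $\Gamma_\infty$ acts cocompactly by essentially Euclidean isometries on the rank-$r$ affine subspace $L$, the number of elements producing horizontal displacement up to $\sim e^{|I|/2}$ is $\ll (e^{|I|/2})^r=e^{(r/2)|I|}$; multiplying by the $\leq N_1$ choices for $l'$ yields the claimed factor.

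The main obstacle I expect is the cusp step: one must rigorously identify the relevant $\Gamma$-elements connecting before/after the excursion with elements of $\Gamma_\xi$, controlling the ``bounded error'' from the fact that the endpoints of $I_{s+1}$ lie a bounded distance inside $\Omega_{\mathrm{nc}}^{\epsilon_{\mathtt{d}}^\prime}$ rather than exactly on a canonical cusp boundary, and carefully translating between the semicircular geometry of cusp excursions (giving the Euclidean radius $e^{|I|/2}$) and the Euclidean lattice-point count in $L$. Using Lemma~\ref{number_of_balls}(1), which ensures that cusp-adjacent balls in the cover only see one cusp class at a time, avoids a combinatorial blowup, while the extension of intervals by $\lceil|\log c_1\epsilon|\rceil-1$ guarantees that at the endpoints of every $I_i$ and $J_j$ the trajectory is well into $\Omega_{\mathrm{nc}}^{\epsilon_{\mathtt{d}}^\prime}$, allowing the use of Proposition~\ref{number_of_elements_which_translate} uniformly throughout the induction.
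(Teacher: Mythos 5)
Your proposal follows essentially the same inductive scheme as the paper: base case with a single ball, then peel off one interval at a time, counting the number of new $(\gamma,O_l)$ pairs via Proposition~\ref{number_of_elements_which_translate} for $J$-intervals and via precise invariance of $T_{\tilde\epsilon_{\mathtt d}}(\Gamma_\xi)$ plus the hyperbolic-to-Euclidean displacement bound $\|\gamma z_0 - z_0\| \ll e^{\frac{1}{2}d(\gamma\pi_K(o),\pi_K(o))}$ and lattice-point counting for $I$-intervals. One small clarification: the membership $\gamma_1^{-1}\gamma_2\in\Gamma_\xi$ from precise invariance is exact, not merely ``up to a bounded error''; the bounded-error slack that must be absorbed into $C$ comes instead from the finite-index passage $H\triangleleft\Gamma_\xi$ and the additive constants in the distance estimate, which you correctly flag as the point needing care.
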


\begin{proof}
First note that the ball $\{\Gamma O_{i_{1}}\}$ is clearly a covering
of $Z_{+}(O_{i_{1}},O_{i_{2}},V,\epsilon)$ of the correct form $\Gamma(\gamma_1 O_{l}a^{-0}\cap O_{i{}_{1}})$.
The size of this covering, $1$, is of course bounded from above by
\begin{equation*}
C^{s+t}\cdot e^{\delta\underset{j=1}{\overset{t}{\sum}}|J_{j}|+\underset{i=1}{\overset{s}{\sum}}\frac{r_{i}}{2}|I_{i}|}
\end{equation*}
for $t=s=0$. For reasons of readability, the constant $C>0$ will be defined only later in the proof, independently
of the following construction. It is important to note that this is
consistent and does not raise any circular argument.

We prove the lemma by going through the time intervals by their order, assuming by induction that the lemma is true for all previous intervals. For both the first interval (base case) and the following intervals (induction step), we start
with a covering of $Z_{+}(O_{i_{1}},O_{i_{2}},V,\epsilon)$ by up to
\begin{equation*}
C^{s+t}\cdot e^{\delta\underset{j=1}{\overset{t}{\sum}}|J_{j}|+\underset{i=1}{\overset{s}{\sum}}\frac{r_{i}}{2}|I_{i}|}
\end{equation*}
sets of the form $\Gamma(\gamma_1 O_l a^{-K}\cap O_{i_{1}})$,
where the interval we consider is $[K,K^\prime]\cap\mathbb{Z}$, either of type
$I_{s+1}$ or type $J_{t+1}$. We write $$K^{\prime}=\begin{cases}K+L & K^{\prime}<N\\ K+L+1 & K^{\prime}=N\end{cases}$$ in order to simplify notations in the proof.

The main idea is the following.
Let $x\in Z_{+}(O_{i_1},O_{i_2},V,\epsilon)$. Then $x=\Gamma g$ for some $g\in \gamma_1 O_l a^{-K}\cap O_{i_{1}}$. By the construction of the intervals, $\Gamma ga^{K+L+1}\in\Omega_{\mathrm{nc}}^{\epsilon_{\mathtt{d}}^{\prime}}$ (even in the $K+L+1=N$ case, i.e.\ the last interval, by the assumption $i_2\leq N_1$). Therefore, $ga^{K+L+1}=\gamma_{2}\tilde{g}_{2}$
for some $\gamma_{2}\in\Gamma$ and $\tilde{g}_{2}\in O_{j}$, for $1\leq j\leq N_1$.
Then
\begin{equation*}
g\in\gamma_{2}O_{j}a^{-(K+L+1)}\cap O_{i_{1}},
\end{equation*}
which under the quotient by $\Gamma$, is indeed of
the desired form. We need to bound the number of possible sets of this form.
This can be done by counting the number of possible
2-tuples $(O_{j},\gamma_{2})$, or alternatively the number of 3-tuples $(O_{j},\gamma_{1},\gamma_{1}^{-1}\gamma_{2})$.

The number of $O_j$'s is at most $N_1$. The number of $\gamma_1$'s is at most the number of sets given in the previous step, which is bounded by
\begin{equation*}
C^{s+t}\cdot e^{\delta\cdot\underset{j=1}{\overset{t}{\sum}}|J_{j}|+\underset{i=1}{\overset{s}{\sum}}\frac{r_{i}}{2}|I_{i}|}.
\end{equation*}

It only remains to count the elements of type $\gamma_{1}^{-1}\gamma_{2}$. 
Let $\tilde{g}_1=\gamma_1^{-1}ga^{K}\in O_l$.
By construction, $\Gamma \tilde{g}_{1},\Gamma \tilde{g}_{2}\in\Omega_{\mathrm{nc}}^{\epsilon_{\mathtt{d}}^{\prime}}$, and so by Lemma \ref{number_of_balls} we get $d(o,\tilde{g}_{1}),d(o,\tilde{g}_{2})<r_0$. Moreover, since $$\gamma_{1}^{-1}\gamma_{2}\tilde{g}_{2}=\tilde{g}_{1}a^{L+1}$$
we have
\[d(\gamma_{1}^{-1}\gamma_{2}\pi_{K}(o),\pi_{K}(o))\leq2r_0+L+1.\]
Therefore, these maps $\gamma_{1}^{-1}\gamma_{2}$ are contained in
\begin{equation*}
\Gamma_{J}=\{\gamma\in\Gamma:\:d(\gamma\pi_{K}(o),\pi_{K}(o))\leq2r_0+L+1\}.
\end{equation*}

In case the next interval is of type $J_{t+1}$ we cannot give a better
restriction of what maps of $\Gamma$ can be of the former type, and
we will have to bound $\Gamma_{J}$ itself using Proposition \ref{number_of_elements_which_translate},
obtaining
\begin{equation*}
|\Gamma_J|\leq Be^{(2r_0+L+1)\delta}\leq\tilde{B}e^{\delta|J_{t+1}|},
\end{equation*}
where $\tilde{B}=Be^{2r_0\delta}$ depends only on $\Gamma$ and $o$.\\

In case the next interval is of type $I_{t+1}$, we shall be
using the information about the trajectory spending time in the cusp
in order to show that all the maps of the type $\gamma_{1}^{-1}\gamma_{2}$
are in fact contained in a (proper) subset of $\Gamma_{J}$. This
will allow us to cut down on the number of sets. Let $R$ be a set of representatives for the $\Gamma$-orbits of bounded parabolic fixed points, as in Lemma \ref{number_of_balls}.

Indeed, in the $I_{t+1}$ case, $\Gamma\tilde{g}_{1},\Gamma\tilde{g}_{2}\in\Omega_{\mathrm{c}}^{\tilde{\epsilon}_{\mathtt{d}}}$ due to Lemma \ref{size_of_cusps}.
By the choice of the cover as in Lemma \ref{number_of_balls}, there is a parabolic fixed point $\xi\in R$ such that $\tilde{g}_{1},\tilde{g_2}\in \mathcal{F}T_{\tilde{\epsilon}_{\mathtt{d}}}(\Gamma_{\xi})$.
Since $\{\Gamma\tilde{g}_1a^{m}\}_{m=0}^{L+1}\subset\mathcal{F}\cusp_{\tilde{\epsilon}_{\mathtt{d}}}(X)$, i.e.\ the trajectory stayed in the cusp during the whole interval,
we obtain from Lemma \ref{size_of_cusps} that $\tilde{g}_{1}a^{L+1}\in \mathcal{F}T_{\tilde{\epsilon}_{\mathtt{d}}}(\Gamma_{\xi})$
as well.
So we have obtained $\tilde{g}_{1}a^{L+1},\tilde{g}_{2}\in \mathcal{F}T_{\tilde{\epsilon}_{\mathtt{d}}}(\Gamma_{\xi})$.
Since $\gamma_{1}^{-1}\gamma_{2}\tilde{g}_{2}=\tilde{g}_{1}a^{L+1}$, we
get from the precise invariance of $T_{\tilde{\epsilon}_{\mathtt{d}}}(\Gamma_{\xi})$
that $\gamma_{1}^{-1}\gamma_{2}\in\Gamma_{\xi}$ (see p.~\pageref{page of precisely invariant}).

Therefore, we have shown that in the $I_{t+1}$ case it suffices
to bound
\begin{equation*}
\Gamma_{I}=\{\nu\in\Gamma_{\xi}:\:d(\gamma\pi_{K}(o),\pi_{K}(o))\leq2r_0+L+1\}\subset\Gamma_{J}.
\end{equation*}
Let us do so. For simplicity, we may assume $\xi=\infty$. Recall that there is a subgroup $H\triangleleft\Gamma_{\xi}$ of finite index
$n(\Gamma)$, isomorphic to $\mathbb{Z}^{r_{\xi}}$, such that $H$
acts co-compactly on $\mathbb{R}^{r_{\xi_{j}}}\subset\mathbb{R}^{\mathtt{d}-1}=\partial\mathbb{H}^{\mathtt{d}}\backslash\{\xi\}$
by translations. Say $\{\nu_{1},\ldots\nu_{n(\Gamma)}\}$ are representatives
for $\Gamma_{\xi}/H$, then every $\nu\in\Gamma_{I}$ is of the form
$\nu=\gamma\nu_{k}$ for $\gamma\in H,\:1\leq k\leq n(\Gamma)$. Therefore
\begin{align*}
d(\gamma \pi_{K}(o),\pi_{K}(o))&\leq d(\nu\pi_{K}(o),\pi_{K}(o))+d(\nu_{k}\pi_{K}(o),\pi_{K}(o))\\&
\leq2r_0+L+1+\underset{1\leq k\leq n(\Gamma)}{\max}d(\nu_{k}\pi_{K}(o),\pi_{K}(o)).
\end{align*}
We have obtained $|\Gamma_{I}|\leq n(\Gamma)|\Gamma_{I}^{\prime}|$
for
\begin{equation*}
\Gamma_{I}^{\prime}=\{\gamma\in H:\:d(\gamma\pi_{K}(o),\pi_{K}(o))\leq2r_0+\underset{1\leq k\leq n(\Gamma)}{\max}d(\upsilon_{k},o)+L+1\}.
\end{equation*}
A simple hyperbolic geometry calculation shows that if $\gamma\in H$
then
\begin{equation*}
\|\gamma z_{0}-z_{0}\|_{\mathbb{R}^{\mathtt{d}-1}}\ll e^{\frac{1}{2}d(\gamma\pi_{K}(o),\pi_{K}(o))}
\end{equation*}
for $z_{0}$ the point on $\partial\mathbb{H}^{\mathtt{d}}$ right
below $\pi_{K}(o)$, i.e.\ with $\mathtt{d}$'th coordinate equal to $0$. Therefore, $|\Gamma_{I}^{\prime}|\leq|\Gamma_{\partial\mathbb{H}^{\mathtt{d}}}|$
for 
\begin{equation*}
\Gamma_{\partial\mathbb{H}^{\mathtt{d}}}=\{\gamma\in H:\:\|\gamma z_{0}-z_{0}\|_{\mathbb{R}^{\mathtt{d}-1}}\leq\tilde{F}e^{\frac{1}{2}L}\}
\end{equation*}
for some constant $\tilde{F}>0$.

In order to bound the size of this set, note that $Hz_0-z_0$ is a lattice in $\mathbb{R}^{r_{\xi}}$,
and so $|\Gamma_{\partial\mathbb{H}^{\mathtt{d}}}|$ is roughly equal
to the volume of a $r_{\xi}$-dimensional ball of radius $\tilde{F}e^{\frac{1}{2}L}$,
divided by $d(H)$, the volume of a fundamental polyhedron. That is,
$|\Gamma_{\partial\mathbb{H}^{\mathtt{d}}}|\ll e^{\frac{r_{\xi}}{2}L}$. 
Therefore, we obtain $|\Gamma_{I}|\leq\tilde{E} e^{\frac{r_{s+1}}{2}|I_{s+1}|}$,
where $\tilde{E}$ depends only on $\Gamma$.\\

So we get that $Z_{+}(O_{i_{1}},O_{i_{2}},V,\epsilon)$ can be covered by
\begin{equation*}
C^{s+t}\cdot e^{\delta\cdot\underset{j=1}{\overset{t}{\sum}}|J_{j}|+\underset{i=1}{\overset{s}{\sum}}\frac{r_{i}}{2}|I_{i}|}\cdot N_1\cdot\begin{cases}\tilde{E} e^{\frac{r_{s+1}}{2}|I_{s+1}|} & \mbox{the next interval is }I_{s+1}\\\tilde{B}e^{\delta|J_{t+1}|} & \mbox{the next interval is }J_{t+1}\end{cases}
\end{equation*}
sets of the desired form, which proves the claim for $C=N_1\cdot\max\{\tilde{E},\tilde{B}\}$.
\end{proof}

We can now prove the main lemma, which is a mild generalization of Lemma~\ref{main step}.
\begin{lemma}[Main Lemma]\label{main_lemma}
There is a constant $C>0$, depending only on
$\Gamma$, such that for all $0<\epsilon<\epsilon^{\prime}<\epsilon_{\mathtt{d}}$ small enough, for all $N\in\mathbb{N}$ and for all $$V:\{-N,\ldots,N\}\to\{0,1,\ldots,\mathtt{d}-1\},$$ the set
$Z(V,\epsilon,\epsilon^{\prime})$ can be covered by
\begin{equation*}
    \ll_{\epsilon^\prime} C^{\frac{4N}{|\log\epsilon|}}\cdot e^{(2N+1)\delta-\underset{i=1}{\overset{\mathtt{d}-1}{\sum}}\frac{2\delta-i}{2}\cdot|V^{-1}(i)|}
\end{equation*}
Bowen $(N,\eta)$-balls.
\end{lemma}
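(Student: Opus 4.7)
The plan is to reduce the Main Lemma to Lemma~\ref{main step} via a time shift and then reinterpret the resulting covering sets as Bowen $(N,\eta)$-balls. First I would perform the bijection $y=T_a^{-N}x$, which identifies $Z(V,\epsilon,\epsilon^\prime)$ with the analogous set on the interval $[0,2N]$ whose cusp-visiting pattern is the shifted function $\tilde V(n)=V(n-N)$ and whose endpoints $y,T_a^{2N}y$ lie in $\Omega_{\mathrm{nc}}^{\epsilon^\prime}$. Covering $\Omega_{\mathrm{nc}}^{\epsilon^\prime}$ by $N_0(\epsilon^\prime)\ll_{\epsilon^\prime}1$ balls $\{\Gamma O_i\}$ as in Lemma~\ref{number_of_balls}, and decomposing according to which pair of balls contains the two endpoints, splits the problem into $\ll_{\epsilon^\prime}1$ subsets of the form $Z_+(O_{i_1},O_{i_2},\tilde V,\epsilon)$.

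I would then apply Lemma~\ref{main step} to each such piece. That lemma was stated for endpoints in $\{O_i\}_{i\leq N_1}$, covering $\Omega_{\mathrm{nc}}^{\epsilon_{\mathtt{d}}^\prime}$, but inspecting its proof one sees that the only change needed is to enlarge the constant $r_0$ in the final $|\Gamma_J|$ bound to an $\epsilon^\prime$-dependent value, which is absorbed into the $\ll_{\epsilon^\prime}$ prefactor. The output is a cover of $Z_+(O_{i_1},O_{i_2},\tilde V,\epsilon)$ by at most $C^{s+t}\cdot e^{\delta\sum_j|J_j|+\sum_i\frac{r_i}{2}|I_i|}$ sets of the form $\Gamma(\gamma_1 O_l a^{-2N}\cap O_{i_1})$.

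The geometric heart of the argument is that, undoing the time shift, each such set pulls back to $\{x\in\Gamma\backslash G:T_a^{-N}x\in\Gamma O_{i_1},\ T_a^Nx\in\Gamma\gamma_1 O_l\}$. Picking any centre $x_0$ in this set and writing $x=x_0\exp(u)$, the two endpoint constraints at times $-N$ and $+N$ read $\|\mathrm{Ad}(a^{\mp N})u\|\ll\eta$; since $\mathrm{Ad}(a_t)$ acts by $e^{\pm t}$ on $\mathfrak{n}^{\pm}$ and trivially on $\mathfrak{m}\oplus\mathfrak{a}$, this forces $\|u_{\mathfrak{n}^+}\|,\|u_{\mathfrak{n}^-}\|\ll\eta e^{-N}$ and $\|u_{\mathfrak{m}\oplus\mathfrak{a}}\|\ll\eta$, exactly the shape of a Bowen $(N,\eta)$-ball. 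Hence each covering set is contained in $O(1)$ Bowen $(N,\eta)$-balls.

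Finally, since each extended interval $I_i$ has length at least $2\lceil|\log c_1\epsilon|\rceil$, the total number of intervals in $[0,2N]$ satisfies $s+t\leq\frac{4N}{|\log\epsilon|}+O(1)$, whence $C^{s+t}\ll C^{4N/|\log\epsilon|}$. For the exponent, rewriting $\sum_j|J_j|=2N+1-\sum_i|I_i|$ gives $(2N+1)\delta-\sum_i(\delta-r_i/2)|I_i|$; splitting $|I_i|=|V^{-1}(r_i)\cap I_i|+|V^{-1}(0)\cap I_i|$ (using that a single cusp of rank $r_i$ is visited within each $I_i$) and invoking Beardon's inequality $\delta>r_i/2$ show that the ``extension'' portions $|V^{-1}(0)\cap I_i|$ contribute non-positively, yielding the desired bound $(2N+1)\delta-\sum_{k=1}^{\mathtt{d}-1}\frac{2\delta-k}{2}|V^{-1}(k)|$. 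The main conceptual obstacle is the geometric identification of the covering sets with Bowen $(N,\eta)$-balls via the time shift, since a priori one only has two single-sided constraints at the endpoints; once this symmetry is exploited, the combinatorial bookkeeping is essentially automatic.
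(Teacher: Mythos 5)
Your proposal is correct and follows essentially the same route as the paper: shift by $T_a^{-N}$ to reduce to a one-sided covering problem on $[0,2N]$, decompose over pairs of endpoint balls $(O_{i_1},O_{i_2})$, apply Lemma~\ref{main step} iteratively interval by interval, then shift back by $T_a^{N}$ to produce two-sided Bowen $(N,\eta)$-balls. The only cosmetic difference is the final conversion step: you identify the shape of the shifted covering sets directly via the $\mathrm{Ad}(a^{\pm N})$ action on $\mathfrak{n}^\pm\oplus\mathfrak{m}\oplus\mathfrak{a}$, whereas the paper first notes that each covering set lies in $\ll 1$ forward Bowen $(2N,\eta)$-balls and then uses $T_a^N(yB_{2N,\eta}^{+})\subset(T_a^N y)B_{N,\eta}$ — these are the same observation unpacked at different levels. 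One small point of care in your interval count: bounding $s+t$ uses not only that each $I_i$ has length $\gg|\log\epsilon|$ but also that the $I$'s and $J$'s alternate, so $t\leq s+1$; this is implicit in what you wrote but worth making explicit.
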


\begin{proof}
Let $\epsilon^{\prime}<\epsilon_{\mathtt{d}}^{\prime}$ be small enough with respect to all previous lemmas.

Recall that in Lemma \ref{main step} we assumed that $i_1,i_2\leq N_1$ i.e.\ $\Gamma O_{i_{1}},\Gamma O_{i_{2}}\subset\Omega_{\mathrm{nc}}^{\epsilon_{\mathtt{d}}^{\prime}}$.
Even if this is not the case, the main idea of the proof remains intact, and only minor adjustments are needed in the first and last intervals, as follows.
Now, it is possible that in the first interval $\tilde{g}_1$ is in $\Omega_{\mathrm{nc}}^{\epsilon^{\prime}}$ rather than in $\Omega_{\mathrm{nc}}^{{\epsilon_{\mathtt{d}}^{\prime}}}$, and similarly for $\tilde{g}_2$ in the last interval, and so
\begin{equation*}
d(\tilde{g}_{i},o)<\tilde{r}_0+\lceil|\log c_{2}\epsilon^{\prime}|\rceil
\end{equation*}
rather than $<r_0$. Therefore, $d(\gamma_1^{-1}\gamma_2\pi_{K}(o),\pi_{K}(o))$ would increase accordingly.
This would yield an increase of $\ll e^{2|\log\epsilon^{\prime}|\delta}$ in the number of element $\gamma_1^{-1}\gamma_2$ and so in the number of sets in the cover.

In any case, at the end of the $p+l$ iterations of Lemma \ref{main step}, we emerge with a covering of $Z_{+}(O_{i_{1}},O_{i_{2}},V,\epsilon)$ by
\begin{align*}
f&\ll e^{2|\log\epsilon^{\prime}|\delta}\cdot C^{p+l}\cdot e^{\delta\cdot\underset{j=1}{\overset{l}{\sum}}|J_{j}|+\underset{i=1}{\overset{p}{\sum}}\frac{r_{i}}{2}|I_{i}|}
\ll_{\epsilon^\prime} C^{p+l}\cdot e^{\delta N-\underset{i=1}{\overset{\mathtt{d}-1}{\sum}}\frac{2\delta-i}{2}\cdot|V^{-1}(i)|}
\end{align*}
sets, where we used the fact mentioned in \S\ref{PS BM Section} that $\delta>\frac{r_{i}}{2}$ for all cusps. Each of the sets is contained in the union of $\ll 1$ forward Bowen $(N,\eta$)-balls.
Note that
\begin{equation*}
p+l\leq\Big\lceil\frac{N}{\frac{2\lceil|\log c_{1}\epsilon|\rceil+1}{3}}\Big\rceil\leq\Big\lceil\frac{2N}{|\log\epsilon|}\Big\rceil,
\end{equation*}
because each interval of type $I_{t}$ is of length at least $2\lceil|\log c_{1}\epsilon|\rceil-1$. So we got
\begin{equation*}
\ll_{\epsilon^{\prime}}C^{\frac{2N}{|\log\epsilon|}}\cdot e^{\delta N-\underset{i=1}{\overset{\mathtt{d}-1}{\sum}}\frac{2\delta-i}{2}\cdot|V^{-1}(i)|}
\end{equation*}
forward Bowen $(N,\eta$)-balls.\\

Let us now cover $Z(V,\epsilon,\epsilon^{\prime})$ by (non-forward) Bowen $(N,\eta$)-balls. It is simply done by evoking Lemma \ref{main step} $N_0(\epsilon^\prime)^2$ times (for all $i_1,i_2$) to obtain a covering of  
\begin{align*}
T_{a}^{-N}Z(V,\epsilon,\epsilon^{\prime})&=\Big\{x\in \Omega_{\mathrm{nc}}^{\epsilon^{\prime}}\cap T_{a}^{-2N}(\Omega_{\mathrm{nc}}^{\epsilon^{\prime}}):\\&
\hspace{-2mm}T_{a}^{m}x\in \Omega_{\mathrm{c},i}^{\epsilon}\iff V(m-N)=i,\:\forall m\in[0,2N]\cap\mathbb{Z},\:\forall i\in[1,\mathtt{d}-1]\cap\mathbb{Z}\Big\}
\end{align*}
by
\begin{equation*}
\ll_{\epsilon^\prime} N_0(\epsilon^\prime)^{2}\cdot C^{\frac{4N}{|\log\epsilon|}}\cdot e^{(2N+1)\delta-\underset{i=1}{\overset{\mathtt{d}-1}{\sum}}\frac{2\delta-i}{2}\cdot|V^{-1}(i)|}
\end{equation*}
forward Bowen $(2N,\eta)$-balls. Clearly $T_{a}^{N}(yB_{2N,\eta}^{+})\subset(T_{a}^{N}y)B_{N,\eta}$ and so this yields a covering of $Z(V,\epsilon,\epsilon^{\prime})$ by Bowen $(N,\eta)$-balls.

To finish the proof, recall that $N_{0}(\epsilon^{\prime})\ll_{\epsilon^\prime} 1$ due to Lemma~\ref{number_of_balls} and Remark~\ref{remark_about_eta}.\end{proof}

\subsection{Proof of Theorem \ref{theorem1}}
Lemma \ref{main_lemma} is the key tool for estimating the entropy of the frame flow. Using this lemma we can proceed in proving the desired results. The rest of the proofs of Theorems \ref{theorem1}-\ref{theorem2} follow the scheme provided in \cite{einsiedler2011distribution}.

The following lemma concludes item \ref{thm1 item1} of Theorem \ref{theorem1}. Here $r_{\max}$ stands for the maximal rank of a cusp of $\Gamma\backslash\mathbb{H}^{\mathtt{d}}$.
\begin{lemma}\label{lemma_1}Let $(\mu_i)_{i\in\mathbb{N}}$ be as in the statement of Theorem \ref{theorem1}. Let $\mu$ be a weak-$\star$ limit
of a subsequence of $(\mu_{i})_{i\in\mathbb{N}}$. Then
\begin{equation*}
\mu(\Omega_{\mathrm{nc}}^{\epsilon})\geq1-\frac{1}{2\delta-r_{\max}}\cdot\frac{4\log(|\log\epsilon|)}{|\log\epsilon|}
\end{equation*}
for all $0<\epsilon<\epsilon_{\mathtt{d}}$ small enough.
In particular, $\mu$ is a probability measure. \end{lemma}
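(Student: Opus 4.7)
The plan is to sandwich the Shannon entropy $H_{\mu_i}(\xi^{N_i})$ of the partition $\xi^{N_i}$ of $\Gamma\backslash G$ into Bowen $(N_i,\eta)$-balls between two bounds, where $N_i=\log(1/\lambda_i)$ and $\eta$ is a small fixed constant. Transferring the resulting information about $\mu_i$ to the weak-$\star$ limit will then yield both the stated inequality for $\mu(\Omega_{\mathrm{nc}}^\epsilon)$ and the fact that $\mu$ is a probability measure.

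For the lower bound, observe that by Remark~\ref{bowen_ball_remark}, each atom $B$ of $\xi^{N_i}$ satisfies $B\subset yB_1^{N^+}B_1^{MA}B_{\lambda_i}^{N^-}$ for any $y\in B$, provided $\eta$ is small enough (so that $\eta e^{-N_i}\leq\lambda_i$). Hypothesis~(2), combined with the observation that $\mu_i(\mathcal{F}X\setminus\mathcal{F}\core_{H_i})\to 0$ by hypothesis~(1), therefore yields
\[
\sum_B \mu_i(B)^2 \;\leq\; \mu_i\times\mu_i\bigl\{(x,y)\colon x\in yB_1^{N^+}B_1^{MA}B_{\lambda_i}^{N^-}\bigr\}+o(1) \;\leq\; C_{\epsilon_0}\,\lambda_i^{\delta-\alpha\epsilon_0}+o(1).
\]
The standard inequality between Shannon and R\'enyi--$2$ entropies, $H(\xi)\geq -\log\sum_B\mu_i(B)^2$, then gives
\[
H_{\mu_i}(\xi^{N_i})\;\geq\;N_i(\delta-\alpha\epsilon_0)-O_{\epsilon_0}(1).
\]

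For the upper bound I extend the cusp-visit pattern $V\colon\{0,\ldots,N_i\}\to\{0,\ldots,\mathtt{d}-1\}$ to be defined for \emph{every} $x\in\Gamma\backslash G$ by simply reading off the cusp ranks from the trajectory, so that the $Z(V)$'s partition $\Gamma\backslash G$. The generalization of the Main Lemma described in the second paragraph of the proof of Lemma~\ref{main_lemma}, with $\epsilon'$ taken to be a fixed constant, covers each $Z(V)$ by at most $K(V)\ll e^{N_i\delta-\sum_j\frac{2\delta-j}{2}|V^{-1}(j)|}$ atoms of $\xi^{N_i}$ up to subexponential factors. Using $H_{\mu_i}(\xi^{N_i})\leq H_{\mu_i}(V)+H_{\mu_i}(\xi^{N_i}\mid V)$, bounding $H_{\mu_i}(V)$ via Lemma~\ref{number of Z}, and using the $T_a$-invariance identity
\[
\sum_V \mu_i(Z(V))\,|V^{-1}(j)|\;=\;(N_i+1)\,\mu_i(\Omega_{\mathrm{c},j}^\epsilon),
\]
which is an exact equality precisely because the extended $Z(V)$'s tile $\Gamma\backslash G$, I obtain
\[
\frac{1}{N_i}H_{\mu_i}(\xi^{N_i})\;\leq\;\delta-\sum_{j=1}^{\mathtt{d}-1}\frac{2\delta-j}{2}\mu_i(\Omega_{\mathrm{c},j}^\epsilon)+O\!\left(\frac{\log|\log\epsilon|}{|\log\epsilon|}\right).
\]

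Comparing the two bounds and dividing by $N_i$ yields $\sum_j\frac{2\delta-j}{2}\mu_i(\Omega_{\mathrm{c},j}^\epsilon)\leq\alpha\epsilon_0+O(\log|\log\epsilon|/|\log\epsilon|)+o(1)$ as $i\to\infty$. Passing to the weak-$\star$ limit $\mu$ along the convergent subsequence (using Portmanteau: $\mathcal{F}\cusp_\epsilon(X)$ is open and each $\mu_i$ is supported on $\Omega_{\mathcal{F}}$ by Poincar\'e recurrence, so $\mu(\Omega_{\mathrm{c},j}^\epsilon)\leq\liminf_i\mu_i(\Omega_{\mathrm{c},j}^\epsilon)$), then letting $\epsilon_0\to 0$, and finally using $\frac{2\delta-j}{2}\geq\frac{2\delta-r_{\max}}{2}$ for $1\leq j\leq r_{\max}$, produces the stated bound $\mu(\Omega_{\mathrm{c}}^\epsilon)\leq\frac{4\log|\log\epsilon|}{(2\delta-r_{\max})|\log\epsilon|}$. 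Since this quantity tends to $0$ as $\epsilon\to 0$ and $\Omega_{\mathrm{nc}}^\epsilon\subset\mathcal{F}\core_\epsilon(X)$ is compact by geometric finiteness, no $\mu$-mass escapes, so $\mu$ is a probability measure and $\mu(\Omega_{\mathrm{nc}}^\epsilon)=1-\mu(\Omega_{\mathrm{c}}^\epsilon)$ satisfies the required lower bound. The main technical obstacle I anticipate is the careful verification of the Main Lemma bound on $K(V)$ once $V(0)$ and $V(N_i)$ are allowed to be nonzero, which requires the explicit form of the generalization sketched in the proof of Lemma~\ref{main_lemma} and careful tracking of the $\epsilon'$-dependent implicit constant so that it only contributes an additive $O(1)$ term to $\log K(V)$ and hence vanishes after dividing by $N_i$.
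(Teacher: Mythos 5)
Your entropy-sandwich reformulation is an appealing way to organize the estimates, and several of its ingredients (the R\'enyi--$2$ bound $H \geq -\log\sum_B\mu_i(B)^2$, the use of hypothesis~(2) to control $\sum_B\mu_i(B)^2$, the $T_a$-invariance identity $\sum_V \mu_i(Z(V))|V^{-1}(j)| = (N_i+1)\mu_i(\Omega_{\mathrm{c},j}^{\epsilon})$) are sound and do mirror pieces of the paper's actual computation. However, there are two genuine gaps, and closing them essentially forces you back onto the paper's route.

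The first and more serious gap is the one you flag yourself, the applicability of the Main Lemma once $V(0)$ or $V(N_i)$ is nonzero, but the obstruction is worse than an additive $O_{\epsilon'}(1)$ constant. The Main Lemma's covering argument fundamentally relies on the bound $d(o,\tilde g_1),d(o,\tilde g_2)<r_0$ (from Lemma~\ref{number_of_balls}), which holds because the trajectory at the interval endpoints lies in $\Omega_{\mathrm{nc}}^{\epsilon_{\mathtt{d}}'}$ and hence at bounded depth. The $\epsilon'$-generalization in Lemma~\ref{main_lemma} replaces $r_0$ by $r_0+\lceil|\log c_2\epsilon'|\rceil$, contributing the factor $e^{2\delta|\log\epsilon'|}=O_{\epsilon'}(1)$. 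But for your extended $Z(V)$'s with $V(0)\neq 0$, the endpoint can lie arbitrarily deep in $\Omega_{\mathrm{c}}^{\epsilon}$; the depth is bounded only by the distance to the first zero of $V$, which can be as large as $|I_1|\lesssim N_i$. This changes the translation length controlling $\Gamma_I$ from $L$ to $L+d_0$ with $d_0\lesssim L$, so the count becomes $e^{\frac{r_\xi}{2}(L+d_0)}\lesssim e^{r_\xi L}$, effectively doubling the cusp rank in the exponent. Since Beardon only gives $\delta>\frac{r_{\max}}{2}$, you cannot guarantee $2\delta>2r_\xi$, and the cover can blow up in an $N_i$-dependent way that does not vanish after dividing by $N_i$. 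The paper avoids this precisely by using hypothesis~(1) to restrict to $X_{\kappa,i}$, where the trajectory at times $\pm N_i'$ is at height $\geq H_i$, and then re-shifting by $\ll\epsilon_0 N_i$ to place the endpoints at bounded height before invoking the Main Lemma.

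The second gap is in your lower bound: the estimate $\sum_B\mu_i(B)^2 \leq C_{\epsilon_0}\lambda_i^{\delta-\alpha\epsilon_0}+o(1)$ does not yield $H_{\mu_i}(\xi^{N_i})\geq N_i(\delta-\alpha\epsilon_0)-O_{\epsilon_0}(1)$, because hypothesis~(1) gives no rate for the $o(1)$ term. If $\lambda_i\to 0$ faster than $\mu_i(\mathcal{F}\cusp_{H_i})\to 0$, the $o(1)$ dominates $\lambda_i^{\delta-\alpha\epsilon_0}$ and $-\log(\sum_B\mu_i(B)^2)$ falls far short of $N_i(\delta-\alpha\epsilon_0)$. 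This can be patched by conditioning on $\{\mathcal{F}\core_{H_i},\mathcal{F}\cusp_{H_i}\}$ and using $H_{\mu_i}(\xi)\geq\mu_i(\mathcal{F}\core_{H_i})H_{(\mu_i)_{\mathcal{F}\core_{H_i}}}(\xi)$, so that the R\'enyi--$2$ bound is applied only to atoms inside $\mathcal{F}\core_{H_i}$; but this is exactly the restriction the paper builds in from the start (its $X_i\subset T_a^{-N_i}(\Omega_{\mathrm{nc}}^{\epsilon})$). There is also a small internal inconsistency in your write-up: the $Z(V)$'s are indexed by $V$ on $\{0,\dots,N_i\}$ while $\xi^{N_i}$ is a two-sided Bowen partition; the ranges should match. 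Finally, the concluding step ``no $\mu$-mass escapes, so $\mu$ is a probability measure'' should be stated as a tightness consequence of the pre-limit bound on $\mu_i(\Omega_{\mathrm{c}}^{\epsilon})$ (via Prokhorov), not as a consequence of the bound on $\mu(\Omega_{\mathrm{c}}^{\epsilon})$, which is vacuous for the zero measure.

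In sum, your plan captures the right estimates, but both gaps are resolved in the paper's proof by an $X_{\kappa,i}$-type restriction supplied by hypothesis~(1), and once you add those restrictions your argument largely collapses into the paper's direct Cauchy--Schwarz bound on $\mu_i(X_{\kappa,i})$.
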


\begin{proof}
Without loss of generality, by passing to a subsequence, assume $(\mu_i)_{i\in\mathbb{N}}$ converges to $\mu$ in the weak-$\star$ topology.
For simplicity, for any $m\in\mathbb{Z}$ and $h>0$, let $\mathrm{Compact}(m,h) = T_{a}^{-m}(\Omega_{\mathrm{nc}}^{h})$ and $\mathrm{Cusp}(m,h) = T_{a}^{-m}(\Omega_{\mathrm{c}}^{h})$.

Let $\epsilon>0$ be small enough with respect to Lemma \ref{main_lemma}, and let
\begin{equation*}
\kappa>\kappa_{\epsilon}\coloneqq\frac{1}{2\delta-r_{\max}}\cdot\frac{4\log(|\log\epsilon|)}{|\log\epsilon|}.
\end{equation*}
Take $\epsilon_{0}>0$ small enough such that
\begin{equation}\label{eq_choice_of_epsilon_0}
g(\epsilon,\epsilon_{0})\coloneqq\frac{4\log(|\log\epsilon|)}{|\log\epsilon|}(1+3\epsilon_{0})+6(\delta+\alpha)\epsilon_{0}-(2\delta-r_{\max})(1+4\epsilon_{0})^{-1}(\kappa-\frac{\epsilon_{0}}{1+2\epsilon_{0}})<0    
\end{equation}
where $\alpha$ is as in the statement of Theorem \ref{theorem1}.
We may also assume that $\frac{\epsilon_{0}}{1+2\epsilon_{0}}<\kappa$,
and that $3\epsilon_{0}$ is small enough such that the assumptions
of Theorem \ref{theorem1} and the results of Lemma \ref{main_lemma}
hold. This is possible because
\begin{equation*}
\underset{\epsilon_{0}\to0^{+}}{\lim}g(\epsilon,\epsilon_{0})=\frac{4\log(|\log\epsilon|)}{|\log\epsilon|}-(2\delta-r_{\max})\kappa<0
\end{equation*}
by the choice of $\kappa$. The reason for this specific choice of $\epsilon_0$ will be clear at
the end of the proof.

Let $(\lambda_i)_{i\in\mathbb{N}}$ be as in the statement of Theorem \ref{theorem1}. Set the heights $H_{i}=\lambda_{i}^{\epsilon_{0}/4}$, where $i\in\mathbb{N}$ is large enough such that $\lambda_j<1$ for all $j\geq i$. Set $N_{i}=\lfloor-\frac{1}{2}\log\lambda_{i}\rfloor$ and $N_{i}^{\prime}=N_{i}+2\lfloor\epsilon_{0}N_{i}\rfloor$.
Set
\begin{equation*}
E_{\kappa,i}=\{x\in \mathcal{F}X:\:\frac{1}{2N_{i}^{\prime}+1}\underset{n=-N_{i}^{\prime}}{\overset{N_{i}^{\prime}}{\sum}}
1_{\mathrm{Cusp}(n,\epsilon)}(x)
>\kappa\}
\end{equation*}
and
\begin{equation*}
X_{\kappa,i}=\mathrm{Compact}(-N_{i}^{\prime},H_i)\cap\mathrm{Compact}(N_{i}^{\prime},H_i)\cap E_{\kappa,i}.
\end{equation*}

In what follows we will use Lemma~\ref{number of Z} and Lemma~\ref{main_lemma} to cover $X_{\kappa,i}$ by a relatively small amount of Bowen balls, and deduce that $\mu_{i}(X_{\kappa,i})$ is small. From this, the desired result will follow.

Note that once $H_{i}$ is small enough (i.e.\ $i$ is large enough), any trajectory of
a point 
$x\in\mathrm{Compact}(0,H_i)$ 
visits 
$\mathrm{Compact}(0,\tilde{\epsilon}_{\mathtt{d}})$
in up to $\lfloor\epsilon_{0}N_{i}\rfloor$ steps, either in the past
or the future. This is immediate from Lemma \ref{size_of_cusps}, since $\lceil|\log c_{2}H_{i}|\rceil\leq\lfloor\epsilon_{0}N_{i}\rfloor$.

It follows that
\begin{equation*}
\mathrm{Compact}(0,H_i)
\subset\underset{k=-\lfloor\epsilon_{0}N_{i}\rfloor}{\overset{\lfloor\epsilon_{0}N_{i}\rfloor}{\bigcup}}
\mathrm{Compact}(-k,\tilde{\epsilon}_{\mathtt{d}})
\end{equation*}
Therefore,
\begin{equation*}
X_{\kappa,i}\subset\underset{(k_{1},k_{2})\in\{-\lfloor\epsilon_{0}N_{i}\rfloor,\ldots,\lfloor\epsilon_{0}N_{i}\rfloor\}^{2}}{\bigcup}F_{k_{1},k_{2}}
\end{equation*}
for
\begin{equation*}
F_{k_{1},k_{2}}=\mathrm{Compact}(-N_{i}^{\prime}-k_{1},\tilde{\epsilon}_{\mathtt{d}})\cap \mathrm{Compact}(N_{i}^{\prime}-k_{2},\tilde{\epsilon}_{\mathtt{d}})\cap E_{\kappa,i}.
\end{equation*}

For any $(k_{1},k_{2})$, set $c=\lfloor-\frac{k_{1}+k_{2}}{2}\rfloor,\:d=\lfloor\frac{k_{1}-k_{2}}{2}\rfloor$
and $N=N_{i}^{\prime}+d$. Note that
\begin{equation*}
N_{i}+\lfloor\epsilon_{0}N_{i}\rfloor\leq N\leq N_{i}+3\lfloor\epsilon_{0}N_{i}\rfloor.
\end{equation*}
Then
\begin{equation*}
T_{a}^{c}F_{k_{1},k_{2}}\subset\mathrm{Compact}(-N,\tilde{\epsilon}_{\mathtt{d}})\cap\mathrm{Compact}(N,\tilde{\epsilon}_{\mathtt{d}})
\end{equation*}
or
\begin{equation*}
T_{a}^{c}F_{k_{1},k_{2}}\subset
\mathrm{Compact}(-N,\tilde{\epsilon}_{\mathtt{d}})\cap\mathrm{Compact}(N+1,\tilde{\epsilon}_{\mathtt{d}})
\end{equation*}
depending on whether $k_{1}$ and $k_{2}$ have the same parity or not. In any case,
\begin{equation*}
T_{a}^{c}F_{k_{1},k_{2}}\subset 
\mathrm{Compact}(-N,\epsilon)\cap \mathrm{Compact}(N,\epsilon)
\end{equation*}
assuming $\epsilon$ was initially chosen to be small enough.

Now, note that
\begin{equation*}
\frac{2N_{i}^{\prime}+1}{2N+1}\geq\frac{2N_{i}}{2(N_{i}+3\lfloor\epsilon_{0}N_{i}\rfloor)+1}\geq\frac{2N_{i}}{2(1+4\epsilon_{0})N_{i}}=(1+4\epsilon_{0})^{-1}
\end{equation*}
for large enough $i$, and that
\begin{equation*}
-\frac{|c-d|+|c+d|}{2N_{i}^{\prime}+1}\geq-\frac{2\lfloor\epsilon_{0}N_{i}\rfloor}{2(N_{i}+2\lfloor\epsilon_{0}N_{i}\rfloor)+1}\geq-\frac{\epsilon_{0}}{1+2\epsilon_{0}}.
\end{equation*}

Therefore, for all $x=T_{a}^{c}y\in T_{a}^{c}F_{k_{1},k_{2}}$,
\begin{align*}
\frac{1}{2N+1}\underset{n=-N}{\overset{N}{\sum}}1_{\mathrm{Cusp}(n,\epsilon)}(x)&=
\frac{1}{2N+1}\underset{n=-N}{\overset{N}{\sum}}1_{\mathrm{Cusp}(n+c,\epsilon)}(y)\\&=
\frac{1}{2N+1}\underset{n=-N_{i}^{\prime}+(c-d)}{\overset{N_{i}^{\prime}+(c+d)}{\sum}}1_{\mathrm{Cusp}(n,\epsilon)}(y)\\&\geq
\frac{2N_{i}^{\prime}+1}{2N+1}\frac{1}{2N_{i}^{\prime}+1}(\underset{n=-N_{i}^{\prime}}{\overset{N_{i}^{\prime}}{\sum}}1_{\mathrm{Cusp}(n,\epsilon)}(y)-|c-d|-|c+d|)\\&\geq
(1+4\epsilon_{0})^{-1}(\kappa-\frac{\epsilon_{0}}{1+2\epsilon_{0}}).
\end{align*}

We have obtained
\begin{align*}
T_{a}^{c}F_{k_{1},k_{2}}\subset\Big\{&x\in\mathrm{Compact}(-N,\epsilon)\cap\mathrm{Compact}(N,\epsilon):\:\\&
\hspace{-3mm}\big|\{n\in[-N,N]\cap\mathbb{Z}:\:x\in\mathrm{Cusp}(n,\epsilon)\}\big|\geq(1+4\epsilon_{0})^{-1}(\kappa-\frac{\epsilon_{0}}{1+2\epsilon_{0}})(2N+1)\Big\}.
\end{align*}

By Lemma \ref{number of Z}, $T_a^{c}F_{k_{1},k_{2}}$ can be covered by
\begin{equation*}
\ll_{\epsilon}e^{\frac{3\log(|\log\epsilon|)}{|\log\epsilon|}N}\leq
e^{\frac{3\log(|\log\epsilon|)}{|\log\epsilon|}(1+3\epsilon_{0})N_{i}}
\end{equation*}
sets $Z_{+}(V,\epsilon)$ with 
\begin{align*}
    |V^{-1}(\{1,\ldots,\mathtt{d}-1\})|&\geq (2N+1)(1+4\epsilon_{0})^{-1}(\kappa-\frac{\epsilon_{0}}{1+2\epsilon_{0}})\\&\geq 2N_{i}(1+4\epsilon_{0})^{-1}(\kappa-\frac{\epsilon_{0}}{1+2\epsilon_{0}}).
\end{align*}
Each set is covered, due to Lemma \ref{main_lemma}, by
\begin{align*}
f&\ll_{\epsilon} C^{\frac{4N}{|\log\epsilon|}}\cdot e^{2N\delta-\frac{2\delta-r_{\max}}{2}\cdot|V^{-1}(\{1,\ldots,\mathtt{d}-1\})|}\\&
\ll_{\epsilon} \exp\left(\left[\frac{4(1+3\epsilon_0)\log C}{|\log\epsilon|}+2(1+3\epsilon_0)\delta-(2\delta-r_{\max})(1+4\epsilon_{0})^{-1}(\kappa-\frac{\epsilon_{0}}{1+2\epsilon_{0}})\right]N_{i}\right)
\end{align*}
Bowen $(N,\eta)$-balls.

Therefore $F_{k_{1},k_{2}}$ can be covered by that many $T_{a}^{-c}$
images of Bowen $(N,\eta)$-balls. Each such image is contained in a Bowen $(N-|c|,\eta)$-ball. Note that these balls are in fact subsets of Bowen $(N_{i},\eta)$-balls because
\begin{equation*}
N-|c|=(N_{i}+2\lfloor\epsilon_{0}N_{i}\rfloor)+d-|c|\geq N_{i}.
\end{equation*}

Therefore, $X_{\kappa,i}$ can be covered by 
\begin{align*}
l&\ll_{\epsilon}(\epsilon_{0}N_{i})^{2}\cdot e^{\frac{3\log(|\log\epsilon|)}{|\log\epsilon|}(1+3\epsilon_{0})N_{i}}\cdot f\\&
\ll_{\epsilon,\epsilon_0}N_{i}^{2}\exp\Big(\Big[\frac{4\log(|\log\epsilon|)}{|\log\epsilon|}(1+3\epsilon_{0})+2(1+3\epsilon_0)\delta\\
&\hspace{2.5cm}-(2\delta-r_{\max})(1+4\epsilon_{0})^{-1}(\kappa-\frac{\epsilon_{0}}{1+2\epsilon_{0}})\Big]N_{i}\Big)
\end{align*}
Bowen $(N_{i},\eta)$-balls, denoted $S_{1},\ldots S_{l}$. We will use this cover to bound $\mu_i(X_{\kappa,i})$.

Note that these balls satisfy 
$$T_{a}^{N_i}S_j\subset\mathrm{Compact}(-N_i+N+c,\epsilon).$$
Moreover, $$-N_i+N+c=2\lfloor\epsilon_0 N_i\rfloor+(d+c)$$
and so $|-N_i+N+c|\leq 4\lfloor\epsilon_0 N_i\rfloor$.
It follows, using Lemma \ref{size_of_cusps}, that $T_{a}^{N_i}S_j\subset\Omega_{\mathrm{nc}}^{\lambda_{i}^{3\epsilon_0}}$ once $i$ is large enough.\\

Let us make the cover disjoint, i.e.\ define $\tilde{S}_{j}=S_{j}\smallsetminus\underset{k=1}{\overset{j-1}{\bigcup}}S_{k}$ for all $1\leq j\leq l$. Note that $S_{j}$ is a Bowen $(N_{i},\eta)$-ball and so $T_a^{N_i}S_j$ is a backward Bowen $(2N_i,\eta)$-ball. As such, it has width at most
\begin{equation*}
2\eta e^{-2N_{i}}\leq\lambda_{i}
\end{equation*}
in the $N^{-}$ direction, and width at most $1$ in the $MA$ and $N^+$ directions. 
Therefore, we have obtained
\begin{equation*}
(T_a^{N_i}\tilde{S}_{j})\times(T_a^{N_i}\tilde{S}_{j})\subset\left\{(x,y)\in \mathcal{F}\core_{\lambda_{i}^{3\epsilon_0}}(X)\times \mathcal{F}\core_{\lambda_{i}^{3\epsilon_0}}(X):\:x\in yB_{1}^{N^{+}}B_{1}^{MA}B_{\lambda_{i}}^{N^{-}}\right\}
\end{equation*}
for all $j$. So, by the assumption of Theorem \ref{theorem1},
\begin{equation*}
\mu_{i}\times\mu_{i}(\underset{j=1}{\overset{l}{\bigcup}}(T_a^{N_i}\tilde{S}_{j})\times(T_a^{N_i}\tilde{S}_{j}))\ll_{\epsilon_0}\lambda_{i}^{\delta-3\alpha\epsilon_{0}}\leq e^{-2(\delta-3\alpha\epsilon_{0})N_{i}}.
\end{equation*}
As $\{\tilde{S}_{j}\times\tilde{S}_{j}\}_{j=1}^{l}$ is a mutually
disjoint collection, and since $\mu_i$ is invariant under the frame flow,
\begin{align*}
\underset{j=1}{\overset{l}{\sum}}\mu_{i}(\tilde{S}_{j})^{2}&=\underset{j=1}{\overset{l}{\sum}}\mu_{i}(T_{a}^{N_i}\tilde{S}_{j})^{2}=\underset{j=1}{\overset{l}{\sum}}\mu_{i}\times\mu_{i}(T_{a}^{N_i}\tilde{S}_{j}\times T_{a}^{N_i}\tilde{S}_{j})\\&=\mu_{i}\times\mu_{i}(\underset{j=1}{\overset{l}{\bigcup}}T_{a}^{N_i}\tilde{S}_{j}\times T_{a}^{N_i}\tilde{S}_{j})\ll_{\epsilon_0}e^{-2(\delta-3\alpha\epsilon_{0})N_{i}}.
\end{align*}
As $\{\tilde{S}_{j}\}_{j=1}^{l}$ is a covering of $X_{\kappa,i}$,
\begin{align*}
\mu_{i}(X_{\kappa,i})^{2}&\leq(\underset{i=1}{\overset{l}{\sum}}\mu_{i}(\tilde{S}_{j}))^{2}\leq l\cdot\underset{i=1}{\overset{l}{\sum}}\mu_{i}(\tilde{S}_{j})^{2}\\&    \ll_{\epsilon,\epsilon_0}N_{i}^{2}\exp\Big(\Big[\frac{4\log(|\log\epsilon|)}{|\log\epsilon|}(1+3\epsilon_{0})+6(\delta+\alpha)\epsilon_0\\&-(2\delta-r_{\max})(1+4\epsilon_{0})^{-1}(\kappa-\frac{\epsilon_{0}}{1+2\epsilon_{0}})\Big]N_{i}\Big).
\end{align*}
By the choice of $\epsilon_{0}$ in Equation \eqref{eq_choice_of_epsilon_0}, the
latter exponent is negative, and therefore (as $\underset{i\to\infty}{\lim}N_{i}=\infty$)
$\underset{i\to\infty}{\lim}\mu_{i}(X_{\kappa,i})=0$.\\

Since $\mu_{i}$ is $T_{a}$-invariant and $\supp(\mu_{i})\subset\Omega_{\mathcal{F}}$, and using the fact that
\begin{equation*}
X = X_{\kappa,i} \cup \mathrm{Compact}(-N_{i}^{\prime},H_i)^{c} \cup \mathrm{Compact}(N_{i}^{\prime},H_i)^{c} \cup E_{\kappa,i}^{c},
\end{equation*}
we have for all $n\in\mathbb{N}$
\begin{align*}
    \mu_{i}(\Omega_{\mathrm{c}}^{\epsilon})&=\mu_i(\mathrm{Cusp}(n,\epsilon))=\int_{X} 1_{\mathrm{Cusp}(n,\epsilon)}d\mu_i\\&
    \leq \mu_i(X_{\kappa,i}) + \mu_i(\mathrm{Compact}(-N_{i}^{\prime},H_i)^{c}) +\mu_i(\mathrm{Compact}(N_{i}^{\prime},H_i)^{c}) \\&+\int_{E_{\kappa,i}^{c}} 1_{\mathrm{Cusp}(n,\epsilon)}d\mu_i\\&
    =\mu_{i}(X_{\kappa,i})+2\mu_{i}(\Omega_{\mathrm{c}}^{H_i})+\int_{E_{\kappa,i}^{c}} 1_{\mathrm{Cusp}(n,\epsilon)}d\mu_i
\end{align*}
and so, by the definition of $E_{\kappa,i}$,
\begin{align*}
\numberthis \label{inequality for escape of mass}
    \mu_{i}(\Omega_{\mathrm{c}}^{\epsilon})&=\frac{1}{2N_i^{\prime}+1}\sum_{n=-N_i^\prime}^{N_i^\prime}\mu_{i}(\mathrm{Cusp}(n,\epsilon))\\&
    \leq \mu_{i}(X_{\kappa,i})+2\mu_i(\Omega_{\mathrm{c}}^{H_i})+\int_{E_{\kappa,i}^{c}}\frac{1}{2N_i^{\prime}+1}\sum_{n=-N_i^\prime}^{N_i^\prime} 1_{\mathrm{Cusp}(n,\epsilon)}d\mu_i\\&
    \leq\mu_i(X_{\kappa,i})+2\mu_i(\Omega_{\mathrm{c}}^{H_i})+\kappa.
\end{align*}

Note that $\underset{\epsilon\to0^{+}}{\lim}\kappa_{\epsilon}=0$ and so we can take $\kappa$ to be as small as desired, if $\epsilon$ is small enough. Moreover, the first two terms in \eqref{inequality for escape of mass} tend to zero as $i\to\infty$. So, we see that for all $\upsilon>0$ there is $\epsilon>0$ small enough and $j\in\mathbb{N}$ such that  $\mu_i(\Omega_{c}^{\epsilon})<\upsilon$ for all $i>j$. This shows that the measures are ``almost'' supported on a compact set, namely that the sequence of measures is ``tight''. It follows from Prokhorov's Theorem \cite{prokhorov1956convergence} that the sequence $(\mu_i)_{i\in\mathbb{N}}$ converges in the weak topology, and its limit $\mu$ is a probability measure. 

Assume that $\epsilon$ was chosen such that $\mu(\partial\Omega_{\mathrm{c}}^{\epsilon})=0$. Then, taking the limit of Equation \eqref{inequality for escape of mass} as $i\to\infty$, we get from weak convergence that $\mu(\Omega_{\mathrm{nc}}^{\epsilon})\geq1-\kappa$. Since it holds for all $\kappa>\kappa_\epsilon$, it follows that
\begin{equation}\label{bound_on_measure_of_cusp}
    \mu(\Omega_{\mathrm{nc}}^{\epsilon})\geq 1-\kappa_\epsilon
\end{equation}
as well, as desired.\\

Note that since $\mu$ is a finite measure, there cannot be more than countably many values of $\epsilon^\prime$ for which $\mu(\partial\Omega_{\mathrm{c}}^{\epsilon^\prime})>0$. So for such an $\epsilon^\prime$ we may choose arbitrarily close $\epsilon_i>\epsilon^\prime$, for which Equation \eqref{bound_on_measure_of_cusp} holds, and take the limit as $\epsilon_i\to\epsilon^\prime$ to obtain the result for $\epsilon^{\prime}$ as well.
\end{proof}

\begin{lemma}\label{lemma_2}For every $0<\epsilon<\epsilon_{\mathtt{d}}$ there is a finite partition $\mathscr{P}$ of $\Omega_{\mathcal{F}}$ such
that for every $\kappa\in(0,\frac{1}{2})$ and every $N>0$, ``most'' of the
refinement $\mathscr{P}_{N}=\underset{n=-N}{\overset{N}{\bigvee}}T_{a}^{-n}\mathscr{P}$
is controlled by Bowen $(N,\eta)$-balls, in the sense that 
there is a subset $X^{\prime}\subset \Omega_{\mathcal{F}}$ such that:
\begin{enumerate}
\item\label{lemma_2:condition 1} $X^{\prime}=\underset{j=1}{\overset{l}{\bigcup}}S_{j}$ for $S_{j}\in\mathscr{P}_{N}$.
\item\label{lemma_2:condition 2} $X^{\prime}\subset T_{a}^{-N}(\Omega_{\mathrm{nc}}^{\epsilon})$.
\item Each $S_{j}$ is contained in a union of at most $3^{(\mathtt{d}-1)\kappa(2N+1)}$
Bowen $(N,\eta)$-balls.
\item $\mu(X^{\prime})\geq1-2\kappa^{-1}\mu(\Omega_{\mathrm{c}}^{\epsilon})$ for every $A$-invariant probability measure $\mu$.

\item For a given $\mu$, $\mathscr{P}$ can be chosen such that $\mu(\partial A)=0$
for all $A\in\mathscr{P}$.
\end{enumerate}
\end{lemma}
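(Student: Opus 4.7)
My plan is to build $\mathscr{P}$ with one ``cusp atom'' $P_{0}=\Omega_{\mathrm{c}}^{\epsilon}$ and a finite collection $P_{1},\dots,P_{m}$ partitioning the non-cusp part $\Omega_{\mathrm{nc}}^{\epsilon}$ into pieces of small diameter (say at most $\eta/10$). This is possible since $\Omega_{\mathrm{nc}}^{\epsilon}\subset \mathcal{F}\core_{\epsilon}(X)$ is precompact by the geometric finiteness of $\Gamma$. I take $X^{\prime}$ to be the union of those atoms $S=\bigcap_{n=-N}^{N}T_{a}^{-n}P_{i_{n}}\in\mathscr{P}_{N}$ satisfying (a)~$i_{N}\neq 0$ and (b)~$|\{n\in[-N,N]:\,i_{n}=0\}|\leq\kappa(2N+1)$. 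Conditions~\ref{lemma_2:condition 1} and \ref{lemma_2:condition 2} are then immediate from the definition, the latter because $i_{N}\neq0$ forces $T_{a}^{N}x\in P_{i_{N}}\subset\Omega_{\mathrm{nc}}^{\epsilon}$.

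For the measure estimate (condition~4), I bound the complement of $X^{\prime}$ by two elementary estimates. The set violating~(a) is $T_{a}^{-N}(\Omega_{\mathrm{c}}^{\epsilon})$, of measure $\mu(\Omega_{\mathrm{c}}^{\epsilon})$ by $T_{a}$-invariance; the set violating~(b) is controlled via Markov's inequality applied to $f_{N}(x)=|\{n\in[-N,N]:T_{a}^{n}x\in\Omega_{\mathrm{c}}^{\epsilon}\}|$, whose integral equals $(2N+1)\mu(\Omega_{\mathrm{c}}^{\epsilon})$, yielding measure $\leq\kappa^{-1}\mu(\Omega_{\mathrm{c}}^{\epsilon})$. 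Summing and using $\kappa<1/2$ gives $\mu((X^{\prime})^{c})\leq(1+\kappa^{-1})\mu(\Omega_{\mathrm{c}}^{\epsilon})\leq 2\kappa^{-1}\mu(\Omega_{\mathrm{c}}^{\epsilon})$.

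The main geometric content is condition~3. For an atom $S\subset X^{\prime}$ and a reference $x\in S$, each constraint $T_{a}^{n}y\in P_{i_{n}}$ with $i_{n}\neq 0$ forces $d(T_{a}^{n}y,T_{a}^{n}x)\ll\eta$. Writing $y=x\cdot g$ with $g$ small, we have $T_{a}^{n}y=T_{a}^{n}x\cdot\operatorname{Ad}(a_{-n})(g)$; by the root decomposition of $\mathfrak{so}(1,\mathtt{d})$, $\operatorname{Ad}(a_{-n})$ multiplies the $\mathfrak{n}^{+}$-component of $g$ by $e^{n}$ and the $\mathfrak{n}^{-}$-component by $e^{-n}$, so the constraint reads $\|g_{N^{+}}\|\ll\eta e^{-n}$ and $\|g_{N^{-}}\|\ll\eta e^{n}$. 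The tightest $N^{+}$-bound therefore comes from the largest $n_{+}^{\ast}\in[0,N]$ with $i_{n_{+}^{\ast}}\neq 0$ (which exists by~(a)), and the tightest $N^{-}$-bound from the most negative $n_{-}^{\ast}\in[-N,0]$ with $i_{n_{-}^{\ast}}\neq 0$ (which exists by~(b) when $\kappa<1/2$). Setting $m_{+}=N-n_{+}^{\ast}$ and $m_{-}=N+n_{-}^{\ast}$, the intervals $(n_{+}^{\ast},N]$ and $[-N,n_{-}^{\ast})$ are disjoint blocks of cusp indices, so $m_{+}+m_{-}\leq|\{n:i_{n}=0\}|\leq\kappa(2N+1)$. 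Since a Bowen $(N,\eta)$-ball has width $\sim\eta e^{-N}$ in each of $N^{\pm}$ and $\sim\eta$ in $MA$, covering $S$ requires at most $\ll e^{(\mathtt{d}-1)(m_{+}+m_{-})}$ Bowen $(N,\eta)$-balls, which (after absorbing the implicit constants by taking the atom diameter small enough and replacing base $e$ by $3$) is bounded by $3^{(\mathtt{d}-1)\kappa(2N+1)}$.

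The only residual technicality is condition~5, which I regard as the main obstacle. The small atoms $P_{i}$ ($i\geq 1$) can be chosen as differences of closed balls whose radii avoid the countable set of values giving positive $\mu$-mass on the corresponding sphere. The delicate atom is $P_{0}=\Omega_{\mathrm{c}}^{\epsilon}$, whose boundary may carry $\mu$-mass. I would address this by replacing $\epsilon$ by some $\epsilon^{\prime}>\epsilon$ (depending on $\mu$) outside the countable set of discontinuities of $\epsilon\mapsto\mu(\Omega_{\mathrm{c}}^{\epsilon})$; then $P_{0}=\Omega_{\mathrm{c}}^{\epsilon^{\prime}}\supset\Omega_{\mathrm{c}}^{\epsilon}$ has $\mu$-null boundary and still satisfies condition~\ref{lemma_2:condition 2}, while by choosing $\epsilon^{\prime}$ close enough to $\epsilon$ the inflation $\mu(\Omega_{\mathrm{c}}^{\epsilon^{\prime}})-\mu(\Omega_{\mathrm{c}}^{\epsilon})$ is absorbed into the slack factor $2$ of condition~4.
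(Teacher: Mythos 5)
Your construction of $\mathscr{P}$, the definition of $X'$, and the Markov-inequality bound for condition~4 are all essentially the same as the paper's. The genuine difference is the covering argument for condition~3. The paper shifts to $S'=T_a^N S$ and covers it by backward Bowen balls \emph{inductively}, marching time step by time step from $0$ to $2N$: at a non-cusp time the partition constraint keeps the count unchanged, and at a cusp time each ball is split into exactly $3^{\mathtt{d}-1}$ smaller ones (since $e<3$), giving a clean total of $3^{(\mathtt{d}-1)|W|}$ with no implicit constants. You instead give a direct bilateral estimate: you find the outermost non-cusp times $n_+^*, n_-^*$, use them to bound the $N^{\pm}$-extent of $S$, and observe that the overshoots $m_+ , m_-$ are disjoint blocks of cusp indices so $m_++m_-\leq|W|$. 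This is a genuinely different (and arguably more transparent) accounting; it also gives the sharper exponent $m_++m_-\leq|W|$, and your convexity remark (the deviation $\|\operatorname{Ad}(a_{-n})\log g\|$ is convex in $n$) justifies why only the two extreme good times matter. What the inductive argument buys in exchange is cleanliness of the constant: it gives $3^{(\mathtt{d}-1)|W|}$ exactly, whereas your direct argument produces an implicit constant that you then have to absorb into the gap between $e$ and $3$.

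Two minor caveats, neither fatal. First, to convert the statement ``$T_a^n x$ and $T_a^n y$ lie in the same atom $P_{i_n}$'' into the Lie-group statement ``$\operatorname{Ad}(a_{-n})g$ is small in $G$'', you need the atom diameter to be smaller than the injectivity radius of $\Omega_{\mathrm{nc}}^{\epsilon}$, which depends on $\epsilon$ (not only on $\eta$); your choice ``at most $\eta/10$'' should really be something like $\alpha\min\{\eta,l_0,\epsilon\}$ as in the paper. Second, the constant absorption at $m_++m_-=0$ needs a sentence: in that case $n_+^*=N$, $n_-^*=-N$, and with the atom diameter small enough $S$ sits inside a single Bowen $(N,\eta)$-ball, so the count is $1=3^0$; once $\rho/\eta$ is fixed small enough the remaining tiling constants are absorbed by $(3/e)^{\mathtt{d}-1}>1$ for $m_++m_-\geq 1$. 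Your handling of condition~5 matches the paper's (replace $\epsilon$ by a nearby continuity point $\epsilon'>\epsilon$ and use the slack between $1+\kappa^{-1}$ and $2\kappa^{-1}$).
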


\begin{proof}
Let $\rho>0$ be small enough with respect to the injectivity radius of $\Omega_{\mathrm{nc}}^{\epsilon}$, for instance $\rho=\alpha\min\{\eta,l_0,\epsilon\}$, for $l_0$ the constant from Lemma \ref{deal_with_elliptic} and $\alpha\ll 1$.

Take a finite partition $\{P_{1},\ldots,P_{m}\}$ of $\Omega_{\mathrm{nc}}^{\epsilon}$
whose elements are contained in the quotients by $\Gamma$ of balls
of radius $\rho$, namely $\Gamma k_{i}B_{\rho}^{G}$
for $k_{i}\in G$. Of course, we may assume that these sets are mutually disjoint. 
Moreover, if $\mu$ is given, we can choose this partition to satisfy $\mu(\partial P_{j})=0$ for all $1\leq j\leq m$, since for all $x\in\Gamma\backslash G$ we have $\mu(\partial B_{r}(x))=0$ for all but countably many $r>0$.  
Assume, for now, that $\mu(\partial\Omega_{\mathrm{c}}^{\epsilon})=0$ as well.

Define $\mathscr{P}=\{\Omega_{\mathrm{c}}^{\epsilon},P_{1},\ldots,P_{k}\}$ and take some $S=\underset{n=-N}{\overset{N}{\bigcap}}T_{a}^{n}Q_{n}\in\mathscr{P}_{N}$. It is clear that for all $x\in S$ and $-N\leq n\leq N$, 
\begin{equation*}
T_{a}^{-n}x\in \Omega_{\mathrm{c}}^{\epsilon}\iff Q_{n}=\Omega_{\mathrm{c}}^{\epsilon}.
\end{equation*}
In particular, the function
\begin{equation*}
f(x)=\frac{1}{2N+1}\underset{n=-N}{\overset{N}{\sum}}1_{\Omega_{\mathrm{c}}^{\epsilon}}(T_{a}^{n}x)
\end{equation*}
is constant on $S$.
Define
\begin{equation*}
X^{\prime}=\{x\in T_{a}^{-N}( \Omega_{\mathrm{nc}}^{\epsilon}):\:f(x)\leq\kappa\}.
\end{equation*}
Given $\mu$,
\begin{align*}
\mu(\Omega_{\mathrm{c}}^{\epsilon})&=\frac{1}{2N+1}\underset{n=-N}{\overset{N}{\sum}}\mu(T_{a}^{n}( \Omega_{\mathrm{c}}^{\epsilon}))=\int fd\mu=\int_{f(x)>\kappa}fd\mu+\int_{f(x)\leq\kappa}fd\mu\\&\geq\kappa\cdot\mu(\{x\in\mathcal{F}X:\:f(x)>\kappa\})+0
\end{align*}
Set $A=T_{a}^{-N}(\Omega_{\mathrm{nc}}^{\epsilon})$ and $B=\{x\in \mathcal{F}X:\:f(x)\leq\kappa\}$
(and so $X^{\prime}=A\cap B)$. Then, as $\mu$ is a $T_a$-invariant probability measure supported
on $\Omega_{\mathcal{F}}$,
\begin{equation*}
\mu(X^{\prime})\geq1-\mu(A^{c})-\mu(B^{c})\geq1-(1+\kappa^{-1})\mu(\Omega_{\mathrm{c}}^{\epsilon})\geq1-2\kappa^{-1}\mu(\Omega_{\mathrm{c}}^{\epsilon}).
\end{equation*}
Clearly $X^{\prime}$ satisfies conditions \ref{lemma_2:condition 1}-\ref{lemma_2:condition 2} of the lemma, as it
is precisely the union of elements $S=\underset{n=-N}{\overset{N}{\bigcap}}T_{a}^{n}Q_{n}$
such that $Q_{-N}\not=\Omega_{\mathrm{c}}^{\epsilon}$, and
$Q_{n}=\Omega_{\mathrm{c}}^{\epsilon}$ for up to $\kappa(2N+1)$
different $-N\leq n\leq N$.

It remains only to cover each such element by $3^{(\mathtt{d}-1)\kappa(2N+1)}$
Bowen $(N,\eta)$-balls. For simplicity, for such $S$ define
\begin{equation*}
S^{\prime}=T_{a}^{N}S=\underset{n=-N}{\overset{N}{\bigcap}}T_{a}^{N+n}Q_{n}=\underset{n=0}{\overset{2N}{\bigcap}}T_{a}^{n}Q_{n-N}
\end{equation*}
and
\begin{equation*}
W=\{0\leq n\leq2N:\:Q_{n-N}=\Omega_{\mathrm{c}}^{\epsilon}\}.
\end{equation*}
We will cover $S^{\prime}$ by $3^{(\mathtt{d}-1)\kappa(2N+1)}$ backward Bowen $(2N,\rho)$-balls. By taking images of these balls by $T_{a}^{-N}$, we will get a cover of $S$ by the same amount of (two-sided) Bowen $(N,\rho)$ balls. As $\rho<\eta$, each such Bowen $(N,\rho)$-ball
is contained in a Bowen $(N,\eta)$-ball, which will give the desired result.\\

We prove the following claim by induction: for all $0\leq n\leq2N$, the
set $S^{\prime}$ can be covered by $3^{(\mathtt{d}-1)|\{0,\ldots,n\}\cap W|}$
many backward Bowen $(n,\rho)$-balls.

First, for $n=0$, as
\begin{equation*}
S^{\prime}\subset Q_{-N}\in\{P_{1},\ldots,P_{k}\},
\end{equation*}
and $P_{j}\subset\Gamma k_{j}B_{\rho}^{G}$, the result is clear
for $3^{0}$ sets, namely $P_{j}$.

Assume we covered $S^{\prime}$ by $3^{(\mathtt{d}-1)|\{0,\ldots,n\}\cap W|}$
many sets for some $n\in[0,2N-1]$. Let $\Gamma kB_{n,\rho}^{-}$
be one of those. If $n+1\not\in W$ then $S^{\prime}\subset T_{a}^{n+1}P_{j}$ for some $j$. Therefore $$S^{\prime}\cap\Gamma kB_{n,\rho}^{-}\subset\Gamma kB_{n,\rho}^{-}\cap\Gamma k_{j}B_{\rho}^{G}a^{n+1}.$$
Since $S^\prime\subset \Omega_{\mathrm{nc}}^{\epsilon}$, and $\rho$ was chosen to be small enough with respect to the injectivity radius, it follows that in fact $$S^\prime\cap\Gamma kB_{n,\rho}^{-}\subset \Gamma k^{\prime}B_{n+1,\rho}^{-}$$ for some $k^\prime\in G$.
Therefore, by replacing $\Gamma kB_{n,\rho}^{-}$ with $\Gamma k^{\prime}B_{n+1,\rho}^{-}$,
for each set $\Gamma kB_{n,\rho}^{-}$ in the given cover, we
obtain a new cover of $S^{\prime}$ by backward Bowen $(n+1,\rho)$-balls,
again of size $3^{(\mathtt{d}-1)|\{0,\ldots,n\}\cap W|}=3^{(\mathtt{d}-1)|\{0,\ldots,n+1\}\cap W|}$.

Otherwise, $n+1\in W$. In this case we simply split each backward
Bowen $(n,\rho)$-ball into $3^{\mathtt{d}-1}$ backward Bowen
$(n+1,\rho)$-balls, each of them smaller in the $N^{-}$ directions
by a factor of $e<3$. We emerge with $$3^{(\mathtt{d}-1)}\cdot3^{(\mathtt{d}-1)|\{0,\ldots,n\}\cap W|}=3^{(\mathtt{d}-1)|\{0,\ldots,n+1\}\cap W|}$$
many sets of the right form.
This proves the claim.

To conclude, in the last step of the induction, as $|W|\leq\kappa(2N+1)$
(since $S\subset X^{\prime}$), we get up to $3^{(\mathtt{d}-1)\kappa(2N+1)}$
sets as desired.\\

In order to finish the proof, one only needs to treat the case where $\mu(\partial\Omega_{\mathrm{c}}^{\epsilon})>0$. In this case, choose $\epsilon^{\prime}>\epsilon$ such that $\mu(\Omega_{\mathrm{c}}^{\epsilon^\prime})<\frac{4}{3}\mu(\Omega_{\mathrm{c}}^{\epsilon})$ and $\mu(\partial\Omega_{\mathrm{c}}^{\epsilon^{\prime}})=0$. This is possible because, as we mentioned, there are only countably many cusp regions whose boundary is of positive measure. Then the partition $X^{\prime}$ that was constructed for $\epsilon^{\prime}$ works for $\epsilon$ as well.
\end{proof}

We now prove item \ref{thm1 item2} of Theorem \ref{theorem1}, which is the main result of the theorem.
\begin{proof}[Proof of Theorem \ref{theorem1}]
Let $\mu$ be a weak-$\star$ limit of a subsequence of $(\mu_{n})_{n\in\mathbb{N}}$. Without loss of generality, by passing to a subsequence, assume $(\mu_n)_{n\in\mathbb{N}}$ converges to $\mu$.
Fix some $0<\epsilon<\epsilon_{\mathtt{d}}$, small enough with respect
to all previous lemmas, and for which $\kappa=\mu(\Omega_{\mathrm{c}}^\epsilon)^{1/2}<\frac{1}{2}$ and $\mu(\partial\Omega_{\mathrm{c}}^{\epsilon})=0$.
Let $\mathscr{P}$ be the partition from Lemma \ref{lemma_2}, all
whose elements have $\mu$-null boundaries.

Let $\epsilon_{0}>0$ be given, small enough so that the assumptions
of Theorem \ref{theorem1} hold.
Let $m\in\mathbb{N}$ be such that $H_{i}=\lambda_{i}^{\epsilon_{0}}<\epsilon$
for all $i\geq m$, and fix some $i\geq m$. Set $N_{i}=\lfloor-\frac{1}{2}\log\lambda_{i}\rfloor$. Define $X_{i}$ to be the set guaranteed in Lemma \ref{lemma_2},
with respect to $N_{i}$ and $\kappa$.
Here we assumed
$\kappa=\mu(\Omega_{\mathrm{c}}^{\epsilon})^{1/2}>0$. Otherwise, this is essentially the convex co-compact case, and we may set instead any $0<\kappa<\frac{1}{2}$ to obtain, by Lemma \ref{lemma_2}, a set $X_i$ of full $\mu_i$-measure, and continue the same way.

For all $P=S_{j}\in\mathscr{P}_{N_{i}}$ in the union $\underset{j=1}{\overset{l}{\bigcup}}S_{j}$
defining $X_{i}$, there is a covering $P\subset\underset{k=1}{\overset{3^{(\mathtt{d}-1)\kappa(2N_{i}+1)}}{\bigcup}}A_{k}$
by Bowen $(N_{i},\eta)$-balls. Define 
\begin{equation*}
\tilde{P}_{n}=(A_{n}\cap P)\smallsetminus\underset{k=1}{\overset{n-1}{\bigcup}}A_{k}
\end{equation*}
for all $n\geq 1$.
Then $\{\tilde{P}_{n}\}_{n=1}^{3^{(\mathtt{d}-1)\kappa(2N_{i}+1)}}$ is
a collection of mutually disjoint sets, each contained in a Bowen $(N_{i},\eta)$-ball,
which their union is $P$. So $\{\tilde{P}_{n}\}_{n=1}^{3^{(\mathtt{d}-1)\kappa(2N_{i}+1)}}$
is a measurable partition of $P=S_{j}$.

Define
\begin{equation*}
\mathcal{Q}_{i}=\underset{P\in\mathscr{P}_{N_{i}}:\:P\cap X_{i}=\emptyset}{\bigcup}\{P\}\cup\underset{P\in\mathscr{P}_{N_{i}}:\:P\cap X_{i}\not=\emptyset}{\bigcup}\{\tilde{P}_{n}\}_{n=1}^{3^{(\mathtt{d}-1)\kappa(2N_{i}+1)}}.
\end{equation*}
Clearly, $Q_{i}$ is a measurable partition of $X$.

Take $A\in\mathcal{Q}_{i},\:B\in\mathscr{P}_{N_{i}}$.
\begin{enumerate}
\item Assume $A=P$ for $P\in\mathscr{P}_{N_{i}}$ such that $P\cap X_{i}=\emptyset$. Then $A,B\in\mathscr{P}_{N_{i}}$ and so $A\cap B=\emptyset$ or $A=B$,
as the elements of $\mathscr{P}_{N_{i}}$ are mutually disjoint.

\item Otherwise, $A=\tilde{P}_{n}$ for $P\in\mathscr{P}_{N_{i}}$ such
that $P\subset X_{i}$ (that is, $P$ is one of the sets defining
$X_{i}$). Then $A\cap B=\emptyset$ unless $B=P$, in which case $A\cap B=A$.

\end{enumerate}
In particular, for a given $B\in\mathscr{P}_{N_{i}}$, we have
\begin{equation*}
\log((\mu_{i})_{B}(A))\cdot(\mu_{i})_{B}(A)\not=0
\end{equation*}
only for at most $3^{(\mathtt{d}-1)\kappa(2N_{i}+1)}$ elements $A\in\mathcal{Q}_{i}$, where $(\mu_i)_B$ is the restriction of $\mu_i$ to $B$, normalized to be a probability measure. 
Therefore the entropy satisfies
\begin{equation*}
H_{(\mu_{i})_{B}}(\mathcal{Q}_{i})\leq\log3^{(\mathtt{d}-1)\kappa(2N_{i}+1)}=\log3^{\mathtt{d}-1}\cdot\kappa(2N_{i}+1).
\end{equation*}

We get that the conditional entropy satisfies
\begin{equation*}
H_{\mu_{i}}(\mathcal{Q}_{i}|\mathscr{P}_{N_{i}})=\underset{B\in\mathscr{P}_{N_{i}}}{\sum}\mu_{i}(B)\cdot H_{(\mu_{i})_{B}}(\mathcal{Q}_{i})\leq\log3^{\mathtt{d}-1}\cdot\kappa(2N_{i}+1).
\end{equation*}
As $\mathcal{Q}_{i}$ is finer than $\mathscr{P}_{N_{i}}$, we get
$\mathcal{Q}_{i}=\mathcal{Q}_{i}\vee\mathscr{P}_{N_{i}}$ and so
\begin{equation*}
H_{\mu_{i}}(\mathcal{Q}_{i})=H_{\mu_{i}}(\mathscr{P}_{N_{i}})+H_{\mu_{i}}(\mathcal{Q}_{i}|\mathscr{P}_{N_{i}})
\end{equation*}
and so
\begin{equation*}
H_{\mu_{i}}(\mathcal{Q}_{i})\leq H_{\mu_{i}}(\mathscr{P}_{N_{i}})+\log3^{\mathtt{d}-1}\cdot\kappa(2N_{i}+1).
\end{equation*}

Therefore, in order to get a bound on $H_{\mu_{i}}(\mathscr{P}_{N_{i}})$
we are interested in bounding $H_{\mu_{i}}(\mathcal{Q}_{i})$. Clearly, 
\begin{equation*}
H_{\mu_{i}}(\mathcal{Q}_{i})\geq H_{\mu_{i}}(\mathcal{Q}_{i}|\{X_{i},X_{i}^{c}\})\geq\mu_{i}(X_{i})\cdot H_{(\mu_{i})_{X_{i}}}(\mathcal{Q}_{i}).
\end{equation*}
Using the fact that entropy is controlled by an $L^{2}$-norm, and
the already mentioned fact that for $A\in\mathcal{Q}_{i}$ either
$A\subset X_{i}$ or $A\cap X_{i}=\emptyset$, we obtain
\begin{equation*}
H_{(\mu_{i})_{X_{i}}}(\mathcal{Q}_{i})\geq-\log\underset{A\in\mathcal{Q}_{i}}{\sum}\big((\mu_{i})_{X_{i}}(A)\big)^{2}=-\log\underset{A\in\mathcal{Q}_{i},\:A\subset X_{i}}{\sum}\frac{\mu_{i}(A)^{2}}{\mu_{i}(X_{i})^{2}}.
\end{equation*}

Let us estimate this sum. Every $A\in\mathcal{Q}_{i}$ with $A\subset X_{i}$
is contained in a Bowen $(N_{i},\eta)$-ball. In addition, $\{A\in\mathcal{Q}_{i}:\:A\subset X_{i}\}$
is a mutually disjoint family and $$X_{i}\subset T_{a}^{-N_i}(\Omega_{\mathrm{nc}}^{\epsilon})\subset T_{a}^{-N_i}(\mathcal{F}\core_{H_{i}}(X)).$$
Therefore, by the assumptions of Theorem \ref{theorem1} (as explained in the
proof of Lemma~\ref{lemma_1}),
\begin{align*}
\underset{A\in\mathcal{Q}_{i},\:A\subset X_{i}}{\sum}\mu_{i}(A)^{2}&=\mu_{i}\times\mu_{i}\Big(\underset{A\in\mathcal{Q}_{i},\:A\subset X_{i}}{\bigcup}T_{a}^{N_i}(A)\times  T_{a}^{N_i}(A)\Big)\\&
\leq E(\epsilon_{0})\lambda_{i}^{\delta-\alpha\epsilon_{0}}\leq E(\epsilon_{0})e^{-2(\delta-\alpha\epsilon_{0})N_{i}}
\end{align*}
where $E(\epsilon_0)$ is the implicit constant in assumption \ref{assumption_2_thm_1} of Theorem \ref{theorem1}. Therefore,
\begin{equation*}
H_{(\mu_{i})_{X_{i}}}(\mathcal{Q}_{i})\geq-\log\frac{E(\epsilon_{0})e^{-2(\delta-\alpha\epsilon_{0})N_{i}}}{\mu_{i}(X_{i})^{2}}=2\log(\mu_{i}(X_{i}))-\log E(\epsilon_{0})+2(\delta-\alpha\epsilon_{0})N_{i}
\end{equation*}
and so,
\begin{equation*}
H_{\mu_{i}}(\mathcal{Q}_{i})\geq2\mu_{i}(X_{i})\log(\mu_{i}(X_{i}))-\mu_{i}(X_{i})\log E(\epsilon_{0})+2\mu_{i}(X_{i})(\delta-\alpha\epsilon_{0})N_{i}.
\end{equation*}
Note that the first two terms are bounded as $i$ approaches $\infty$. Yet, the third term approaches $\infty$ since
$$\underset{i\to\infty}{\liminf}(\mu_i(X_i))\geq\underset{i\to\infty}{\liminf}(1-2\kappa^{-1}\mu_i(\Omega_{c}^{\epsilon}))=1-2\kappa^{-1}\mu(\Omega_{c}^{\epsilon})\geq1-2\kappa>0$$
and of course $N_i\to\infty$. The first equality was due to $\mu$ being a probability measure by Lemma \ref{lemma_1}, and $\Omega_{\mathrm{c}}^{\epsilon}$ being a continuity set.

Therefore, for all large enough $i$,
\begin{align*}
H_{\mu_{i}}(\mathcal{Q}_{i})&\geq2\mu_{i}(X_{i})(\delta-\alpha\epsilon_{0})N_{i}-\alpha\epsilon_{0}\cdot\mu_{i}(X_{i})N_{i}=\mu_{i}(X_{i})(2\delta-3\alpha\epsilon_{0})N_{i}\\&\geq(1-2\kappa^{-1}\mu_{i}(\Omega_{\mathrm{c}}^{\epsilon}))\cdot(2\delta-3\alpha\epsilon_{0})\cdot N_{i}.
\end{align*}
Altogether, we get 
\begin{equation*}
(1-2\kappa^{-1}\mu_{i}(\Omega_{\mathrm{c}}^{\epsilon}))\cdot(2\delta-3\alpha\epsilon_{0})\cdot N_{i}\leq H_{\mu_{i}}(\mathcal{Q}_{i})\leq H_{\mu_{i}}(\mathscr{P}_{N_{i}})+\log3^{\mathtt{d}-1}\cdot\kappa(2N_{i}+1).
\end{equation*}

Fix some $N_{0}\in\mathbb{N}$. By subadditivity of entropy, and $\mu_{i}$'s
$T_{a}$-invariance, we get for all $i>N_{0}$ (set $k_{i}=\lceil\frac{N_{i}}{N_{0}}\rceil$): \begin{align*}
H_{\mu_{i}}(\mathscr{P}_{N_{i}})&\leq H_{\mu_{i}}(\mathscr{P}_{k_{i}N_{0}})=H_{\mu_{i}}(\underset{j=1}{\overset{k_{i}}{\bigvee}}\underset{n=(2(j-1)-k_{i})N_{0}}{\overset{(2j-k_{i})N_{0}}{\bigvee}}T_{a}^{-n}\mathscr{P})\\&
\leq\underset{j=1}{\overset{k_{i}}{\sum}}H_{\mu_{i}}(\underset{n=(2(j-1)-k_{i})N_{0}}{\overset{(2j-k_{i})N_{0}}{\bigvee}}T_{a}^{-n}\mathscr{P})\\&
=\underset{j=1}{\overset{k_{i}}{\sum}}H_{\mu_{i}}(T_{a}^{(2j-1-k_{i})N_{0}}\underset{n=(2(j-1)-k_{i})N_{0}}{\overset{(2j-k_{i})N_{0}}{\bigvee}}T_{a}^{-n}\mathscr{P})\\&
=\underset{j=1}{\overset{k_{i}}{\sum}}H_{\mu_{i}}(\underset{n=-N_{0}}{\overset{N_{0}}{\bigvee}}T_{a}^{-n}\mathscr{P})=k_{i}H_{\mu_{i}}(\mathscr{P}_{N_{0}})
\end{align*}
and so
\begin{equation*}
(1-2\kappa^{-1}\mu_{i}(\Omega_{\mathrm{c}}^{\epsilon}))\cdot(2\delta-3\alpha\epsilon_{0})\cdot N_{i}\leq\lceil\frac{N_{i}}{N_{0}}\rceil H_{\mu_{i}}(\mathscr{P}_{N_{0}})+\log3^{\mathtt{d}-1}\cdot\kappa(2N_{i}+1)
\end{equation*}
i.e.\ 
\begin{equation}\label{entropy_estimate}
H_{\mu_{i}}(\mathscr{P}_{N_{0}})\geq\Big((1-2\kappa^{-1}\mu_{i}(\Omega_{\mathrm{c}}^{\epsilon}))\cdot(2\delta-3\alpha\epsilon_{0})-2\log3^{\mathtt{d}-1}\cdot\kappa\Big)\cdot\frac{N_{i}}{\lceil\frac{N_{i}}{N_{0}}\rceil}-\log3^{\mathtt{d}-1}\cdot\kappa\cdot\frac{1}{\lceil\frac{N_{i}}{N_{0}}\rceil}.
\end{equation}

Note that $\underset{i\to\infty}{\lim}\mu_{i}(\Omega_{\mathrm{c}}^{\epsilon})=\mu(\Omega_{\mathrm{c}}^{\epsilon})$. 
Moreover, $\frac{N_{i}}{\lceil\frac{N_{i}}{N_{0}}\rceil}\to N_{0}$
and $\frac{1}{\lceil\frac{N_{i}}{N_{0}}\rceil}\to0$ as $i\to\infty$. Therefore, the RHS of equation \eqref{entropy_estimate} approaches
\begin{equation*}
\Big((1-2\kappa^{-1}\mu(\Omega_{\mathrm{c}}^{\epsilon}))\cdot(2\delta-3\alpha\epsilon_{0})-2\log3^{\mathtt{d}-1}\cdot\kappa\Big)\cdot N_{0}
\end{equation*}

As for the LHS of Equation \eqref{entropy_estimate}, we claim that
it converges to $H_{\mu}(\mathscr{P}_{N_{0}})$. Recall that per our choice $\mu(\partial A)=0$ for all $A\in\mathscr{P}$.
As $T_{a}$ is a measure-preserving homeomorphism, we get $\mu(\partial(T_{a}^{k}(A)))=0$
for all $A\in\mathscr{P}$ and $|k|\leq N_{0}$.
In general $\partial(\underset{i\in I}{\bigcap}A_{i})\subset\underset{i\in I}{\bigcup}\partial A_{i}$
for any finite set of indices $I$. As every set $A\in\mathscr{P}_{N_{0}}$
is of the form $A=\underset{i=-N_{0}}{\overset{N_{0}}{\bigcap}}T_{a}^{k}(A)$,
we obtain $\mu(\partial A)=0$ for all $A\in\mathscr{P}_{N_{0}}$.
Recall that by definition
\begin{equation*}
H_{\mu_{i}}(\mathscr{P}_{N_{0}})=-\underset{A\in\mathscr{P}_{N_{0}}}{\sum}\mu_{i}(A)\log(\mu_{i}(A)).
\end{equation*}
As it is just a finite sum of elements, each converging to $\mu(A)\log(\mu(A))$
(by the Portmanteau theorem), we obtain
\begin{equation*}
\underset{i\to\infty}{\lim}H_{\mu_{i}}(\mathscr{P}_{N_{0}})=H_{\mu}(\mathscr{P}_{N_{0}})
\end{equation*}
as desired.

Therefore, by taking the limit of equation \eqref{entropy_estimate}
as $i\to\infty$ we get
\begin{equation*}
H_{\mu}(\mathscr{P}_{N_{0}})\geq\Big((1-2\kappa^{-1}\mu(\Omega_{\mathrm{c}}^{\epsilon}))\cdot(2\delta-3\alpha\epsilon_{0})-2\log3^{\mathtt{d}-1}\cdot\kappa\Big)N_{0}.
\end{equation*}
Therefore
\begin{align*}
h_{\mu}(T)&=\underset{\mathscr{Q}}{\sup}\underset{n\to\infty}{\lim}\frac{\underset{i=-n}{\overset{n}{\bigvee}}T^{-n}\mathscr{Q}}{2n+1}\geq\underset{N_{0}\to\infty}{\lim}\frac{H_{\mu}(\mathscr{P}_{N_{0}})}{2N_{0}+1}\\&\geq\underset{N_{0}\to\infty}{\lim}\Big((1-2\mu(\Omega_{\mathrm{c}}^{\epsilon})^{1/2})\cdot(2\delta-3\alpha\epsilon_{0})-2\log3^{\mathtt{d}-1}\cdot\mu(\Omega_{\mathrm{c}}^{\epsilon})^{1/2}\Big)\cdot\frac{N_{0}}{2N_{0}+1}\\&=\Big((1-2\mu(\Omega_{\mathrm{c}}^{\epsilon})^{1/2})\cdot(2\delta-3\alpha\epsilon_{0})-2\log3^{\mathtt{d}-1}\cdot\mu(\Omega_{\mathrm{c}}^{\epsilon})^{1/2}\Big)\cdot\frac{1}{2}
\end{align*}
As $\underset{\epsilon\to0^{+}}{\lim}\mu(\Omega_{\mathrm{c}}^{\epsilon})=0$,
and as the calculation holds for all small enough $\epsilon,\epsilon_{0}>0$ (apart for countably many $\epsilon$),
we can take the limit as $\epsilon,\epsilon_{0}\to0^{+}$ and get
$h_{\mu}(T)\geq\delta$.\end{proof}

To deduce item \ref{thm1 item3} of Theorem \ref{theorem1} and conclude the proof, it is enough to show that if $\Gamma$ is Zariski-dense in $G$ then $m_{\BM}^{\mathcal{F}}$ is the unique measure of entropy $\delta(\Gamma)$, which is the maximal entropy. For the geodesic flow, it follows from Theorem~\ref{maximal_entropy} even without assuming $\Gamma$ is Zariski-dense. In particular, Theorem \ref{theorem1} can be restated in the geodesic flow.
For the frame flow, the characterization of the Bowen-Margulis measure as the unique measure of maximal entropy essentially reduces to showing that $m_{\BM}^{\mathcal{F}}$ is ergodic, a result that was established by Winter in \cite{winter2015mixing}. This reduction is well-known, for completeness we provide a proof below. 

\begin{proposition}
Assume that $\Gamma<G$ is Zariski-dense and geometrically finite. Then $m_{\BM}^{\mathcal{F}}$ is the unique measure of maximal entropy $\delta$. 
\end{proposition}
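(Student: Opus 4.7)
The strategy is to reduce the uniqueness statement on $\Gamma\backslash G$ to the corresponding statement on $\Gamma\backslash G/M$, where Theorem~\ref{maximal_entropy}(2) already identifies $m_{\BM}$ as the unique measure of maximal entropy for the geodesic flow, and then to upgrade this identification to the frame bundle using the ergodicity of $m_{\BM}^{\mathcal{F}}$ established by Winter \cite{winter2015mixing}.

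Suppose $\mu$ is an $A$-invariant probability measure on $\Gamma\backslash G$ with $h_\mu(T_a) = \delta$. Since $M$ centralizes $A$, right translation by $m\in M$ commutes with the frame flow, and hence $m_*\mu$ is an $A$-invariant measure with $h_{m_*\mu}(T_a) = \delta$. Form the $M$-average
$$\mu^M \coloneqq \int_M m_*\mu\,dm,$$
which is both $A$- and $M$-invariant; by affinity of the entropy on the simplex of $A$-invariant probability measures, $h_{\mu^M}(T_a) = \delta$. Let $\bar\mu \coloneqq (\pi_M)_*\mu^M$ be the pushforward to $\Gamma\backslash G/M$. Since $\mu^M$ is $M$-invariant, it disintegrates over $\bar\mu$ as Haar measure along the compact $M$-fibers, so $\pi_M$ realizes $\mu^M$ as an isometric compact group extension of $\bar\mu$; a standard argument (Abramov--Rokhlin for compact group extensions) then gives $h_{\mu^M}(T_a) = h_{\bar\mu}(T_a) = \delta$. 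By Theorem~\ref{maximal_entropy}(2), using that $\Gamma$ geometrically finite ensures $m_{\BM}$ is a finite measure, we deduce $\bar\mu = m_{\BM}$. The unique $M$-invariant lift of $m_{\BM}$ to $\Gamma\backslash G$ is precisely $m_{\BM}^{\mathcal{F}}$, so $\mu^M = m_{\BM}^{\mathcal{F}}$.

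It remains to remove the $M$-averaging. Winter's theorem \cite{winter2015mixing}, which uses the Zariski-density hypothesis on $\Gamma$, asserts that $m_{\BM}^{\mathcal{F}}$ is mixing, and in particular $A$-ergodic; it is therefore an extreme point of the convex set of $A$-invariant probability measures on $\Gamma\backslash G$. The identity
$$m_{\BM}^{\mathcal{F}} = \int_M m_*\mu\,dm$$
writes $m_{\BM}^{\mathcal{F}}$ as a weak-$\star$ barycenter of $A$-invariant measures, so extremality (via Choquet's theorem, or directly) forces $m_*\mu = m_{\BM}^{\mathcal{F}}$ for Haar-a.e.\ $m\in M$. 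Combined with the $M$-invariance of $m_{\BM}^{\mathcal{F}}$, this yields $\mu = m_{\BM}^{\mathcal{F}}$. The main technical point is the verification that the compact group extension $\pi_M$ preserves entropy on $M$-invariant measures; the rest is essentially formal once Theorem~\ref{maximal_entropy} and Winter's ergodicity theorem are in hand.
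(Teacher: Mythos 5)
Your proof is correct and follows essentially the same route as the paper's: pass to the quotient $\Gamma\backslash G/M$ to use the known uniqueness of the measure of maximal entropy there, relate $\mu$ to its $M$-average (which must be the Haar lift $m_{\BM}^{\mathcal{F}}$), and finish by invoking Winter's ergodicity of $m_{\BM}^{\mathcal{F}}$ to trivialize the barycenter decomposition $\int_M m_*\mu\,dm$. The only cosmetic difference is that you average over $M$ \emph{before} pushing forward and track entropy through that average via affinity, whereas the paper pushes $\mu$ forward first (using that a compact isometric extension preserves entropy for any $\mu$, not just $M$-invariant ones) and averages afterward; both orderings rest on the same three ingredients.
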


\begin{proof}
Let $\mu$ be an $A$-invariant probability measure on $\Gamma\backslash G$.
First, note that $h_{\mu}(a_1)=h_{(\pi_M)_{\ast}\mu}(a_1)$ where $(\pi_M)_{\ast}\mu$ is the pushforward of $\mu$ to $\Gamma\backslash G/M$, since compact (isometric) extensions do not increase entropy.
In particular, the maximal entropy of the frame flow is $\delta$, which is realized by $m_{\BM}^{\mathcal{F}}$.

Next, assume that $\mu$ is of maximal entropy. Then its pushforward measure is of entropy $\delta$. Uniqueness of measure of entropy $\delta$ on $\Gamma\backslash G/M$ is known in our setup (Theorem \ref{maximal_entropy}), and so $\mu$ is some lift of $m_{\BM}$.

For all $m\in M$ define a measure $\mu_{m}$ on $\Gamma\backslash G$ by $\mu_{m}(Y)=\mu(Ym)$. These measures are all $A$-invariant probability measures. It is clear that by averaging these measures with respect to the Haar measure $\lambda$ on $M$, we obtain a lift of $m_{\BM}$ which is $M$-invariant from the right. This must be the lift using the Haar measure $\lambda$, by uniqueness of the Haar measure. In other words, this lift is the Bowen-Margulis measure on $\Gamma\backslash G$. So we obtained
\begin{equation}\label{decomposition_of_m_BM}
    m_{\BM}^{\mathcal{F}}=\underset{M}{\int}\mu_{m}d\lambda(m).
\end{equation}

By \cite{winter2015mixing}, $m_{\BM}^{\mathcal{F}}$ is mixing and in particular ergodic. Therefore, it is an extreme point in the space of invariant probability measures, and in particular the decomposition \eqref{decomposition_of_m_BM} is trivial and so $\mu=m_{\BM}^{\mathcal{F}}$.
\end{proof}

\section{Proof of Theorem \ref{theorem2}}
The following lemma is very useful for estimating entropy on hyperbolic
orbifolds. It goes back to Katok, who showed in \cite{katok1980lyapunov} that equality holds if the space is compact.
\begin{lemma}\label{entropy_bound_by_Bowen_balls}Let $\mu$ be an $A$-invariant
probability measure on $\mathcal{F}X=\Gamma\backslash G$. For any given $N\geq1$ and
$\epsilon_{0}>0$, let $\BC_{\rho}(N,\epsilon_{0})$ be the minimal
number of Bowen $(N,\rho)$-balls needed to cover any subset of $\mathcal{F}X$
of measure larger than $1-\epsilon_{0}$.

Then
\begin{equation*}
h_{\mu}(T_a)\leq\underset{\epsilon_{0}\to0^{+}}{\liminf}\underset{N\to\infty}{\liminf}\frac{\log(\BC_{\rho}(N,\epsilon_{0}))}{2N}
\end{equation*}
\end{lemma}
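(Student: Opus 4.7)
The lemma is a variant of Katok's entropy formula adapted to the non-compact space $\mathcal{F}X$. My plan is to prove, for every finite measurable partition $\mathscr{P}$ of $\mathcal{F}X$ whose elements have $d_{\mathcal{F}X}$-diameter at most $\rho/3$, the bound
\[
h_\mu(T_a,\mathscr{P})\leq \liminf_{\epsilon_0\to 0^+}\liminf_{N\to\infty}\frac{\log \BC_\rho(N,\epsilon_0)}{2N}.
\]
A standard approximation argument then gives the lemma upon taking the supremum over such $\mathscr{P}$, since finite partitions with arbitrarily small diameter are dense (in the Rokhlin metric) in the set of all finite-entropy measurable partitions on the Polish probability space $(\mathcal{F}X,\mu)$.

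Fix such a $\mathscr{P}$ and $\epsilon_0>0$. For each $N$, let $k_N=\BC_\rho(N,\epsilon_0)$ and choose Bowen $(N,\rho)$-balls $B_1,\ldots,B_{k_N}$ whose union $Y_N$ satisfies $\mu(Y_N)>1-\epsilon_0$. Form the disjointified partition $\mathscr{Q}_N=\{B_j\setminus\bigcup_{i<j}B_i\}_{j=1}^{k_N}\cup\{\mathcal{F}X\setminus Y_N\}$, of cardinality at most $k_N+1$. The standard subadditivity
\[
H_\mu(\mathscr{P}_N)\leq H_\mu(\mathscr{Q}_N)+H_\mu(\mathscr{P}_N\mid\mathscr{Q}_N)\leq \log(k_N+1)+H_\mu(\mathscr{P}_N\mid\mathscr{Q}_N)
\]
reduces the problem to estimating the conditional entropy. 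The atom $\mathcal{F}X\setminus Y_N$ has measure at most $\epsilon_0$ and contributes at most $\epsilon_0(2N+1)\log|\mathscr{P}|$. The main task is to control the contribution from atoms contained in a Bowen ball $B_j$.

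I claim that each Bowen $(N,\rho)$-ball $B_j$ meets at most $C_0$ atoms of $\mathscr{P}_N$, with $C_0$ depending only on $\rho$ and $G$ (not on $N$). Indeed, each atom $P\in\mathscr{P}_N$ has $d_N$-diameter at most $\rho/3$, since for $y,z\in P$ and $|n|\leq N$, $T_a^n y$ and $T_a^n z$ lie in a common element of $\mathscr{P}$ of $d_{\mathcal{F}X}$-diameter at most $\rho/3$. By the product-structure description in Remark~\ref{bowen_ball_remark}, the Haar volume of any Bowen $(N,r)$-ball is comparable to $r^{\dim G}$ multiplied by an $N$-dependent factor that is identical across radii. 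A Vitali/packing argument in the metric $d_N$ then bounds by a uniform constant $C_0$ the number of disjoint $d_N$-balls of radius $\rho/3$ that can fit inside a $d_N$-ball of radius $\rho$, yielding a contribution of at most $\log C_0$ to the conditional entropy.

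Combining the bounds and dividing by $2N+1$:
\[
\frac{H_\mu(\mathscr{P}_N)}{2N+1}\leq \frac{\log(k_N+1)+\log C_0}{2N+1}+\epsilon_0\log|\mathscr{P}|.
\]
Letting $N\to\infty$, then $\epsilon_0\to 0^+$, and finally taking the supremum over $\mathscr{P}$ concludes the proof (using $\liminf_N \frac{\log k_N}{2N+1}=\liminf_N \frac{\log k_N}{2N}$). The main obstacle is the $N$-uniform packing bound $C_0$: without the observation that Bowen $(N,r)$-balls of varying $r$ have Haar volumes that differ by the same $N$-independent factor, one is left with a packing count growing exponentially in $N$, which would survive the division by $2N+1$ and destroy the bound.
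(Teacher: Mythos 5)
There is a genuine gap in the key claim that each Bowen $(N,\rho)$-ball meets at most a uniform number $C_0$ of atoms of $\mathscr{P}_N$. You argue that the atoms have $d_N$-diameter at most $\rho/3$ and then invoke a packing bound, but a packing/Vitali argument bounds the number of \emph{disjoint balls} of a given radius inside a larger ball, not the number of disjoint \emph{sets of small diameter}. Atoms of $\mathscr{P}_N$ need not contain any $d_N$-ball: near the boundaries of $\mathscr{P}$, atoms of the $N$-fold refinement become arbitrarily thin in the stable/unstable directions, so a single Bowen $(N,\rho)$-ball can straddle exponentially many of them. Concretely, if the forward orbit of a Bowen ball's center stays $\rho$-close to $\bigcup_i \partial S_i$ for many of the $2N+1$ time steps, the number of $\mathscr{P}_N$-itineraries realized inside that Bowen ball can grow like $|\mathscr{P}|^{cN}$, and this destroys your conditional-entropy estimate. (There is a secondary issue: on the non-compact space $\mathcal{F}X$ a finite partition cannot have all atoms of diameter $\leq\rho/3$; one atom must be unbounded, and any atom of $\mathscr{P}_N$ whose itinerary hits it fails your diameter bound.)

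The paper's proof circumvents exactly this obstruction by going in the opposite direction: rather than bounding how many atoms a Bowen ball meets, it chooses $\mathscr{P}$ to have thin boundaries (condition $\mu((\partial S_i)B_\kappa^G)<E\kappa$), shrinks the Bowen radius polynomially ($\rho_N=\rho N^{-2}$), and shows that the set $E_N$ of points $x$ whose Bowen ball $xB_{N,2\rho_N}$ lies \emph{entirely inside} the single atom $[x]_{\mathscr{P}_N}$ has measure $1-O(N^{-1})$. Since $\BC_\rho$ and $\BC_{\rho_N}$ differ only by a factor polynomial in $N$ (Remark~\ref{remark_about_BC}), this costs nothing after dividing by $2N$. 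On $Y_i=X_i\cap E_{N_i}$, the Bowen cover from the definition of $\BC$ then directly produces a cover by at most $\lfloor e^{2N_i f}\rfloor$ atoms, and the final conditional-entropy estimate is against the two-element partition $\{Z_i, Z_i^c\}$, which sidesteps the need for any packing bound at all. If you want to pursue your route, you would need to replace the packing claim by a measure-theoretic estimate on the exceptional set of Bowen balls that straddle many atoms — which is essentially the thin-boundary argument the paper uses.
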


\begin{remark}\label{remark_about_BC}By Remark \ref{bowen_ball_remark}, each Bowen $(N,\rho^{\prime})$-ball can be covered by $$\ll \lceil\frac{\rho^{\prime}}{\rho}\rceil^{\frac{(\mathtt{d}+1)\mathtt{d}}{2}}\ll_{\rho,\rho^{\prime}} 1$$ many Bowen $(N,\rho)$-balls.
Therefore
\begin{equation*}
\underset{N\to\infty}{\liminf}\frac{\log(\BC_{\rho}(N,\epsilon_{0}))}{2N}=\underset{N\to\infty}{\liminf}\frac{\log(\BC_{\rho^{\prime}}(N,\epsilon_{0}))}{2N},
\end{equation*}
and the limit does not depend on $\rho$. 
\end{remark}

\begin{proof}
Let $\upsilon>0$. Let $\mathscr{P}=\{Q,S_{1},\ldots,S_{l}\}$ be a finite measurable partition
satisfying the following conditions:
\begin{enumerate}
\item $h_{\mu}(T_{a},\mathscr{P})>h_{\mu}(T_{a})-\upsilon$
\item $\mu((\partial S_{i})B_{\kappa}^{G})<E\kappa$ for all $1\leq i\leq l$,
for some constant $E>0$ and for all $\kappa>0$ small enough.
\end{enumerate}
The first condition can be met by the definition of the measure-theoretic
entropy. The second condition is possible due to Lebesgue's theorem, as for any $x\in\Gamma\backslash G$ the function
$\phi(r)=\mu(B_{r}(x))$ is monotone and so a.e.\ differentiable.

Let $\mathscr{P}_{N}=\underset{i=-N}{\overset{N}{\bigvee}}T_{a}^{-i}\mathscr{P}$. For $x\in \mathcal{F}X$, let $[x]_{\mathscr{P}_N}$ stand for the unique atom of $\mathscr{P}_N$ containing $x$. Set $\rho_{N}=\rho N^{-2}$. We would like to show that $\mu(E_{N})>1-DN^{-1}$ for a constant
$D>0$ and for all $N\in\mathbb{N}$ large enough, where
\begin{equation*}
E_{N}=\{x\in \mathcal{F}X:\:xB_{N,2\rho_{N}}\subset[x]_{\mathscr{P}_{N}}\}.
\end{equation*}
That is, for most points $x\in \mathcal{F}X$, the atom in $\mathscr{P}_{N}$
containing $x$ contains a Bowen ball about $x$ as well.

Indeed, if $x\in E_{N}^{c}$ then there is $h\in B_{N,2\rho_{N}}$
such that $xh\not\in[x]_{\mathscr{P}_{N}}$. In particular, it follows
that there are $|n|\leq N$ and $P_{1}\not=P_{2}\in\mathscr{P}$ such
that $x\in T_{a}^{n}P_{1}$ and $xh\in T_{a}^{n}P_{2}$. As $h\in B_{N,2\rho_{N}}$,
it can be written by $h=a^{-n}\tilde{h}a^{n}$ for $\tilde{h}\in B_{2\rho_{N}}^{G}$. We obtain that $xa^{-n}$ and $xha^{-n}=xa^{-n}\tilde{h}$ belong
to different sets in $\mathscr{P}$, and so $xa^{-n}$ must be $B_{2\rho_{N}}^{G}$-close
to the boundary of $S_j$ for some $1\leq j\leq l$, that
is $xa^{-n}\in(\partial S_{j})B_{2\rho_{N}}^{G}$. Therefore, we obtained
\begin{equation*}
E_{N}^{c}\subset\underset{n=-N}{\overset{N}{\bigcup}}\underset{j=1}{\overset{l}{\bigcup}}T_{a}^{n}\big((\partial S_j)B_{2\rho_{N}}^{G}\big).
\end{equation*}
Since
\begin{equation*}
\mu\Big(\underset{n=-N}{\overset{n}{\bigcup}}\underset{j=1}{\overset{l}{\bigcup}}T_{a}^{n}((\partial S_j)B_{2\rho_{N}}^{G})\Big)\leq(2N+1)\cdot l\cdot2E\rho_{N}
\end{equation*}
the estimate on $\mu(E_N)$ follows.\\

We note that
\begin{equation*}
\underset{N\to\infty}{\liminf}\frac{\log(\BC_{\rho}(N,\epsilon_{0}))}{2N}=\underset{N\to\infty}{\liminf}\frac{\log(\BC_{\rho_{N}}(N,\epsilon_{0}))}{2N},
\end{equation*}
because as in Remark~\ref{remark_about_BC}
\begin{equation*}
\BC_{\rho}(N,\epsilon_{0})\leq\BC_{\rho_{N}}(N,\epsilon_{0})\ll N^{(\mathtt{d}+1)\mathtt{d}}\BC_{\rho}(N,\epsilon_{0}).
\end{equation*}
Take some
\begin{equation*}
f>\underset{\epsilon_{0}\to0^{+}}{\liminf}\underset{N\to\infty}{\liminf}\frac{\log(\BC_{\rho}(N,\epsilon_{0}))}{2N}.
\end{equation*}
Then there is a sequence $\epsilon_{i}\to0^{+}$ and a sequence $N_{i}\to\infty$
(dependent on $\epsilon_{i}$) such that $f>\frac{\log(\BC_{\rho_{N_{i}}}(N_{i},\epsilon_{i}))}{2N_{i}}$
and $N_{i}>\frac{D}{\epsilon_{i}}$. Then there is a subset $X_{i}\subset \mathcal{F}X$ with $\mu(X_{i})>1-\epsilon_{i}$
which can be covered by $\lfloor e^{2N_{i}f}\rfloor$ Bowen $(N_{i},\rho_{N_{i}})$
balls $$\{y_{j}^{i}B_{N_{i},\rho_{N_{i}}}\}_{1\leq j\leq\lfloor e^{2N_{i}f}\rfloor}.$$
Moreover, $\mu(E_{N_{i}})\geq1-\epsilon_{i}$.

Set $Y_{i}=X_{i}\cap E_{N_{i}}$, which clearly satisfies $\mu(Y_{i})\geq1-2\epsilon_{i}$. Take $x\in Y_{i}$. As $x\in X_{i}$, there is $1\leq j\leq\lfloor e^{2N_{i}f}\rfloor$
such that $x\in y_{j}^{i}B_{N_{i},\rho_{N_{i}}}$ and so
\begin{equation*}
y_{j}^{i}B_{N_{i},\rho_{N_{i}}}\subset xB_{N_{i},2\rho_{N_{i}}}.
\end{equation*}
As $x\in E_{N_{i}}$ we obtain
\begin{equation*}
y_{j}^{i}B_{N_{i},\rho_{N_{i}}}\subset xB_{N_{i},2\rho_{N_{i}}}\subset[x]_{\mathscr{P}_{N_{i}}}=[y_{j}^{i}]_{\mathscr{P}_{N_{i}}}.
\end{equation*}
Therefore, $Y_{i}$ can be covered by $\lfloor e^{2N_{i}f}\rfloor$
atoms of $\mathscr{P}_{N_{i}}$. Let $Z_{i}$ be the union of those
$\lfloor e^{2N_{i}f}\rfloor$ atoms. In particular, $\mu_{Z_{i}}(A)\log\mu_{Z_{i}}(A)\not=0$ only for
$\lfloor e^{2N_{i}f}\rfloor$ sets $A\in\mathscr{P}_{N_{i}}$, and
so $H_{\mu_{Z_{i}}}(\mathscr{P}_{N_{i}})\leq2N_{i}f$. 

Set $\mathscr{Q}=\{Z_{i},Z_{i}^{c}\}$. As $\mathscr{P}_{N_{i}}$ is finer than $\mathscr{Q}$,
\begin{align*}
H_{\mu}(\mathscr{P}_{N_{i}})&=H_{\mu}(\mathscr{Q})+H_{\mu}(\mathscr{P}_{N_{i}}|\mathscr{Q})=H_{\mu}(\mathscr{Q})+\mu(Z_{i})H_{\mu_{Z_{i}}}(\mathscr{P}_{N_{i}})+\mu(Z_{i}^{c})H_{\mu_{Z_{i}^{c}}}(\mathscr{P}_{N_{i}})\\&\leq\log2+1\cdot2N_{i}f+2\epsilon_{i}\cdot\log \big((l+1)^{2N_{i}+1}\big)\\&
=\log2+2N_{i}f+2\epsilon_{i}\log (l+1)\cdot (2N_{i}+1).
\end{align*}

We get
\begin{equation*}
h_{\mu}(T_{a})-\upsilon<h_{\mu}(T_{a},\mathscr{P})=\underset{N\to\infty}{\lim}\frac{H_{\mu}(\mathscr{P}_{N})}{2N+1}=\underset{i\to\infty}{\lim}\frac{H_{\mu}(\mathscr{P}_{N_{i}})}{2N_{i}+1}\leq f.
\end{equation*}

As $\upsilon>0$ and
\begin{equation*}
f>\underset{\epsilon_{0}\to0^{+}}{\liminf}\underset{N\to\infty}{\liminf}\frac{\log(\BC_{\rho}(N,\epsilon_{0}))}{2N}
\end{equation*}
are arbitrary, we obtain
\begin{equation*}
h_{\mu}(T_{a})\leq\underset{\epsilon_{0}\to0^{+}}{\liminf}\underset{N\to\infty}{\liminf}\frac{\log(\BC_{\rho}(N,\epsilon_{0}))}{2N}
\end{equation*}
as desired. \end{proof}

\begin{proof}[Proof of Theorem \ref{theorem2}]
First of all, let us assume $\mu$ is ergodic.

Take $0<\epsilon^{\prime}<\epsilon_{\mathtt{d}}$ small enough so
that every bi-infinite $T_{a}$-orbit of any point in $\Omega_{\mathcal{F}}$ meets
$\Omega_{\mathrm{nc}}^{\epsilon^{\prime}}$, that is $\Omega_{\mathcal{F}}\subset\underset{k=-\infty}{\overset{\infty}{\bigcup}}T_{a}^{-k}(\Omega_{\mathrm{nc}}^{\epsilon^{\prime}})$. Take $0<\epsilon<\epsilon^{\prime}$ small enough so that $T_{a}^{-1}(\Omega_{\mathrm{nc}}^{\epsilon^{\prime}})\subset\Omega_{\mathrm{nc}}^{\epsilon}$. Moreover, assume $\epsilon$ is small enough for Lemma \ref{main_lemma} to hold.

Note that $\mu(\Omega_{\mathrm{nc}}^{\epsilon^{\prime}})>0$, because
$$1=\mu(\Omega_{\mathcal{F}})\leq\mu(\underset{k=-\infty}{\overset{\infty}{\bigcup}}T_{a}^{-k}(\Omega_{\mathrm{nc}}^{\epsilon^{\prime}}))\leq\underset{k=-\infty}{\overset{\infty}{\sum}}\mu(T_{a}^{-k}(\Omega_{\mathrm{nc}}^{\epsilon^{\prime}}))=\underset{k=-\infty}{\overset{\infty}{\sum}}\mu(\Omega_{\mathrm{nc}}^{\epsilon^{\prime}}).$$

Note that from Birkhoff's Pointwise Ergodic Theorem, almost every $x\in \mathcal{F}X$
satisfies
\begin{equation*}
\underset{N\to\infty}{\lim}\frac{1}{N}\overset{N-1}{\underset{k=0}{\sum}}1_{\Omega_{\mathrm{nc}}^{\epsilon^{\prime}}}(T_{a}^{k}x)=\mu(\Omega_{\mathrm{nc}}^{\epsilon^{\prime}})>0
\end{equation*}
and in particular $x\in\underset{k=0}{\overset{\infty}{\bigcup}}T_{a}^{-k}(\Omega_{\mathrm{nc}}^{\epsilon^{\prime}})
$, as otherwise this limit would be zero. So $\mu(\underset{k=0}{\overset{\infty}{\bigcup}}T_{a}^{-k}(\Omega_{\mathrm{nc}}^{\epsilon^{\prime}}))=1$.
Therefore, for any $\epsilon_0>0$ we can fix $K\geq0$ such that the set
\begin{equation*}
Y=\underset{k=0}{\overset{K}{\bigcup}}T_{a}^{-k}(\Omega_{\mathrm{nc}}^{\epsilon^{\prime}})
\end{equation*}
satisfies $\mu(Y)>1-\frac{\epsilon_{0}}{3}$.

Moreover, again by Birkhoff's Pointwise Ergodic Theorem, for all $1\leq i\leq\mathtt{d}-1$
\begin{equation*}
\underset{N\to\infty}{\lim}\frac{1}{2N+1}\overset{N}{\underset{n=-N}{\sum}}1_{\Omega_{\mathrm{c},i}^{\epsilon}}(T_{a}^{n}x)=\mu(\Omega_{\mathrm{c},i}^{\epsilon})
\end{equation*}
for almost every $x\in X$, i.e\ the sequence of functions
\begin{equation*}
\frac{1}{2N+1}\overset{N}{\underset{n=-N}{\sum}}1_{T_{a}^{-n}(\Omega_{\mathrm{c},i}^{\epsilon})}
\end{equation*}
converges to $\mu(\Omega_{\mathrm{c},i}^{\epsilon})$ almost-everywhere.
By Egorov's theorem, for all $1\leq i\leq\mathtt{d}-1$ there is a
subset $X_{i}$ with
\begin{equation*}
\mu(X_{i})>1-\frac{\epsilon_{0}}{3(\mathtt{d}-1)}
\end{equation*}
on which
\begin{equation*}
\frac{1}{2N+1}\overset{N}{\underset{n=-N}{\sum}}1_{T_{a}^{-n}(\Omega_{\mathrm{c},i}^{\epsilon})}
\end{equation*}
converges to $\mu(\Omega_{\mathrm{c},i}^{\epsilon})$ uniformly, and
so there is $N_{i}\in\mathbb{N}$ such that for all $N>N_{i}$ and $x\in X_{i}$ 
\begin{equation*}
\frac{1}{2N+1}\overset{N}{\underset{n=-N}{\sum}}1_{\Omega_{\mathrm{c},i}^{\epsilon}}(T_{a}^{n}x)>\kappa_{i}\coloneqq\mu(\Omega_{\mathrm{c},i}^{\epsilon})-\epsilon_{0}
\end{equation*}
holds. 

Set $N_{0}=\underset{1\leq i\leq\mathtt{d}-1}{\max}N_{i}$ and fix
$N>N_{0}$. Set $E=\underset{i=1}{\overset{\mathtt{d}-1}{\bigcap}}X_{i}$, and
\begin{equation*}
F=E\cap T_{a}^{N+K}Y\cap T_{a}^{-(N+K)}Y.
\end{equation*}
Note that
\begin{equation*}
F=\underset{(k_{1},k_{2})\in\{0,\ldots,K\}^{2}}{\bigcup}E\cap T_{a}^{N+K-k_{1}}(\Omega_{\mathrm{nc}}^{\epsilon^{\prime}})\cap T_{a}^{-(N+K)-k_{2}}(\Omega_{\mathrm{nc}}^{\epsilon^{\prime}}).
\end{equation*}
As in the proof of Lemma \ref{lemma_1}, we can re-shift each of the sets
\begin{equation*}
F_{k_{1},k_{2}}=E\cap T_{a}^{N+K-k_{1}}(\Omega_{\mathrm{nc}}^{\epsilon^{\prime}})\cap T_{a}^{-(N+K)-k_{2}}(\Omega_{\mathrm{nc}}^{\epsilon^{\prime}})
\end{equation*}
and use Lemma \ref{number of Z} and Lemma \ref{main_lemma} to cover it by Bowen balls, as follows. For any $(k_{1},k_{2})$, set $c=\lfloor\frac{k_{1}+k_{2}}{2}\rfloor,\:d=\lfloor\frac{k_{2}-k_{1}}{2}\rfloor$ and $N^{\prime}=N+K+d$. Then
\begin{equation*}
T_{a}^{c}F_{k_{1},k_{2}}\subset T_{a}^{N^{\prime}}(\Omega_{\mathrm{nc}}^{\epsilon^{\prime}})\cap T_{a}^{-N^{\prime}}(\Omega_{\mathrm{nc}}^{\epsilon^{\prime}})
\end{equation*}
or
\begin{equation*}
T_{a}^{c}F_{k_{1},k_{2}}\subset T_{a}^{N^{\prime}}(\Omega_{\mathrm{nc}}^{\epsilon^{\prime}})\cap T_{a}^{-(N^{\prime}+1)}(\Omega_{\mathrm{nc}}^{\epsilon^{\prime}}),
\end{equation*}
depending on the parity of $k_{1},k_{2}$, but in any case
\begin{equation*}
T_{a}^{c}F_{k_{1},k_{2}}\subset T_{a}^{N^{\prime}}(\Omega_{\mathrm{nc}}^{\epsilon})\cap T_{a}^{-N^{\prime}}(\Omega_{\mathrm{nc}}^{\epsilon}).
\end{equation*}

Moreover, for all $x=T_{a}^{c}y\in T_{a}^{c}F_{k_{1},k_{2}}$,
\begin{align*}
\underset{n=-N^{\prime}}{\overset{N^{\prime}}{\sum}}1_{\Omega_{\mathrm{c},i}^{\epsilon}}(T_{a}^{n}x)&=\underset{n=-N^{\prime}}{\overset{N^{\prime}}{\sum}}1_{\Omega_{\mathrm{c},i}^{\epsilon}}(T_{a}^{n+c}y)=\underset{n=-(N+K)+(c-d)}{\overset{(N+K)+(c+d)}{\sum}}1_{\Omega_{\mathrm{c},i}^{\epsilon}}(T_{a}^{n}y)\\&\geq(2N+1)\frac{1}{2N+1}(\underset{n=-N}{\overset{N}{\sum}}1_{\Omega_{\mathrm{c},i}^{\epsilon}}(T_{a}^{n}y)-|c-d|-|c+d|)\\&\geq(2N+1)(\kappa_{i}-\frac{2K}{2N+1}).
\end{align*}

We have obtained
\begin{align*}
T_{a}^{c}F_{k_{1},k_{2}}\subset\Big\{&x\in T_{a}^{N^{\prime}}(\Omega_{\mathrm{nc}}^{\epsilon})\cap T_{a}^{-N^{\prime}}(\Omega_{\mathrm{nc}}^{\epsilon}):\\&\:\forall1\leq i\leq\mathtt{d}-1,\:\left|\{n\in[-N^{\prime},N^{\prime}]\cap\mathbb{Z}:\:T_{a}^{n}x\in \Omega_{\mathrm{c},i}^{\epsilon}\}\right|\geq(2N+1)\kappa_{i}-2K\Big\}.
\end{align*}

By Lemma \ref{number of Z}, $T_{a}^{c}F_{k_{1},k_{2}}$ is the union of at most $\ll_{\epsilon}e^{\frac{3\log(|\log\epsilon|)}{|\log\epsilon|}N^{\prime}}$
sets of the form $T_{a}^{c}F_{k_{1},k_{2}}\cap Z(V,\epsilon)$. For $T_{a}^{c}F_{k_{1},k_{2}}\cap Z(V,\epsilon)$ to be non-empty, necessarily
\begin{equation*}
|V^{-1}(i)|\geq(2N+1)\kappa_{i}-2K
\end{equation*}
for all $1\leq i\leq\mathtt{d}-1$. Therefore, by Lemma \ref{main_lemma}, $T_{a}^{c}F_{k_{1},k_{2}}\cap Z(V,\epsilon)$
can be covered by
\begin{equation*}
\ll_{\epsilon} C^{\frac{4N^{\prime}}{|\log\epsilon|}}\cdot e^{(2N^{\prime}+1)\delta-\underset{i=1}{\overset{\mathtt{d}-1}{\sum}}\frac{2\delta-i}{2}\cdot((2N+1)\kappa_{i}-2K)}
\end{equation*}
Bowen $(N^{\prime},\eta)$-balls.
Therefore, $T_{a}^{c}F_{k_1,k_2}$ can be covered by
\begin{align*}
&\ll_{\epsilon}e^{\frac{3\log(|\log\epsilon|)}{|\log\epsilon|}N^{\prime}}\cdot C^{\frac{4N^{\prime}}{|\log\epsilon|}}\cdot e^{(2N^{\prime}+1)\delta-\underset{i=1}{\overset{\mathtt{d}-1}{\sum}}\frac{2\delta-i}{2}\cdot((2N+1)\kappa_{i}-2K)}\\
&\quad\leq e^{\frac{4\log(|\log\epsilon|)}{|\log\epsilon|}N^{\prime}}\cdot e^{(2N^{\prime}+1)\delta-\underset{i=1}{\overset{\mathtt{d}-1}{\sum}}\frac{2\delta-i}{2}\cdot((2N+1)\kappa_{i}-2K)}
\end{align*}
Bowen $(N^{\prime},\eta)$-balls.
Therefore $F_{k_{1},k_{2}}$ can be covered by that many $T_{a}^{-c}$
images of Bowen $(N^{\prime},\eta)$-balls. Each such image is contained
 in a Bowen $(N^{\prime}-c,\eta)$ ball.
As
\begin{equation*}
N^{\prime}-c=N+K+d-c\geq N,
\end{equation*}
each such ball is in fact contained in a Bowen $(N,\eta)$-ball.

Note that $F$ is the union of $(K+1)^{2}$ such sets $F_{k_1,k_2}$ and so it can be covered by the same number of balls, up to multiplicative constant.
Since
\begin{equation*}
\mu(F)\geq1-(\mathtt{d}-1)\frac{\epsilon_{0}}{3(\mathtt{d}-1)}-\frac{\epsilon_{0}}{3}-\frac{\epsilon_{0}}{3}=1-\epsilon_{0},
\end{equation*}
we get an upper bound on $\BC_{\eta}(N,\epsilon_{0})$.
Keeping in mind that $\underset{N\to\infty}{\lim}\frac{N^{\prime}}{N}=1$, we get
\begin{align*}
\underset{N\to\infty}{\liminf}\frac{\log\BC_{\eta}(N,\epsilon_{0})}{2N+1}
&\leq\underset{N\to\infty}{\lim}\frac{1}{2N+1}\Big(\frac{4\log(|\log\epsilon|)}{|\log\epsilon|}N^{\prime}+(2N^{\prime}+1)\delta\\
&\hspace{2.5cm}-\underset{i=1}{\overset{\mathtt{d}-1}{\sum}}\frac{2\delta-i}{2}\cdot((2N+1)\kappa_{i}-2K)\Big)\\
&=\frac{2\log(|\log\epsilon|)}{|\log\epsilon|}+\delta-\underset{i=1}{\overset{\mathtt{d}-1}{\sum}}\frac{2\delta-i}{2}\kappa_{i}\\
&=\frac{2\log(|\log\epsilon|)}{|\log\epsilon|}+\delta-\underset{i=1}{\overset{\mathtt{d}-1}{\sum}}\frac{2\delta-i}{2}\mu(\mathcal{F}\cusp_{\epsilon}^{i}(X))+O(\epsilon_{0}).
\end{align*}
Therefore, by Lemma \ref{entropy_bound_by_Bowen_balls},
\begin{equation*}
h_{\mu}(T_{a})\leq\underset{\epsilon_{0}\to0^{+}}{\liminf}\underset{N\to\infty}{\liminf}\frac{\log\BC_{\eta}(N,\epsilon_{0})}{2N+1}\leq\delta-\underset{i=1}{\overset{\mathtt{d}-1}{\sum}}\frac{2\delta-i}{2}\mu(\mathcal{F}\cusp_{\epsilon}^{i}(X))+\frac{2\log(|\log\epsilon|)}{|\log\epsilon|},
\end{equation*}
as desired.\\

In order to deduce the theorem for non-ergodic $\mu$, we can decompose $\mu$ to its ergodic
components $\mu=\int\mu_{t}d\tau(t)$ over some measure space. As
\begin{equation*}
\mu(\mathcal{F}\cusp_{\epsilon}^{i}(X))=\int\mu_{t}(\mathcal{F}\cusp_{\epsilon}^{i}(X))d\tau(t)
\end{equation*}
and
\begin{equation*}
h_{\mu}(T_{a})=\int h_{\mu_{t}}(T_{a})d\tau(t),
\end{equation*}
the result follows immediately from the ergodic case.\end{proof}

\section{Proofs of Theorems \ref{criterion for closed geodesics}-\ref{theorem3}}\label{proof of criterion Section}
In this section we consider periodic orbits of the geodesic flow on the unit tangent bundle of hyperbolic orbifolds, i.e.\ periodic $a_\bullet$-orbits on $\Gamma\backslash G/M$. However, the natural quotient to consider from a homogeneous dynamics point of view is $\Gamma\backslash G$, so for the proof of Theorem \ref{criterion for closed geodesics} we will instead work with periodic $Ma_\bullet$-orbits on $\Gamma\backslash G$. For $x\in\Gamma\backslash G$, we say that $xMA$ is a periodic $Ma_\bullet$-orbit of length $t>0$ if $\Gamma g ma_t=\Gamma g$ for some $m\in M$. With this definition, we can identify between periodic $a_\bullet$-orbits on $\Gamma\backslash G/M$ and periodic $Ma_\bullet$-orbits on $\Gamma\backslash G$.

The following proposition shows that nearby periodic orbits of similar length and orientation, are identical.
\begin{proposition}\label{distance_between_geodesics}
Let $\tau>0$ be small enough. Assume $x_{1},x_{2}\in\Omega_{\mathrm{nc}}^{\tau}$
belong to periodic $Ma_\bullet$-orbits of lengths $t_{1},t_{2}$ respectively, i.e.\ $x_{i}m_ia_{t_{i}}=x_{i}$ for $i=1,2$. 
Moreover, assume that $|t_{1}-t_{2}|<\frac{1}{6}\tau$, $d(m_1,m_2)<\frac{1}{6}\tau$ and $x_{2}=x_{1}u$ for $u\in B_{N,\frac{1}{6}\tau}^{+}$ or for $u\in B_{N,\frac{1}{6}\tau}^{-}$, where $N\geq \max\{\lceil t_1\rceil,\lceil t_2\rceil\}$.
Then $x_{1}$ and $x_{2}$ are part of the same periodic $Ma_\bullet$-orbit, i.e.\ $x_{1}MA=x_{2}MA$.
\end{proposition}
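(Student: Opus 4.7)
My plan is to work in $G$, use the periodicity to produce loxodromic ``return'' elements $\gamma_1,\gamma_2\in\Gamma$ for the two orbits, and then show that $\gamma_1=\gamma_2$ by bounding the displacement of $\gamma_1^{-1}\gamma_2$ (or $\gamma_2\gamma_1^{-1}$) at $g_1$ and applying the thick-part dichotomy. The equality of the return elements forces the axes to coincide, which in turn forces $u\in MA$ and hence $x_1MA=x_2MA$.

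In detail, I would lift $x_1=\Gamma g_1$ and fix the lift $g_2=g_1u$ of $x_2$. Setting $q_i=m_ia_{t_i}\in MA$, the periodicity $x_iq_i=x_i$ gives unique $\gamma_i\in\Gamma$ with $\gamma_ig_i=g_iq_i$; these are loxodromic with axes $L_i=\pi_K(g_iA)\subset\mathbb{H}^\mathtt{d}$. Assume first $u\in B_{N,\tau/6}^+$ and set $\gamma=\gamma_1^{-1}\gamma_2$; by the left-$G$-invariance of $d_{\mathcal{F}\mathbb{H}^\mathtt{d}}$,
\[
d(\gamma g_1,g_1)=d(g_1^{-1}\gamma g_1,e)=d\bigl(q_1^{-1}uq_2u^{-1},e\bigr).
\]
I split $q_1^{-1}uq_2u^{-1}=(q_1^{-1}q_2)(q_2^{-1}uq_2u^{-1})$. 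The hypotheses $|t_1-t_2|,\,d(m_1,m_2)<\tau/6$, together with the fact that $M$ commutes with $A$, give $d(q_1^{-1}q_2,e)<\tau/3$. For the other factor I decompose $u=u^+u^{MA}u^-$ in the $N^+MAN^-$-factorization: the forward-Bowen condition yields $|u^+|,|u^{MA}|\leq\tau/6$ and $|u^-|\leq(\tau/6)e^{-N}$, and conjugation by $q_2^{-1}=a_{-t_2}m_2^{-1}$ preserves this factorization and scales the $N^+$- and $N^-$-parts by $e^{-t_2}$ and $e^{t_2}$ respectively, so each component of $q_2^{-1}uq_2$ is bounded by $\tau/6$ (using $t_2\leq N$ to control the $N^-$-part). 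A triangle inequality then yields $d(q_1^{-1}uq_2u^{-1},e)<\tau$. The case $u\in B_{N,\tau/6}^-$ is handled symmetrically with $\gamma=\gamma_2\gamma_1^{-1}$ and the identity
\[
uq_2u^{-1}q_1^{-1}=u(q_2q_1^{-1})(q_1u^{-1}q_1^{-1}),
\]
since now it is conjugation by $q_1$ (rather than $q_2^{-1}$) that contracts the large $N^-$-component of $u^{-1}$.

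Having $d(\gamma g_1,g_1)<\tau$ (and hence also $d_{\mathbb{H}^\mathtt{d}}(\gamma\pi_K(g_1),\pi_K(g_1))<\tau$), a trichotomy on the type of $\gamma$ forces $\gamma=e$ for $\tau$ small enough: Lemma~\ref{deal_with_elliptic} rules out elliptic $\gamma\neq e$ for $\tau<l_0$; Remark~\ref{loxodromic_remark} rules out loxodromic $\gamma$ for $\tau<l_1$; and a parabolic $\gamma$ would fix some $\xi\in\partial\mathbb{H}^\mathtt{d}$, lie in $\Gamma_\xi$, and force $\pi_K(g_1)\in T_\tau(\Gamma_\xi)$, i.e.\ $x_1\in\mathcal{F}\cusp_\tau(X)$, contradicting $x_1\in\Omega_{\mathrm{nc}}^\tau$. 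Thus $\gamma_1=\gamma_2$, their axes coincide in a single geodesic $L=L_1=L_2$, and both $g_1,g_2$ are frames based at points of $L$ whose first vectors point in the translation direction of $\gamma_1$. Since the set of all such frames in $G$ is exactly one $MA$-orbit, namely $g_1MA$, we get $g_2\in g_1MA$, so $u\in MA$ and $x_2MA=x_1MA$.

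The main obstacle I anticipate is the book-keeping of constants in the displacement estimate: the normalization $\tau/6$ in the hypotheses is tailored so that four terms each of size at most $\tau/6$ sum to strictly less than $\tau$, and care is needed with conjugation of $u$ by elements of $MA$ under the left-invariant (but not bi-invariant) metric $d_{\mathcal{F}\mathbb{H}^\mathtt{d}}$ so that no multiplicative constants sneak in. A related subtlety is the choice of sign $\gamma_1^{-1}\gamma_2$ versus $\gamma_2\gamma_1^{-1}$ in the two Bowen cases, chosen so that the relevant conjugation of $u$ is contracting on its large component rather than expanding.
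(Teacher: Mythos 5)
Your proof is correct and takes essentially the same route as the paper's: produce the return elements $\gamma_1,\gamma_2$, bound the displacement of $\gamma_1^{-1}\gamma_2$ (resp.\ $\gamma_2\gamma_1^{-1}$), rule out elliptic/loxodromic/parabolic types via Lemma~\ref{deal_with_elliptic}, Remark~\ref{loxodromic_remark} and the hypothesis $x_1\in\Omega_{\mathrm{nc}}^{\tau}$ to force $\gamma_1=\gamma_2$, then deduce $u\in MA$. The small differences are genuine but cosmetic: you estimate the displacement at $g_1$ via the ``commutator'' factorization $q_1^{-1}uq_2u^{-1}=(q_1^{-1}q_2)(q_2^{-1}uq_2u^{-1})$, while the paper estimates at $g_1u$ and reorders $m_1^{-1}a_{-t_1}ua_{t_2}m_2$ into the full $N^+MAN^-$ form; and you conclude $u\in MA$ geometrically (both $g_1,g_2$ lie on the one $MA$-orbit of positively oriented frames along the common axis of $\gamma_1=\gamma_2$), whereas the paper compares two $\exp$-decompositions via injectivity of $\exp$ near $0$. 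Both variants are valid and buy nothing extra; the paper's $\exp$ argument is perhaps a hair more robust since it avoids having to verify that $g_2$ is based on the axis.

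One thing you should fix: you have the $N^+/N^-$ conventions swapped relative to the paper's. In the paper's notation $N^+$ is the unstable direction, so a forward Bowen $(N,\rho)$-ball has its $N^+$-component of size $O(\rho e^{-N})$ and its $N^-$-component of size $O(\rho)$, and $a_{-t}(\cdot)a_t$ for $t>0$ \emph{expands} $N^+$ and contracts $N^-$ (cf.\ Remark~\ref{bowen_ball_remark} and the line ``$u\in B^{N^+}_{\tau e^{-N}/6}B^{MA}_{\tau/6}B^{N^-}_{\tau/6}$'' in the paper's proof). You state the opposite in both places, but because the two swaps are made consistently, the product of the two scaling factors is the same and the appeal to $t_i\leq N$ goes through unchanged; the argument is correct but the labels should be corrected to match the paper's $\mathfrak{n}^\pm$.
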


\begin{proof}
Let $\tau\leq l_0$, for $l_0$ the constant from Lemma \ref{deal_with_elliptic}. Moreover, assume $\tau$ is small enough with respect to the injectivity radius of $\exp:\mathfrak{so}(1,\mathtt{d})\to G$ near $0\in\mathfrak{so}(1,\mathtt{d})$, as will be described momentarily.

Let $x_{1}=\Gamma g$ for $g\in G$, and $x_{2}=x_{1}u$ for $u\in B_{N,\frac{1}{6}\tau}^{+}$. 
Since $\Gamma ga_{t_{1}}m_1=\Gamma g$, there is $\gamma_{1}\in\Gamma$
such that $\gamma_{1}g=ga_{t_{1}}m_1$. Likewise, there is $\gamma_{2}\in\Gamma$
such that $\gamma_{2}gu=gua_{t_{2}}m_2$. 

Let $u=n_{+}man_{-}$ be a decomposition of $u$ in $N^+MAN^-$. Then
\begin{equation}\label{decomposition_of_normalization}
    m_1^{-1}a_{-t_{1}}ua_{t_{2}}m_2=
    \big(m_1^{-1}(a_{-t_{1}}n_{+}a_{t_{1}})m_1\big)\big(m_1^{-1}(maa_{t_{2}-t_{1}})m_2\big)\big(m_2^{-1}(a_{-t_{2}}n_{-}a_{t_{2}})m_2\big).
\end{equation}
The conjugation by $a_{t_i}$ shrinks the $N^{-}$ part by $e^{t_i}$ and
enlarges the $N^{+}$ part by the
same factor. Conjugation of $N^+$ and $N^{-}$ by $M$ does not change the size. 
Since $u\in B_{\frac{1}{6}\tau e^{-N}}^{N^{+}}B_{\frac{1}{6}\tau}^{MA}B_{\frac{1}{6}\tau}^{N^{-}}$, it then follows that
\begin{align*}
    d(\gamma_{1}^{-1}\gamma_{2}gu,gu)&=d(m_1^{-1}a_{-t_{1}}ua_{t_{2}}m_2,u)\leq d(m_1^{-1}a_{-t_{1}}ua_{t_{2}}m_2,e)+d(e,u)\\
    &\leq(3\cdot\frac{1}{6}\tau+|t_1-t_2|+d(m_1,m_2))+\frac{1}{6}\tau<\tau.
\end{align*}

Since $\tau\leq l_0$, it follows from Lemma \ref{deal_with_elliptic} that if $\gamma_1^{-1}\gamma_2\not=e$ then it is not elliptic. By Remark~\ref{loxodromic_remark}, $\gamma_1^{-1}\gamma_2$ is neither loxodromic. Moreover, it cannot be parabolic, since it moves $gu$ by less than $\tau$ yet $x_2=\Gamma gu\in \Omega_{\mathrm{nc}}^{\tau}$. Therefore, $\gamma_{1}^{-1}\gamma_{2}$ must be the identity.

In particular, we get that $u=m_1a_{t_{1}}ua_{-t_{2}}m_2^{-1}$. 
Then both Equation \eqref{decomposition_of_normalization} and $u=n_+amn_-$ decompose $u$ as $u=\exp(X_1)=\exp(X_2)$ for $X_1,X_2\in\mathfrak{so}(1,\mathtt{d})$ close to $0$.
Assuming that $\tau$ was chosen to be small enough (in a way which does not depend on $u$) so that the exponential map is injective in a neighborhood of $0\in\mathfrak{so}(1,\mathtt{d})$ which contains both $X_1$ and $X_2$, it follows that $X_1=X_2$ and so the two decompositions are identical. In particular, $n_+=n_-=e$.
Therefore, $u\in MA$ and so $x_{1}$ and $x_{2}$ are in the same $Ma_\bullet$-orbit, as desired.\\

The same proof works for the case $u\in B_{N,\frac{1}{6}\tau}^{-}$, by replacing $t_i$ with $-t_i$ and $m_i$ with $m_i^{-1}$.
\end{proof}

The following is a trivial corollary of Proposition \ref{distance_between_geodesics}.
\begin{corollary}\label{geodesics_in_bowen} Let $0<\tau<\tilde{\epsilon}_{\mathtt{d}}$, $\rho>0$ and $N\in\mathbb{N}$. Then a forward or backward Bowen $(N,\rho)$-ball
contained in $\Omega_{\mathrm{nc}}^{\tau}$
intersects at most $\ll_{\rho} \tau^{-(\mathtt{d}^2-\mathtt{d}+2)}$ many periodic $Ma_{\bullet}$-orbits
of lengths in $(N-1,N]$.
\end{corollary}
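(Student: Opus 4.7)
The plan is to apply Proposition~\ref{distance_between_geodesics} after partitioning the orbit data finely enough that any two distinct orbits ending up in the same cell of the partition must in fact coincide. Fix a forward Bowen $(N,\rho)$-ball $yB_{N,\rho}^{+}\subset\Omega_{\mathrm{nc}}^{\tau}$, and suppose it meets distinct periodic $Ma_{\bullet}$-orbits $\mathcal{O}_{1},\ldots,\mathcal{O}_{K}$ of lengths $t_{j}\in(N-1,N]$. For each $j$ I would pick a representative $x_{j}\in\mathcal{O}_{j}\cap yB_{N,\rho}^{+}$ together with $m_{j}\in M$ satisfying $x_{j}m_{j}a_{t_{j}}=x_{j}$; the task reduces to bounding $K$.

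Next I would cut the data along three independent partitions: (i) cover $yB_{N,\rho}^{+}$ by forward Bowen balls of radius $\tau/(6C)$ for a constant $C$ chosen so that $(B_{N,\tau/(6C)}^{+})^{-1}B_{N,\tau/(6C)}^{+}\subset B_{N,\tau/6}^{+}$ (possible since conjugation by $a^{-n}$ is multiplicative and the metric is approximately left-invariant on small elements); by the reasoning of Remark~\ref{bowen_ball_remark} this uses $\ll_{\rho}\tau^{-\dim G}$ sub-balls of the form $zB_{N,\tau/(6C)}^{+}$. (ii) Partition $(N-1,N]$ into $\leq\lceil 6/\tau\rceil$ sub-intervals of length at most $\tau/6$. (iii) Partition $M$ into $\ll \tau^{-\dim M}$ balls of radius $\tau/6$. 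Declare $j_{1}\sim j_{2}$ when $(x_{j_{1}},t_{j_{1}},m_{j_{1}})$ and $(x_{j_{2}},t_{j_{2}},m_{j_{2}})$ lie in a common cell of this product partition.

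The key step is to show each equivalence class contributes at most one orbit. If $x_{j_{1}},x_{j_{2}}$ lie in the same $zB_{N,\tau/(6C)}^{+}$, then writing $x_{j_{i}}=zh_{i}$ yields $x_{j_{2}}=x_{j_{1}}u$ with $u=h_{1}^{-1}h_{2}\in B_{N,\tau/6}^{+}$ by the choice of $C$; by construction also $|t_{j_{1}}-t_{j_{2}}|<\tau/6$ and $d(m_{j_{1}},m_{j_{2}})<\tau/6$. Since $t_{j_{i}}\leq N$ forces $\lceil t_{j_{i}}\rceil\leq N$, Proposition~\ref{distance_between_geodesics} applies and gives $\mathcal{O}_{j_{1}}=\mathcal{O}_{j_{2}}$. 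Hence $K$ is bounded by the total number of occupied cells:
\[
K\;\ll_{\rho}\;\tau^{-\dim G}\cdot\tau^{-1}\cdot\tau^{-\dim M}\;=\;\tau^{-(\dim G+\dim M+1)}\;=\;\tau^{-(\mathtt{d}^{2}-\mathtt{d}+2)},
\]
using $\dim G=\mathtt{d}(\mathtt{d}+1)/2$ and $\dim M=(\mathtt{d}-1)(\mathtt{d}-2)/2$.

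The backward Bowen ball case is entirely analogous, invoking the second clause of Proposition~\ref{distance_between_geodesics}. I do not anticipate a serious obstacle; all of the content is combinatorial bookkeeping plus the routine fact that the product of two elements of $B_{N,\rho'}^{+}$ lies in $B_{N,C\rho'}^{+}$ for a suitable $C$. The only arithmetic check worth flagging is that $\dim G+\dim M+1$ sums to exactly $\mathtt{d}^{2}-\mathtt{d}+2$, which it does.
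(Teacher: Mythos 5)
Your proof is correct and follows essentially the same approach as the paper's: subdivide the Bowen ball into $\ll_\rho\tau^{-\dim G}$ smaller Bowen balls, partition the length interval into $\ll\tau^{-1}$ pieces and $M$ into $\ll\tau^{-\dim M}$ pieces, and apply Proposition~\ref{distance_between_geodesics} to show each cell contains at most one orbit. The only cosmetic difference is that the paper uses a $\tau/12$-dense subset of $M$ and a fixed radius $\tau/12$ for the sub-balls, where you instead verify the exact constant $C$ needed so that differences of elements in a sub-ball land in $B_{N,\tau/6}^{+}$; both routes land on the same exponent $\dim G+\dim M+1=\mathtt{d}^2-\mathtt{d}+2$.
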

\begin{proof}
Clearly, it suffices to prove the corollary for $\tau$ small enough with respect to Proposition \ref{distance_between_geodesics}. The proof is identical for forward and backward balls.

First, note that as $M\cong\SO(\mathtt{d}-1)$ is a $\frac{(\mathtt{d}-1)(\mathtt{d}-2)}{2}$-dimensional compact space, it has a $\frac{\tau}{12}$-dense subset of size $f\ll\tau^{-\frac{(\mathtt{d}-1)(\mathtt{d}-2)}{2}}$. 

We split the forward Bowen $(N,\rho)$-ball into
$$\ll\lceil\frac{\rho}{\frac{1}{12}\tau}\rceil^{\frac{(\mathtt{d}+1)\mathtt{d}}{2}}\ll_{\rho} {\tau}^{-\frac{(\mathtt{d}+1)\mathtt{d}}{2}}$$ 
many forward Bowen $(N,\frac{1}{12}\tau)$-balls.
Note that each such ball is contained in $\Omega_{\mathrm{nc}}^{\tau}$. 
By Proposition
\ref{distance_between_geodesics}, for any sub-interval $I\subset(N-1,N]$ of length $|I|<\frac{1}{6}\tau$, each such forward Bowen ball intersects at most $f$ many periodic $Ma_\bullet$-orbits whose lengths are in $I$. Therefore
it intersects at most $f\lceil\frac{1}{\frac{1}{6}\tau}\rceil\ll f{\tau}^{-1}$
many periodic $Ma_\bullet$-orbits of lengths in $(N-1,N]$.

Altogether, we obtain at most $$\ll_{\rho} \tau^{-\frac{(\mathtt{d}+1)\mathtt{d}}{2}}\tau^{-\frac{(\mathtt{d}-1)(\mathtt{d}-2)}{2}}\tau^{-1}=\tau^{-(\mathtt{d}^2-\mathtt{d}+2)}$$ periodic $Ma_\bullet$-orbits of lengths in $(N-1,N]$.\end{proof}

The following proposition is a major step of the proof of Theorem \ref{criterion for closed geodesics}.
\begin{proposition}\label{counting closed geodesics}
Let $\Gamma<G$ be a non-elementary geometrically finite subgroup. Let $\epsilon>0$ be small enough. Then for any $\beta\in[0,1]$, the number of periodic $Ma_\bullet$-orbits of lengths at most $T$, which spend in $\Omega_{\mathrm{c}}^{\epsilon}$ at least a fraction $\beta$ of their time, is bounded from above by 
\begin{equation*}
    \ll\lceil T\rceil|\log\epsilon|^{3}e^{(\delta-\frac{2\delta-r_{\max}}{2}\beta+\frac{2\log C}{|\log\epsilon|}+\frac{3\log|\log\epsilon|}{|\log\epsilon|})\lceil T\rceil}
\end{equation*}
\end{proposition}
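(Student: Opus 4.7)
\emph{Plan of proof.} The strategy is to combine the two main covering estimates from \S\ref{main lemma section}: Lemma~\ref{number of Z} to bound the number of possible cusp-visit patterns and Lemma~\ref{main step} (the forward version of Lemma~\ref{main_lemma}) to cover each pattern class by a controlled number of forward Bowen balls, and then use Corollary~\ref{geodesics_in_bowen} to bound how many distinct closed orbits can pass through a single Bowen ball. I will first partition the periodic $Ma_{\bullet}$-orbits of length at most $T$ into $\lceil T\rceil$ groups $\mathcal{L}_N$ according to their length lying in $(N-1,N]$; it suffices to establish the bound
\begin{equation*}
|\mathcal{L}_N\cap\{\text{fraction }\geq\beta\text{ in }\Omega_{\mathrm{c}}^{\epsilon}\}|\ll|\log\epsilon|^{3}e^{\bigl(\delta-\frac{2\delta-r_{\max}}{2}\beta+\frac{2\log C}{|\log\epsilon|}+\frac{3\log|\log\epsilon|}{|\log\epsilon|}\bigr)N}
\end{equation*}
for each such $N$, since then summing over $N=1,\dots,\lceil T\rceil$ yields the stated bound (the $\lceil T\rceil$ prefactor absorbs the sum since the exponential dominates).

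\emph{Key steps.} For each periodic orbit in $\mathcal{L}_N$, I pick a representative lift $x\in\Omega_{\mathrm{nc}}^{\epsilon_{\mathtt{d}}^{\prime}}$. This is possible because no $Ma_\bullet$-periodic orbit can stay entirely in the thin cusp regions: in particular, the orbit must meet $\Omega_{\mathrm{nc}}^{\epsilon_{\mathtt{d}}^{\prime}}$, so $x\in \Gamma O_{i_1}$ for some $i_1\leq N_1$. Since the period lies in $(N-1,N]$, the point $T_a^N x$ is within unit $a_\bullet$-time of $x$ and hence, after possibly shrinking $\epsilon_{\mathtt{d}}^{\prime}$ once using Lemma~\ref{size_of_cusps}, also lies in some $\Gamma O_{i_2}$ with $i_2\leq N_1$. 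I then classify the orbit by the cusp-visit pattern $V:\{0,\ldots,N\}\to\{0,1,\ldots,\mathtt{d}-1\}$ defined via $T_a^m x\in\Omega_{\mathrm{c},i}^{\epsilon}\iff V(m)=i$. A one-sided version of Lemma~\ref{number of Z} (whose proof goes through verbatim on the interval $[0,N]$) shows that at most $\ll |\log\epsilon|^3 e^{\frac{3\log|\log\epsilon|}{|\log\epsilon|}N}$ patterns $V$ occur, and the assumption on the orbit's cusp fraction translates into $|V^{-1}(\{1,\ldots,\mathtt{d}-1\})|\geq\beta(N-1)$. For each such $V$, Lemma~\ref{main step} covers $Z_{+}(O_{i_1},O_{i_2},V,\epsilon)$ by at most
\begin{equation*}
\ll C^{\frac{2N}{|\log\epsilon|}}\,e^{\delta N-\sum_{i=1}^{\mathtt{d}-1}\frac{2\delta-i}{2}|V^{-1}(i)|}\leq C^{\frac{2N}{|\log\epsilon|}}\,e^{\delta N-\frac{2\delta-r_{\max}}{2}\beta(N-1)}
\end{equation*}
forward Bowen $(N,\eta)$-balls, using $\delta>r_i/2$ for every cusp rank $r_i$.

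\emph{Finishing.} Each such forward Bowen $(N,\eta)$-ball sits inside $\Omega_{\mathrm{nc}}^{\tau}$ for a fixed constant $\tau>0$ (depending only on $\Gamma,\eta$), so Corollary~\ref{geodesics_in_bowen} bounds the number of periodic $Ma_\bullet$-orbits of length in $(N-1,N]$ that pass through any single such ball by an absolute constant. Multiplying the three factors (number of patterns $V$, number of Bowen balls per pattern, number of periodic orbits per ball) and summing over $N\leq\lceil T\rceil$ produces exactly the advertised bound.

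\emph{Main obstacle.} The main technical issue is arranging the endpoints of the orbit so that Lemma~\ref{main step} applies as stated, i.e.\ with both $i_1,i_2\leq N_1$ corresponding to the $O_i$'s contained in $\Omega_{\mathrm{nc}}^{\epsilon_{\mathtt{d}}^{\prime}}$ rather than only in the wider $\Omega_{\mathrm{nc}}^{\epsilon^{\prime}}$; this relies on the periodicity, together with the Lemma~\ref{size_of_cusps} estimates that control how far into the cusp the orbit can wander during a single unit of time. The second, more delicate, point is converting the covering by sets of the form $\Gamma(\gamma_1 O_l a^{-K}\cap O_{i_1})$ into an actual covering by forward Bowen balls which are thin enough for Corollary~\ref{geodesics_in_bowen} to yield a constant rather than a $\tau$-dependent bound --- this is where the choice of a uniformly thick neighborhood for the starting point $x$ is crucial and is the reason we cannot simply invoke the two-sided Lemma~\ref{main_lemma} and lose constants.
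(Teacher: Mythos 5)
Your proposal matches the paper's proof essentially step for step: partition by orbit length $N\in(N-1,N]$, observe that periodicity forces the orbit to meet $\Omega_{\mathrm{nc}}^{\epsilon_{\mathtt{d}}^{\prime}}$, classify by the one-sided cusp-visit pattern $V$ with $|V^{-1}(\{1,\dots,\mathtt d-1\})|\geq\beta(N-1)$, bound the number of patterns via Lemma~\ref{number of Z}, cover each pattern class by $\ll C^{2N/|\log\epsilon|}e^{\delta N-\frac{2\delta-r_{\max}}{2}\beta(N-1)}$ forward Bowen $(N,\eta)$-balls using the intermediate (one-sided) step of Lemma~\ref{main_lemma} (equivalently, iterating Lemma~\ref{main step}), bound orbits per ball by Corollary~\ref{geodesics_in_bowen}, and sum over $N\leq\lceil T\rceil$. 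The only cosmetic difference is that you invoke Lemma~\ref{main step} explicitly where the paper cites Lemma~\ref{main_lemma} applied to $Z_{+}(V,\epsilon,\epsilon_{c})$, but those are the same covering argument.
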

\begin{proof}
Let $N\leq \lceil T\rceil$ be a natural number. Let $\epsilon_{c}$ be small enough with respect to Lemma \ref{main_lemma}.

By Lemma \ref{size_of_cusps}, every closed geodesic must intersect the compact part of the orbifold. It follows that for each periodic $Ma_\bullet$-orbit of length in $(N-1,N]$ which spends in $\Omega_{\mathrm{c}}^{\epsilon}$ at least a fraction $\beta$ of its time, there are a function $$V:[0,N]\to\{0,\ldots,\mathtt{d}-1\}$$ 
which satisfies
$$|V^{-1}(\{1,\ldots,\mathtt{d}-1\})|\geq\beta (N-1)$$ 
and a point $x$ of the orbit, such that $x\in Z_{+}(V,\epsilon,\epsilon_{c})$. 

By Lemma \ref{main_lemma},
$Z_+(V,\epsilon,\epsilon_{c})$ can be covered by
\begin{equation*}
\ll C^{\frac{2N}{|\log\epsilon|}}\cdot e^{\delta N-\frac{2\delta-r_{\max}}{2}|V^{-1}(\{1,\ldots,\mathtt{d}-1\})|}\ll e^{(\delta-\frac{2\delta-r_{\max}}{2}\beta+\frac{2\log C}{|\log\epsilon|})N}
\end{equation*}
forward Bowen $(N,\eta)$-balls for $\eta\ll 1$. Note that each of these balls is contained in $\Omega_{\mathrm{nc}}^{\epsilon_{c}}$, and so by Corollary \ref{geodesics_in_bowen} each ball intersects at most $\ll_{\eta} 1$ many periodic $Ma_\bullet$-orbits of lengths in $(N-1,N]$.

Moreover, due to Lemma \ref{number of Z}, there are only
\begin{equation*}
    \leq |\log\epsilon|^{3}e^{\frac{3\log|\log\epsilon|}{|\log\epsilon|}N}
\end{equation*}
many functions $V$ for which $Z_{+}(V,\epsilon,\epsilon_{c})\not=\emptyset$.

Altogether, the number of periodic $Ma_\bullet$-orbits of lengths in $(N-1,N]$ which spend in $\Omega_{\mathrm{c}}^{\epsilon}$ at least a fraction $\beta$ of their time is at most
\begin{equation*}
    \ll_{\eta}f_{N}\coloneqq  |\log\epsilon|^{3}e^{\frac{3\log|\log\epsilon|}{|\log\epsilon|}N}e^{(\delta-\frac{2\delta-r_{\max}}{2}\beta+\frac{2\log C}{|\log\epsilon|})N}.
\end{equation*}
Therefore, the number of periodic $Ma_\bullet$-orbits of lengths at most $T$ which spend in $\Omega_{\mathrm{c}}^{\epsilon}$ at least a fraction $\beta$ of their time
is bounded from above by
\begin{equation*}
    \ll_{\eta} \underset{N=1}{\overset{\lceil T \rceil}{\sum}}f_{N}\leq \lceil T \rceil f_{\lceil T \rceil}=\lceil T\rceil|\log\epsilon|^{3}e^{(\delta-\frac{2\delta-r_{\max}}{2}\beta+\frac{2\log C}{|\log\epsilon|}+\frac{3\log|\log\epsilon|}{|\log\epsilon|})\lceil T\rceil}
\end{equation*}\end{proof}
For a periodic $Ma_\bullet$-orbit $l$ on $\Gamma\backslash G$, let $\tilde{\mu}_l$ be the natural $MA$-invariant probability measure on $l$, i.e.\ the product measure of the Haar measure on the $M$-part and the normalized Lebesgue measure on $[0,T)$ on the $A$-part, where $T$ is the length of the periodic orbit. The pushforward measure $(\pi_{M})_{\ast}\tilde{\mu}_l=\mu_{l}$ is the natural $A$-invariant probability measure on the corresponding periodic $a_\bullet$-orbit on $\Gamma\backslash G/M$, i.e.\ the measure which is supported on the orbit and distributes the mass uniformly on it.
For a finite set $\varphi$ of periodic $Ma_\bullet$-orbits, the natural $MA$-invariant probability measure averaging on $\varphi$ is defined by $\tilde{\mu}_{\varphi}=\frac{1}{|\varphi|}\sum_{l\in\varphi}\tilde{\mu}_l$.

\begin{proof}[Proof of Theorem \ref{criterion for closed geodesics}]
Let $\tilde{\mu}_i$ be the natural $MA$-invariant probability measure on $\psi(T_i)$. We will show that $(\tilde{\mu}_{i})_{i\in\mathbb{N}}$ satisfies the conditions of Theorem \ref{theorem1}.

Set $N_{i}=\lceil T_{i}\rceil$ and $\lambda_{i}=e^{-N_{i}}$.
Let $\epsilon_{0}>0$ be arbitrary and set $H_{i}=\lambda_{i}^{\epsilon_{0}}$.

First, we show that $\tilde{\mu}_{i}(\Omega_{\mathrm{c}}^{H_{i}})\to0$.
Since all closed geodesics intersect the compact part of the orbifold, it follows from Lemma \ref{size_of_cusps} and Remark \ref{log_distance_remark} that periodic $Ma_\bullet$-orbits which intersect $\Omega_{\mathrm{c}}^{H_i}$ spend at least $\frac{\epsilon_0 N_i}{3}$ time in $\Omega_{\mathrm{c}}^{\sqrt{H_i}}$, assuming $i$ is large enough. 
In particular, if their lengths are at most $T_i$, they spend in $\Omega_{\mathrm{c}}^{\sqrt{H_i}}$ at least a fraction $\frac{\epsilon_0}{3}$ of their time. Therefore, by Proposition~\ref{counting closed geodesics}, the number $f_i$ of periodic $Ma_\bullet$-orbits of lengths at most $T_i$ which intersect $\Omega_{\mathrm{c}}^{H_i}$ satisfies
\begin{align*}
    f_i&\ll\lceil T_i\rceil|\log\sqrt{H_i}|^{3}e^{(\delta-\frac{2\delta-r_{\max}}{6}\epsilon_0+\frac{2\log C}{|\log\sqrt{H_i}|}+\frac{3\log|\log\sqrt{H_i}|}{|\log\sqrt{H_i}|})\lceil T_i\rceil}
    \\&
    \ll_{\epsilon_0} N_i^{4+\frac{6}{\epsilon_0}}e^{(\delta-\frac{2\delta-r_{\max}}{6}\epsilon_0)N_i}.
\end{align*}
Therefore, the measure of the cusp can be bounded by
$$\tilde{\mu}_{i}(\Omega_{\mathrm{c}}^{H_{i}})=\frac{1}{|\psi(T_i)|}\underset{l\in \psi(T_i)}{\sum}\tilde{\mu}_{l}(\Omega_{\mathrm{c}}^{H_{i}})\leq\frac{1}{|\psi(T_i)|}\cdot f_{i}\cdot1\leq e^{-(\delta-\alpha_{i})T_{i}}f_{i},$$
where the RHS clearly converges to $0$ as $i\to\infty$. Together with the fact that $\tilde{\mu}_i$ is $A$-invariant and so supported on $\Omega_{\mathcal{F}}$, it follows that assumption \ref{assumption_1_thm_1} of Theorem \ref{theorem1} holds.

Next, we show that assumption \ref{assumption_2_thm_1} of Theorem \ref{theorem1} is satisfied. Let
$$E=\{(x,y)\in \mathcal{F}\core_{H_{i}}(X)\times \mathcal{F}\core_{H_{i}}(X):\:x\in yB_{1}^{N^{+}}B_{1}^{MA}B_{\lambda_{i}}^{N^{-}}\}.$$
We want to bound $\tilde{\mu}_{i}\times\tilde{\mu}_{i}(E)$. Let $(x,y)\in E$. Note that $$E_{y}=\{x_0\in \Omega_{\mathcal{F}}:\:(x_0,y)\in E\}$$ is a
subset of the backward Bowen $(N_{i},\rho)$-Ball $\Gamma yB_{N_{i},\rho}^{-}$ for $\rho\ll 1$,
which is contained in $\overline{\Omega_{\mathrm{nc}}^{H_{i}}}$. By Corollary \ref{geodesics_in_bowen}, we get that $E_y$ intersects 
$$\ll N_iH_i^{-(\mathtt{d}^2-\mathtt{d}+2)}\ll e^{2\mathtt{d}^{2}\epsilon_{0}N_{i}}$$
many periodic $Ma_\bullet$-orbits of lengths at most $N_{i}$, once $i$ is large enough. 
In particular,
$$\tilde{\mu}_{i}(E_{y})\ll\frac{1}{|\psi(T_i)|}e^{2\mathtt{d}^2\epsilon_{0}N_{i}}\leq e^{-(\delta-\alpha_{i})T_{i}}e^{2\mathtt{d}^2\epsilon_{0}N_{i}}\ll\lambda_{i}^{\delta-3\mathtt{d}^2\epsilon_{0}}$$
once $i$ is large enough.
Therefore, by Fubini's Theorem together with the fact that $\tilde{\mu}_i$ is supported on $\Omega_{\mathcal{F}}$, $$\tilde{\mu}_{i}\times\tilde{\mu}_{i}(E)\ll\lambda_{i}^{\delta-3\mathtt{d}^2\epsilon_{0}}$$
as well, as required. 

This shows, by Theorem \ref{theorem1}, that every weak-$\star$ limit of a subsequence of $(\tilde{\mu}_i)_{i\in\mathbb{N}}$ has entropy $\delta$. Due to uniqueness of measure of maximal entropy for the geodesic flow (Theorem \ref{maximal_entropy}), we obtain that $m_{\BM}$ is the unique limit of any subsequence of $(\mu_i)_{i\in\mathbb{N}}$, where $\mu_i=(\pi_M)_{\ast}\tilde{\mu}_i$, and so $(\mu_{i})_{i\in\mathbb{N}}$ converges to $m_{\BM}$. \end{proof}
From this point on, we will discuss only periodic $a_\bullet$-orbits on $\Gamma\backslash G/M$ (rather than periodic $Ma_\bullet$-orbits on $\Gamma\backslash G$).

\begin{proof}[Proof of Theorem \ref{theorem3}]
Assume by contradiction that such $h$ does not exist. Then there is a positive sequence $h_{i}\to0$
and an increasing sequence $T_{i}\to\infty$ such that the sets 
$$A_{i}^{+}=\left\{l\in\Per_{\Gamma}(T_i):\:\int_{l}fd\mu_{l}-\int_{\Gamma\backslash G/M}fdm_{\BM}>\epsilon\right\}$$
are of magnitude $|A_{i}^{+}|>\frac{1}{2}e^{(\delta-h_{i})T_{i}}$.
If this is not the case, then we would simply consider the sets
$$A_{i}^{-}=\left\{l\in\Per_{\Gamma}(T_i):\:\int_{\Gamma\backslash G/M}fdm_{\BM}-\int_{l}fd\mu_{l}>\epsilon\right\}$$
instead of $A_{i}^{+}$, and continue the same way.

Consider the measures $\mu_{i}=\frac{1}{|A_{i}^{+}|}\underset{l\in A_{i}^{+}}{\sum}\mu_{l}$ averaging on $A_{i}^{+}$. By Theorem~\ref{criterion for closed geodesics}, the sequence $(\mu_{i})_{i\in\mathbb{N}}$ converges to $m_{\BM}$.
However,
\begin{align*}
\Big|\int fd\mu_{i}-\int fdm_{\BM}\Big|&=\Big|\frac{1}{|A_{i}^{+}|}\underset{l\in A_{i}^{+}}{\sum}\int fd\mu_{l}-\int fdm_{\BM}\Big|\\&
=\frac{1}{|A_{i}^{+}|}\Big|\underset{l\in A_{i}^{+}}{\sum}[\int fd\mu_{l}-\int fdm_{\BM}]\Big|>\frac{1}{|A_{i}^{+}|}|\underset{l\in A_{i}^{+}}{\sum}\epsilon|=\epsilon
\end{align*}
for all $i$, which is a contradiction.
\end{proof}

\section{Proofs of Theorem \ref{amenable covers} and Corollary \ref{equidistribution in the covering}}\label{Amenable covers Section}
A key tool for the proof is the following theorem of \cite{paulin2015equilibrium}, which holds for the general settings of a complete connected Riemannian manifold with pinched negative curvature and a reversible potential $F$. Here we restrict ourselves to the hyperbolic space and take the potential $F\equiv0$. In our settings the theorem is due to Roblin \cite{roblin2005un} (cf.~also related work by Sharp \cite{sharp2007critical}), and goes back to Brooks \cite{brooks1985the} in less general settings.
\begin{theorem}[{\cite[Theorem 2.2.2]{roblin2005un}},{\cite[Theorem 11.14]{paulin2015equilibrium}}]\label{PPS2}
Let $\Gamma_0<G$ be a discrete subgroup. Let $\Gamma\triangleleft\Gamma_0$ be a normal non-elementary subgroup, such that $\Gamma\backslash\Gamma_0$ is amenable. 
Then $\delta(\Gamma)=\delta(\Gamma_0)$.
\end{theorem}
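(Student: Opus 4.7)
The easy half, $\delta(\Gamma)\le\delta(\Gamma_0)$, is immediate: since $\Gamma\subseteq\Gamma_0$, the Poincaré series $g_s(o,o)$ of $\Gamma$ is dominated term-by-term by that of $\Gamma_0$, so convergence at any $s>\delta(\Gamma_0)$ transfers to $\Gamma$. No hypothesis beyond the inclusion is used here, and in particular amenability of $\Gamma\backslash\Gamma_0$ plays no role. The substance of the theorem is the reverse inequality $\delta(\Gamma)\ge\delta(\Gamma_0)$, for which amenability is essential (and in fact necessary, as Brooks showed).

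The plan for the substantive direction is to build out of a Patterson-Sullivan density for $\Gamma$ a \emph{nontrivial} $\Gamma_0$-conformal density of the \emph{same} dimension $\delta:=\delta(\Gamma)$. Existence of such a density forces $\delta(\Gamma_0)\le\delta$ by the standard variational characterization of the critical exponent (infimum of dimensions of conformal densities for the group; cf.\ the Patterson construction recalled in \S\ref{PS BM Section}). Fix a $\Gamma$-Patterson-Sullivan density $(\mu_x)_{x\in\mathbb{H}^{\mathtt{d}}}$, pick a section $s\colon H\to\Gamma_0$ of the coset projection $\Gamma_0\to H:=\Gamma\backslash\Gamma_0$, and observe that the formal sum $\sum_{h\in H}s(h)_{\ast}\mu_x$ would be tautologically $\Gamma_0$-equivariant and $\delta$-conformal, but in general has infinite total mass. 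Amenability of $H$ enters precisely to regularize this: for a left Følner sequence $(F_n)\subset H$, set
\begin{equation*}
\nu^n_x := \frac{1}{|F_n|}\sum_{h\in F_n} s(h)_{\ast}\mu_x.
\end{equation*}
Each $\nu^n_x$ is automatically $\delta$-conformal in $x$ (the cocycle $\beta_\xi(x,y)$ is pulled out linearly from the sum), and for any $\gamma_0\in\Gamma_0$ projecting to $h_0\in H$ the defect $(\gamma_0)_{\ast}\nu^n_x - \nu^n_{\gamma_0 x}$ has total-variation norm bounded by $|h_0 F_n\triangle F_n|/|F_n|\cdot\max_{h}|\mu_{s(h)x}|$, which tends to $0$ by the Følner property (up to controlling the sup of the masses, which is handled by restricting to bounded sets in $x$). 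Any weak-$\star$ subsequential limit $\nu_x$ is therefore a genuine $\Gamma_0$-equivariant, $\delta$-conformal density.

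The main obstacle, and the technical core of the argument, is ensuring that such a limit is \emph{nonzero}, i.e.\ that $|\nu^n_o|$ does not collapse to $0$ along the Følner sequence. I would address this in the style of Patterson's original construction, replacing $\mu_x$ by a regularized family built from the Poincaré series $g_s^{\Gamma}$ at $s\downarrow\delta$ multiplied by a slowly-varying function chosen to force divergence at $s=\delta$; this is automatic since $\Gamma$, being non-elementary and geometrically finite in the applications (though one can reduce to the divergence case more generally), is of divergence type at its critical exponent. A shadow-lemma computation then compares $|s(h)_{\ast}\mu_o|=|\mu_{s(h)o}|$ to $e^{-\delta\, d(o,s(h)o)}$ up to multiplicative constants uniform in $h$, so the averaged mass is morally
\begin{equation*}
|\nu^n_o|\asymp \frac{1}{|F_n|}\sum_{h\in F_n}e^{-\delta\, d(o,s(h)o)}.
\end{equation*}
The key estimate to verify is that, with $F_n$ chosen to be both Følner and to consist of cosets whose representatives can be taken with $d(o,s(h)o)$ of moderate size (using that, by definition of $\delta(\Gamma_0)$, the number of $\gamma_0\in\Gamma_0$ with $d(o,\gamma_0 o)\le R$ grows like $e^{\delta(\Gamma_0)R}$ to leading order), the right-hand side stays bounded below by a positive constant along a subsequence whenever $\delta<\delta(\Gamma_0)$ would be assumed — yielding a contradiction, hence $\delta\ge\delta(\Gamma_0)$.

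The delicate point is the simultaneous choice of Følner sets $F_n$ compatible with the geometric coset growth; standard tools here are Kesten's criterion for amenability (controlling the spectral radius of a random walk on $H$) combined with Sullivan's relation between the critical exponent and the bottom of the $L^2$-spectrum of the Laplacian on $\Gamma\backslash\mathbb{H}^{\mathtt{d}}$ (for $\delta>\tfrac{\mathtt{d}-1}{2}$) — this is the route taken by Brooks, while Roblin's and Paulin-Pollicott-Schapira's approach stays purely within conformal densities and shadow lemmas as sketched above. Either way, once the mass non-collapse is in hand, the Følner-averaging machinery delivers the desired $\Gamma_0$-conformal density, completing the proof.
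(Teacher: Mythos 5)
First, a point of comparison: the paper does not prove this statement at all — it is imported as a known theorem of Roblin and Paulin--Pollicott--Schapira (going back to Brooks, cf.\ also Sharp), so there is no internal proof to measure your argument against; what follows is an assessment of your sketch on its own terms. Your easy half ($\delta(\Gamma)\le\delta(\Gamma_0)$ by term-by-term domination) is correct, and the overall skeleton — average coset-twisted Patterson--Sullivan densities over F\o lner sets, pass to a weak-$\star$ limit, and invoke the shadow-lemma corollary that any nonzero $\Gamma_0$-conformal density of dimension $s$ forces $s\ge\delta(\Gamma_0)$ — is a legitimate strategy in outline. (Minor slip: the correctly twisted family is $\mu^h_x=(s(h))_{\ast}\mu_{s(h)^{-1}x}$, not $(s(h))_{\ast}\mu_x$; only the former is $\delta$-conformal in $x$ and well defined on cosets, using normality of $\Gamma$.)

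The genuine gap is exactly the step you flag as the "technical core," and your treatment of it does not work. The estimate $|\mu_{s(h)o}|\asymp e^{-\delta\,d(o,s(h)o)}$ is false: for a basepoint moved by an element of $\Gamma_0\smallsetminus\Gamma$ one only has the two-sided bound $e^{-\delta d(o,s(h)o)}\le |\mu_{s(h)o}|/|\mu_o|\le e^{\delta d(o,s(h)o)}$, and Sullivan's shadow lemma (which concerns shadows of $\Gamma$-orbit points, not total masses at $\Gamma_0$-translates) gives nothing sharper here. Consequently both halves of your F\o lner argument hinge on an uncontrolled quantity: the non-collapse of $|\nu^n_o|$ and the equivariance defect (your bound involves $\max_h|\mu_{s(h)x}|$, which can grow without bound along $F_n$) each require a \emph{weighted} F\o lner property for the weights $w(h)=|\mu^h_o|$, and plain amenability of $H$ does not supply F\o lner sets adapted to these weights; asserting that the averaged masses "stay bounded below along a subsequence, yielding a contradiction" is a restatement of the difficulty, not an argument. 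This is precisely where the actual proofs do their work: Roblin and Paulin--Pollicott--Schapira argue via divergence type, Hopf--Tsuji--Sullivan conservativity and a Fatou-type theorem for conformal densities (or, in later treatments, via twisted Poincar\'e series and spectral radius estimates), while your fallback through Kesten's criterion and Sullivan's $\lambda_0$--$\delta$ correspondence is Brooks' route and only applies when $\delta(\Gamma_0)>\frac{\mathtt{d}-1}{2}$, a restriction absent from the statement. As it stands, the proposal identifies the right obstacle but does not overcome it.
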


We will also use the following theorem. In order to obtain this form of the statement, the fact that the critical exponent of a non-elementary subgroup is strictly positive was used.
\begin{theorem}[{\cite[Theorem 4.7]{paulin2015equilibrium}}]\label{PPS1}
Let $\Gamma<G$ be a discrete non-elementary subgroup. Let $W$ be any relatively compact open subset of $T^{1}(\Gamma\backslash\mathbb{H}^{\mathtt{d}})$ meeting the non-wandering set of the geodesic flow $\Omega_{\Gamma}$.
Then 
\begin{equation*}\lim_{T\to\infty}\frac{1}{T}\log(\left|\{p\in \Per_{\Gamma}(T):\:p\cap W\not=\emptyset\}\right|)=\delta(\Gamma)
\end{equation*}
\end{theorem}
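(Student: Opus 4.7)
The plan is to establish the two matching inequalities
$$\limsup_{T\to\infty}\frac{1}{T}\log N_W(T)\;\leq\;\delta(\Gamma)\;\leq\;\liminf_{T\to\infty}\frac{1}{T}\log N_W(T),$$
where $N_W(T)=|\{p\in\Per_{\Gamma}(T):p\cap W\neq\emptyset\}|$. Only the lower bound carries real content; the upper bound is a direct orbit-counting estimate.

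For the upper bound I would reduce to counting elements of $\Gamma$ in a ball around a fixed basepoint. A primitive closed geodesic in $\Gamma\backslash\mathbb{H}^{\mathtt{d}}$ corresponds to a $\Gamma$-conjugacy class of primitive loxodromic elements whose translation length equals the length of the geodesic, and a non-primitive periodic orbit is an iterate of a primitive one. Lift the relatively compact $W$ to a bounded set $\tilde W\subset T^{1}\mathbb{H}^{\mathtt{d}}$ and fix a basepoint $o\in\pi_{K}(\tilde W)$. Any loxodromic $\gamma\in\Gamma$ of translation length at most $T$ whose axis intersects $\tilde W$ satisfies $d(o,\gamma o)\leq T+C$ for some $C=C(W)$. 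Hence $N_W(T)$ is dominated by the orbital count $|N(T+C,o)|$, which by Proposition~\ref{number_of_elements_which_translate} is $\ll e^{\delta T}$, giving the upper bound.

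For the lower bound I would combine a shadow-lemma estimate on the Patterson-Sullivan measure $\mu_0$ with a closing lemma. Choose $v_0\in W\cap\Omega_{\Gamma}$ whose forward and backward endpoints $\xi^{\pm}\in\Lambda(\Gamma)$ are conical limit points; such $v_0$ exists because $W$ is open and meets $\Omega_{\Gamma}$. The Sullivan shadow lemma then provides lower bounds of order $e^{-\delta d(o,\gamma o)}$ for the $\mu_0$-mass of shadows of balls at $\gamma o$, for $\gamma\in\Gamma$. Summing these estimates over $\gamma_1,\gamma_2\in\Gamma$ whose positions force the geodesic segment $[\gamma_1 o,\gamma_2 o]$ to enter a lift of $W$ and to have hyperbolic length in $[T-1,T]$ shows that the number of such pairs is at least $c\,e^{\delta T}$. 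By the Anosov/Hopf closing lemma in pinched negative curvature, each near-returning orbit segment of length $T+O(1)$ uniformly shadows a genuine periodic orbit that also meets $W$.

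The hard part is passing from the abundance of near-closing orbit segments to a lower bound on the number of \emph{distinct} closed geodesics: a single short periodic orbit of length $\ell\leq T$ may account for $O(T/\ell)$ near-returns per iterate and must be counted only once. In the divergence-type, finite-$m_{\BM}$ setting I would control this multiplicity by invoking mixing of the Bowen-Margulis measure (after Babillot and Winter), which, combined with an effective closing lemma, yields a quantitative equidistribution statement for closed orbits refining Theorem~\ref{number_of_periodic_orbits} and in particular the desired lower bound; this is essentially the route of \cite{margulis2004some,roblin2003ergodicite,paulin2015equilibrium}. The technical heart of the argument is ensuring that the multiplicity correction is sub-exponential and does not eat into the rate $\delta(\Gamma)$.
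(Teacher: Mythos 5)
The paper itself does not prove Theorem~\ref{PPS1}: it is quoted verbatim from \cite{paulin2015equilibrium}, so there is no internal argument to compare yours with, and your proposal has to stand on its own. Your upper bound does: choosing for each periodic orbit meeting $W$ a lift through a fixed bounded set $\tilde W$ and the corresponding loxodromic element gives an injection into $\{\gamma:\,d(o,\gamma o)\le T+C\}$, and Proposition~\ref{number_of_elements_which_translate} finishes it.

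The lower bound, however, has a genuine gap. You only argue it ``in the divergence-type, finite-$m_{\BM}$ setting'', invoking mixing of the Bowen--Margulis measure and the equidistribution of closed geodesics to control multiplicities. But the theorem is stated for an \emph{arbitrary} discrete non-elementary $\Gamma$: no geometric finiteness, no divergence type, and possibly infinite (and non-mixing) Bowen--Margulis measure. This generality is exactly what the paper needs: Theorem~\ref{PPS1} is applied, via Lemma~\ref{multiplicity in the cover}, to a normal subgroup $\Gamma\triangleleft\Gamma_0$ which is typically not geometrically finite and whose Bowen--Margulis measure is typically infinite, so the machinery you defer to is unavailable precisely where the statement is used; and where it is available (geometrically finite, finite $m_{\BM}$), appealing to the equidistribution/counting results of \cite{margulis2004some,roblin2003ergodicite,paulin2015equilibrium} (i.e.\ Theorem~\ref{number_of_periodic_orbits} and its refinements) to prove a mere exponential growth rate is essentially circular, since those results are strictly stronger. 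The multiplicity issue you correctly single out as ``the hard part'' must instead be handled elementarily, as in \cite{paulin2015equilibrium}: a closed orbit of length at most $T$ produced by the closing lemma passes within a bounded distance of $o$ and of $\gamma o$ at prescribed times, so a single periodic orbit can arise from at most $O(T)$ (in any case $e^{o(T)}$) elements $\gamma$ with $d(o,\gamma o)\le T$; combined with the fact, valid for every non-elementary discrete group by a concatenation/supermultiplicativity (or shadow-lemma) argument, that $\frac{1}{T}\log|N(T,o)|\to\delta(\Gamma)$, this purely geometric count already yields $\gg e^{(\delta-o(1))T}$ distinct periodic orbits meeting $W$ with no ergodic-theoretic input. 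As written, your argument establishes the theorem only under additional hypotheses that are not part of its statement.
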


In order to prove Theorem $\ref{amenable covers}$ we need to pass the question from counting periodic $a_\bullet$-orbits in $\Gamma\backslash G/M$ to counting them in $\Gamma_0\backslash G/M$. In doing so, for each $p\in\Per_{\Gamma}$ we consider its projection $\pi_{\Gamma_0}(p)$ to $\Gamma_0\backslash G/M$, which is a periodic $a_\bullet$-orbit in $\Gamma_0\backslash G/M$. We need to show that each element of $\Per_{\Gamma_0}$ is not counted too many times.
\begin{lemma}\label{multiplicity in the cover}
Let $\Gamma_0<G$ be discrete, and let $\Gamma\triangleleft\Gamma_0$ be a non-elementary normal subgroup.
Then there exists some relatively compact open set $W\subset T^{1}(\Gamma\backslash \mathbb{H}^{\mathtt{d}})$ intersecting the non-wandering set $\Omega_\Gamma$, with the following property. For all $p_0\in\Per_{\Gamma_0}$, the number of $p\in\Per_{\Gamma}(T)$ intersecting $W$, such that $\pi_{\Gamma_0}(p)=p_0$, is $\ll T$.
\end{lemma}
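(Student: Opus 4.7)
The plan is to choose $W$ as a small ball around a carefully chosen point so that it embeds injectively into $T^{1}(\Gamma_0\backslash\mathbb{H}^{\mathtt{d}})$ under $\pi_{\Gamma_0}$, and then to bound the number of closed lifts of $p_0$ meeting $W$ by the number of connected components of $p_0\cap W'$, where $W':=\pi_{\Gamma_0}(W)$.

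First I would pick $\tilde x_0\in T^{1}\mathbb{H}^{\mathtt{d}}$ whose $\pi_{\Gamma}$-image lies in $\Omega_{\Gamma}$ and whose $\Gamma_0$-injectivity radius exceeds some $\epsilon>0$. Such a point exists: since $\Gamma$ is non-elementary, it contains a loxodromic element whose axis projects to a closed geodesic in $\Omega_{\Gamma}$, and any closed geodesic must eventually leave the cusps of $\Gamma_0\backslash\mathbb{H}^{\mathtt{d}}$. Let $\tilde W$ be the open $T^{1}$-ball of radius $r<\epsilon/2$ around $\tilde x_0$, and set $W:=\pi_{\Gamma}(\tilde W)$ and $W':=\pi_{\Gamma_0}(\tilde W)$. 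The injectivity radius bound ensures that $\pi_{\Gamma}|_{\tilde W}$ and $\pi_{\Gamma_0}|_{\tilde W}$ are both homeomorphisms onto their images, and hence so is $\pi_{\Gamma_0}|_{W}\colon W\to W'$.

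Next I would show that the number of connected components of $p_0\cap W'$ is $\ll T_0$, where $T_0:=|p_0|$. Let $\gamma_0\in\Gamma_0$ be a primitive loxodromic with axis $\tilde p_0$ covering $p_0$, so $T_0$ is its translation length. The components of $p_0\cap W'$ biject with the $\Gamma_0$-translates $g\tilde W$ meeting a $\gamma_0$-fundamental arc of $\tilde p_0$ of length $T_0$, modulo $\langle\gamma_0\rangle$. By the injectivity radius assumption, distinct centers $g\tilde x_0$ are at $\mathbb{H}^{\mathtt{d}}$-distance $\ge 2\epsilon$, so the $T^{1}$-balls $g\tilde W$ are pairwise separated by at least $2\epsilon-2r>0$; since a geodesic cannot exit and re-enter a convex hyperbolic ball, consecutive visits of $\tilde p_0$ to distinct translates take time $\ge 2\epsilon-2r$, bounding the number of visits in a segment of length $T_0$ by $\ll T_0$.

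Finally, each $p\in\Per_{\Gamma}(T)$ with $\pi_{\Gamma_0}(p)=p_0$ and $p\cap W\ne\emptyset$ contributes at least one arc to $p\cap W$; such an arc is a connected component of $\pi_{\Gamma_0}^{-1}(p_0)\cap W$ and maps, under the bijection $\pi_{\Gamma_0}|_{W}$, to a connected component of $p_0\cap W'$. Distinct periodic $a_\bullet$-orbits are disjoint in $T^{1}(\Gamma\backslash\mathbb{H}^{\mathtt{d}})$, so the arcs contributed by distinct $p$'s are disjoint, and since $\pi_{\Gamma_0}|_{W}$ is injective they map to distinct components of $p_0\cap W'$. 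Hence the number of such $p$ is at most the number of components of $p_0\cap W'$, which by the previous step is $\ll T_0\le T$ (using $T_0\le|p|\le T$ for any $p$ in the count). The main subtlety lies in the first step, where one must ensure that $W$ simultaneously meets $\Omega_{\Gamma}$ and embeds injectively under $\pi_{\Gamma_0}$; the combinatorial and geometric counting in the remaining steps is then elementary.
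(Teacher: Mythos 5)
Your proof is correct, and it is built on the same key construction as the paper's: a small ball $W$ around a lift $v\in\pi_{\Gamma}^{-1}(\Omega_{\Gamma})$ chosen so that $\pi_{\Gamma_0}$ is injective on a slightly larger ball. The counting step, however, is organized differently. You count connected components of $p_0\cap W'$ via a bijection with $\Gamma_0$-translates of $\tilde W$ meeting a fundamental arc of the axis, and you bound those by a separation argument plus a convexity claim (``a geodesic cannot exit and re-enter a convex hyperbolic ball''). The paper instead performs a direct pigeonhole on the $a_\bullet$-flow time of $p_0$: if $p_1,p_2\in\Per_{\Gamma}(T)$ both meet $W$ and project to $p_0$, pick lifts $v_1,v_2\in U_{\delta}$ of intersection points; there is $t$ with $\Gamma_0 v_1=\Gamma_0 v_2 a_t$, and if $|t|<\delta$ then $d(v_2a_t,v)<2\delta$ forces $v_1=v_2 a_t$, hence $p_1=p_2$. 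Pigeonholing $t\in\mathbb{R}/T_0\mathbb{Z}$ with $T_0\le T$ gives $\ll T/\delta$ immediately. What the paper's route buys is that it uses only the identity $d_{T^1}(v,va_t)=|t|$ built into the metric in \S\ref{flow section}, avoiding the convexity step entirely. Your convexity claim is stated for ``hyperbolic balls,'' but $\tilde W$ is a ball in the $T^{1}$-metric, not an $\mathbb{H}^{\mathtt{d}}$-ball; it does in fact hold, because $t\mapsto d_{T^1\mathbb{H}^{\mathtt{d}}}(v_t,g\tilde x_0)$ is a supremum of convex functions and hence convex, so each translate is visited along a single arc --- but this deserves to be said explicitly, since as written the justification does not literally apply. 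With that clarification the argument is sound; it is simply a slightly more roundabout path to the same bound.
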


\begin{proof}
Let $v\in\pi_{\Gamma}^{-1}(\Omega_\Gamma)\subset T^{1}(\mathbb{H}^{\mathtt{d}})$ and $\delta>0$ be such that the projection $\pi_{\Gamma_0}$ is injective on $U_{2\delta}$, where $U_{r}=B_{r}^{T^{1}(\mathbb{H}^{\mathtt{d}})}(v)$ is the ball of radius $r$ around $v$. Let $W=\pi_{\Gamma}(U_{\delta})$.

Let $\Gamma v_1,\Gamma v_2 \in \Gamma \backslash G /M$ be points of some periodic $a_\bullet$-orbits $p_1,p_2\in\Per_{\Gamma}(T)$ respectively. Assume both orbits intersect $W$, and so without loss of generality we may assume $v_1,v_2\in U_{\delta}$. Moreover, assume that $\Gamma_0 v_1$ and $\Gamma_0 v_2$ belong to the same periodic $a_\bullet$-orbit, i.e.\ $\pi_{\Gamma_0}(p_1)=\pi_{\Gamma_0}(p_2)$, and so there is some $t\in\mathbb{R}$ such that $\Gamma_{0}v_1=\Gamma_{0}v_2 a_t$.

We assume that $|t|<\delta$ and aim to prove that $p_1=p_2$.
Indeed, note that $d(v_2 a_t,v)<2\delta$ from the triangle inequality. Since $v_1,v_2 a_t\in U_{2\delta}$ and $\Gamma_0 v_1 =\Gamma_0 v_2 a_t$, it follows from injectivity of the projection $\pi_{\Gamma_0}$ on this set, that $v_1=v_2 a_t$. In particular $p_1=p_2$.

It follows that for a given $p_0\in\Per_{\Gamma_0}$ there can be at most $\ll \frac{T}{\delta}$ distinct elements $p\in\Per_{\Gamma}(T)$ intersecting $W$ and satisfying $\pi_{\Gamma_0}(p)=p_0$. \end{proof}

\begin{proof}[Proof of Theorem \ref{amenable covers}]
Let $\phi(T)$ be as in the statement of Theorem \ref{amenable covers}, i.e.\ the set of periodic $a_\bullet$-orbits in $\Gamma_0\backslash G/M$ of length at most $T$, which remain periodic and of the same length in $\Gamma\backslash G/M$. Let $\phi^{\prime}(T)$ be the set of periodic $a_\bullet$-orbits in $\Gamma_0\backslash G/M$, which in $\Gamma\backslash G/M$ are periodic and of length at most $T$ (not necessarily the same as their length in $\Gamma_0\backslash G/M$). Let $W$ be as in Lemma \ref{multiplicity in the cover}.

Clearly, $$\{\pi_{\Gamma_0}(p):\: p\in \Per_{\Gamma}(T),\:p\cap W\not=\emptyset\}\subset\phi^{\prime}(T).$$
Using Lemma \ref{multiplicity in the cover} it follows that
$$|\phi^{\prime}(T)|\gg\frac{1}{T}|\{p\in \Per_{\Gamma}(T):\:p\cap W\not=\emptyset\}|,$$
and so, by Theorem \ref{PPS1},
\begin{equation}\label{liminf_of_phi_prime}
\liminf_{T\to\infty}\frac{1}{T}\log|\phi^{\prime}(T)|\geq\delta(\Gamma).
\end{equation}

Note that $\phi^{\prime}(T)\smallsetminus\phi(T)$ consists of elements $p_0\in\Per_{\Gamma_0}$ that remain periodic in $\Gamma\backslash G/M$, but of different length than their original length. This situation could happen only if $p_0$ is being unfolded in $\Gamma$ an integer amount of times, and in particular its length in $\Gamma_0\backslash G/M$ must be at most $\frac{T}{2}$.

Since $\phi(T)\subset\phi^{\prime}(T)$, we get 
$$|\phi(T)|\geq |\phi^{\prime}(T)|-|\Per_{\Gamma_0}(\frac{T}{2})|.$$
Combining Theorem \ref{number_of_periodic_orbits}, Theorem \ref{PPS2}, and Equation \eqref{liminf_of_phi_prime}, we obtain that
$$\liminf_{T\to\infty}\frac{1}{T}\log|\phi(T)|\geq\delta(\Gamma)=\delta(\Gamma_0).$$
The result now follows directly from Theorem \ref{criterion for closed geodesics}.\end{proof}
\begin{remark}\label{remark on covers}
It is clear from the proof of Theorem \ref{amenable covers} that the natural probability measures $\nu_{T}^{\prime}$ averaging on $\phi^{\prime}(T)$, rather than on $\phi(T)$, equidistribute as well, since $\phi(T)\subset\phi^{\prime}(T)$ and our argument only required the set on which we average to be large enough.
\end{remark}

\begin{remark}\label{remark_about_liminf_phi_prime}
In the proof of Theorem \ref{amenable covers}, namely in Equation \eqref{liminf_of_phi_prime}, we proved that
$$\liminf_{T\to\infty}\frac{1}{T}\log|\phi^{\prime}(T)|\geq\delta(\Gamma).$$ The proof of this fact did not use the assumption that $\Gamma\backslash \Gamma_0$ is amenable.
Moreover, it can be shown that in any regular cover, amenable or not, this $\liminf$ is in fact a $\lim$ and its value is precisely $\delta(\Gamma)$. We will not require this statement in this paper.
\end{remark}

\begin{proof}[Proof of Corollary \ref{equidistribution in the covering}]
Define $\nu^{\prime}_T$ to be the natural invariant probability measure averaging on $\phi^{\prime}(T)$, and $\mu_{T}$ to be the one averaging on $\Per_{\Gamma}(T)$. In the latter case, it is an average over an infinite set, so we choose to normalize the infinite sum by $N_{T}<\infty$, i.e.\ $\mu_{T}=\frac{1}{N_T}\sum_{l\in\Per_{\Gamma}(T)}\mu_{l}$.
    
Let $f$ be as in the statement of Corollary \ref{equidistribution in the covering}. Define $\tilde{f}:\Gamma_0\backslash G/M\to\mathbb{R}$ by $$\tilde{f}(\Gamma_0 v)=\sum_{\tau\in\Gamma\backslash\Gamma_0}f(\tau \Gamma v).$$ Clearly, the sum absolutely converges due to the assumption on $f$, and depends only on $\Gamma_0 v$ (not on $v$ itself). Then $\tilde{f}$ is a continuous function, and by assumption it is bounded as well.

Note that $\phi^{\prime}(T)$ is the projection from $\Gamma\backslash G/M$ to $\Gamma_0\backslash G/M$ of any set of representatives for the $\Gamma\backslash\Gamma_0$-equivalence classes of $\Per_{\Gamma}(T)$.
Therefore, by construction, $$\int fd\mu_{T}=\int\tilde{f}d\nu^{\prime}_{T}.$$
    
By Theorem \ref{amenable covers} (or to be precise, by Remark~\ref{remark on covers}), $\nu^{\prime}_T$ converges to $m_{\BM}$, so $$\lim_{T\to\infty}\int\tilde{f}d\nu^{\prime}_{T}=\int\tilde{f}dm_{\BM},$$
which gives the desired equality.\end{proof}
 
\section{Proof of Theorem \ref{not all of the mass escapes in covers}}
\begin{proof}[Proof of Theorem \ref{not all of the mass escapes in covers}]
The key idea is to show that most closed geodesics spend at least some bounded (from below) fraction of their time in the compact part of the orbifold.

Let $\beta,\epsilon>0$ be small enough so that
\begin{equation}\label{choice_of_beta}
g(\beta,\epsilon)\coloneqq\delta(\Gamma_0)-\frac{2\delta(\Gamma_0)-r_{\max}(\Gamma_0)}{2}(1-\beta)+\frac{2\log C}{|\log\epsilon|}+\frac{3\log|\log\epsilon|}{|\log\epsilon|}<\delta(\Gamma).
\end{equation}
This is possible since
$$\lim_{(\beta,\epsilon)\to(0^+,0^+)} g(\beta,\epsilon)=\frac{r_{\max}(\Gamma_0)}{2}<\delta(\Gamma).$$
We may choose $\epsilon>0$ to be small enough with respect to Proposition \ref{counting closed geodesics} as well.

Let $\pi_{M}$ stand for the projection from $\Gamma_0\backslash G$ to $\Gamma_0\backslash G/M$.
By Proposition \ref{counting closed geodesics}, the number of periodic $a_\bullet$-orbits on $\Gamma_0\backslash G/M$ with lengths at most $T$, which spend at most a fraction $\beta$ of their time in $\pi_{M}(\Omega_{\mathrm{nc}}^{\epsilon})$, and so spend at least a fraction $(1-\beta)$ of their time in $\pi_{M}(\Omega_{\mathrm{c}}^{\epsilon})$, is at most
$$h_T\ll_{\eta}\lceil T\rceil|\log\epsilon|^{3}e^{(\delta(\Gamma_0)-\frac{2\delta(\Gamma_0)-r_{\max}(\Gamma_0)}{2}(1-\beta)+\frac{2\log C}{|\log\epsilon|}+\frac{3\log|\log\epsilon|}{|\log\epsilon|})\lceil T\rceil}$$
which is a relatively small amount.

Let $f$ be as in the statement of Theorem \ref{not all of the mass escapes in covers}.
Let $\tilde f$, $\mu_T$ and $\nu^{\prime}_T$ be as in the proof of Corollary \ref{equidistribution in the covering}. We want to prove that $$\liminf_{T\to\infty}\int f d\mu_T>0,$$ i.e.\ $$\liminf_{T\to\infty}\int \tilde{f} d\nu^{\prime}_T>0.$$
Recall that $\nu^{\prime}_T$ averages on the set $\phi^{\prime}(T)$ which is of size $N_T$, which by Remark~\ref{remark_about_liminf_phi_prime} satisfies $$\lim_{T\to\infty}\frac{1}{T}\log N_T\geq\delta(\Gamma).$$
Then
$$\nu^{\prime}_{T}(\pi_{M}(\Omega_{\mathrm{nc}}^{\epsilon}))\geq \frac{N_T-h_T}{N_T}\beta$$
and so, by the choice of $\beta$ and $\epsilon$ in Equation \eqref{choice_of_beta},
$$\liminf_{T\to\infty}\nu^{\prime}_{T}(\pi_{M}(\Omega_{\mathrm{nc}}^\epsilon))\geq \beta.$$
Let $m=\min_{x\in \overline{\pi_{M}(\Omega_{\mathrm{nc}}^{\epsilon}})}\tilde{f}(x)>0$. We obtain $$\liminf_{T\to\infty}\int \tilde{f} d\nu^{\prime}_T\geq \liminf_{T\to\infty}(m\cdot\nu^{\prime}_T(\pi_{M}(\Omega_{\mathrm{nc}}^{\epsilon})))\geq m\beta>0$$
as desired.\end{proof}

\bibliographystyle{plain}
\bibliography{Excursions_v2}

\end{document}